\newtheorem{thm}{Theorem}[section]
\newtheorem{cor}[thm]{Corollary}
\newtheorem{lem}[thm]{Lemma}
\newtheorem*{theorem*}{Theorem}
\newtheorem{prop}[thm]{Proposition}  
\newtheorem{thmalpha}{Theorem}
\numberwithin{equation}{section}
\def\Q{{\mathbb Q}}
\def\Z{{\mathbb Z}}
\def\vec#1{{\bf #1}}
\def\e{\textrm{e}}
\def\pitem{\advance\leftskip3mm\advance\linewidth-3mm}
\def\mitem{\advance\leftskip-3mm\advance\linewidth3mm}
\gdef\SetFigFont#1#2#3#4#5{
  \reset@font\fontsize{#1}{#2pt}
  \fontfamily{#3}\fontseries{#4}\fontshape{#5}
  \selectfont}
\renewcommand{\subjclass}[1]{\thanks{\emph{2010 Mathematics Subject Classification:}~#1}}
\renewcommand{\keywords}[1]{\thanks{\emph{Keywords and Phrases:}~#1}}
\renewcommand{\date}{\thanks{\today}}
\newcommand{\av}{{\bf a}}
\newcommand{\bv}{{\bf b}}
\newcommand{\uv}{{\bf u}}
\newcommand{\xv}{{\bf x}}
\newcommand{\yv}{{\bf y}}
\newcommand{\CC}{\mathcal{C}}
\newcommand{\GG}{\mathcal{G}}
\newcommand{\II}{\mathcal{I}}
\newcommand{\JJ}{\mathcal{J}}
\newcommand{\LL}{\mathcal{L}}
\newcommand{\MM}{\mathcal{M}}
\newcommand{\NN}{\mathcal{N}}
\newcommand{\PP}{\mathcal{P}}
\newcommand{\fp}{\mathfrak{p}}
\newcommand{\fa}{\mathfrak{a}}
\newcommand{\Cc}{\mathbb{C}}
\newcommand{\Rr}{\mathbb{R}}
\newcommand{\Qq}{\mathbb{Q}}
\newcommand{\Zz}{\mathbb{Z}}
\renewcommand{\ge}{\geq}
\renewcommand{\le}{\leq}
\def\house#1{\setbox1=\hbox{$\,#1\,$}%
\dimen1=\ht1 \advance\dimen1 by 2pt \dimen2=\dp1 \advance\dimen2
by 2pt
\setbox1=\hbox{\vrule height\dimen1 depth\dimen2\box1\vrule}%
\setbox1=\vbox{\hrule\box1}%
\advance\dimen1 by .4pt \ht1=\dimen1 \advance\dimen2 by .4pt
\dp1=\dimen2 \box1\relax}
\newcommand{\propersubset}
{\,\mbox{{\raisebox{-1ex}
{$\stackrel{\textstyle{\subset}}{\scriptscriptstyle{\not=}}$}}}\,}
\newcommand{\kdots}{,\ldots ,}
\newcommand{\half}{\mbox{$\textstyle{\frac{1}{2}}$}}
\newcommand{\medfrac}[2]{\mbox{\large{$\textstyle{\frac{#1}{#2}}$}}}
\renewcommand{\mod}[3]{#1\equiv#2\,({\rm mod}\,#3)}
\newcommand{\nmod}[3]{#1\not\equiv#2\,({\rm mod}\,#3)}
\renewcommand{\gcd}{{\rm gcd}}
\newcommand{\lcm}{{\rm lcm}}
\newcommand{\galk}{{\rm Gal}(K/\mathbb{Q})}
\newcommand{\zmprim}{\mathbb{Z}^m_{{\rm prim}}}
\newcommand{\zprim}{\mathbb{Z}^2_{{\rm prim}}}
\newcommand{\rank}{{\rm rank}\,}
\newcommand{\rankD}{{\rm rank}_D\,}
\newcommand{\Si}{S\cup\{\infty\}}
\title[S-parts of values of polynomials]
{S-parts of values of univariate polynomials, binary forms and decomposable forms at integral points}
\subjclass{11D45,11D57,11D59,11J86,11J87} 
\keywords{S-part, polynomials, binary forms, decomposable forms, Subspace Theorem, Baker theory}
\date
\author[Y. Bugeaud]{Yann Bugeaud}
\address{Y. Bugeaud \newline
         \indent IRMA U.M.R. 7501, Universit\'{e} de Strasbourg et CNRS, \newline
         \indent 7, rue Ren\'{e} Descartes, F-67084 Strasbourg cedex, France}
\email{bugeaud\char'100math.unistra.fr}
\author[J.-H. Evertse]{Jan-Hendrik Evertse}
\address{J.-H. Evertse \newline
         \indent Universiteit Leiden, Mathematisch Instituut, \newline
         \indent Postbus 9512, 2300 RA Leiden, The Netherlands}
\email{evertse\char'100math.leidenuniv.nl}
\author[K. Gy\H{o}ry]{K\'{a}lm\'{a}n Gy\H{o}ry}
\address{K. Gy\H{o}ry \newline
         \indent Institute of Mathematics, University of Debrecen \newline
         \indent H-4002 Debrecen, P.O. Box 400, Hungary}
\email{gyory\char'100science.unideb.hu}
\begin{document}

\maketitle

\vspace{-0.25cm}
\begin{center}
\large{\emph{To Robert Tijdeman on his 75-th birthday}}
\end{center}

\section{Introduction}\label{sec0}

Let $S = \{p_1, \ldots , p_s\}$ be a finite, non-empty set of distinct prime numbers.
For a non-zero integer $m$, write $m = p_1^{a_1} \ldots p_s^{a_s} b$, where 
$a_1, \ldots , a_s$ are non-negative integers and $b$ is an integer 
relatively prime to $p_1 \cdots p_s$. Then we define the $S$-part $[m]_S$ 
of $m$ by 
$$
[m]_S := p_1^{a_1} \ldots p_s^{a_s}.
$$
The motivation of the present paper was given by
the following result, established in 2013 by Gross and Vincent \cite{grovin}.

\begin{thmalpha}\label{A}
Let $f(X)$ be a polynomial with integral coefficients
with at least two distinct roots 
and $S$ a finite, non-empty set of prime numbers. 
Then there exist effectively computable positive numbers $\kappa_1$ and $\kappa_2$,
depending only on $f(X)$ and $S$, such that for every non-zero integer $x$ that is not 
a root of $f(X)$ we have
$$
[f(x)]_S < \kappa_2 |f(x)|^{1 - \kappa_1}.
$$
\end{thmalpha}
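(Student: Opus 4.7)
The plan is to reduce Theorem~A to an effective $S$-unit equation in the splitting field of $f$ and then invoke Yu's $p$-adic analogue of Baker's theorem on linear forms in logarithms.

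\smallskip

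\textbf{Localisation via pigeonhole.} Write $f(X)=a_0\prod_{i=1}^{d}(X-\alpha_i)$ over $\overline{\mathbb Q}$, let $K=\mathbb Q(\alpha_1,\ldots,\alpha_d)$ be the splitting field, and let $T_f$ denote the set of finite places of $K$ lying above the primes in $S$. With the standard normalisations of absolute values on $K$ one has
$$\log[f(x)]_S \;=\; \sum_{v\in T_f}\sum_{i=1}^{d}\bigl(-\log|x-\alpha_i|_v\bigr)+O_{f,S}(1),$$
the error absorbing the contribution of $a_0$. By the pigeonhole principle there exist a place $v_0\in T_f$ above some $p_0\in S$ and a root $\alpha:=\alpha_{i_0}$ of $f$ with
$$-\log|x-\alpha|_{v_0} \;\geq\; c_1\log[f(x)]_S - c_2,$$
where $c_1,c_2>0$ are effective and depend only on $f$ and $S$.

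\smallskip

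\textbf{Packaging as a unit equation.} Here the hypothesis of at least two distinct roots is used: pick $\beta\ne\alpha$ among the remaining roots of $f$ and exploit the identity
$$\frac{x-\beta}{\alpha-\beta}\;-\;\frac{x-\alpha}{\alpha-\beta}\;=\;1.$$
The principal ideals $(x-\alpha)$ and $(x-\beta)$ in $\mathcal O_K$ each factor as $\mathfrak a_j\mathfrak b_j$ with $\mathfrak a_j$ supported on $T_f$ and $\mathfrak b_j$ on its complement. Using the finiteness of the $T$-class group of $K$ one may replace $\mathfrak b_j$ by a generator drawn from a finite, effectively computable set, turning the identity into a genuine two-term unit equation $\lambda_1\varepsilon_1-\lambda_2\varepsilon_2=1$ in which $\lambda_1,\lambda_2$ have effectively bounded height and $\varepsilon_1,\varepsilon_2\in\mathcal O_{K,T}^{*}$.

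\smallskip

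\textbf{Baker/Yu and conclusion.} Applying Yu's $p$-adic Baker theorem to $-\log|\lambda_2\varepsilon_2|_{v_0}=-\log|1-\lambda_1\varepsilon_1|_{v_0}$ yields an effective upper bound of the form
$$-\log|x-\alpha|_{v_0}\;\leq\;\kappa_3\log\max(|x|,2)+c_4,$$
with $\kappa_3$ effective and, with care in the choice of the auxiliary parameters, arbitrarily small in comparison with $c_1 d$. Combining this with the pigeonhole estimate and the trivial identity $\log|f(x)|=d\log|x|+O_{f}(1)$ gives
$$\log[f(x)]_S \;\leq\; \frac{\kappa_3}{c_1}\log|x|+O_{f,S}(1) \;\leq\; (1-\kappa_1)\log|f(x)|+O_{f,S}(1)$$
for a suitable $\kappa_1\in(0,1)$; exponentiating and absorbing the constant into $\kappa_2$ yields Theorem~A.

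\smallskip

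The main obstacle is the middle step of turning the tautology above into an honest unit equation with effective control on all constants: this requires tracking representatives of the $T$-class group of $K$ and a fundamental system of $T$-units, so that the resulting $\lambda_j$ lie in a finite set whose height is bounded in terms of $f$ and $S$. Once this is in place, the Baker/Yu input, together with the fact that $|\alpha-\beta|_{v_0}$ is a fixed nonzero quantity, is entirely standard.
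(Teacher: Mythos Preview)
Your overall architecture --- Siegel identity, $S$-unit equation, Baker/Yu --- is the right one and is essentially what the paper does (there via the binary form $Y^{1+\deg f}f(X/Y)$ and a black-box appeal to the Gy\H{o}ry--Yu bounds for decomposable form equations). But the packaging step is wrong as written, and this error forces the unjustified claim in your final step.

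You assert that after splitting $(x-\alpha)=\mathfrak a_1\mathfrak b_1$ with $\mathfrak a_1$ supported on $T_f$ and $\mathfrak b_1$ on the complement, the finiteness of the $T$-class group lets you absorb $\mathfrak b_1$ into a $\lambda_1$ of \emph{bounded} height. It does not: only the ideal \emph{class} of $\mathfrak b_1$ lies in a finite set, while $N(\mathfrak b_1)$ --- and hence the height of any generator --- is unbounded. (Take $f(X)=X(X-1)$, $S=\{2\}$, $x=2^k+1$: here $x$ is odd, so $\mathfrak b_1=(2^k+1)$.) The missing ingredient is the full factorisation of $f$, not just two roots: since $\prod_i(x-\alpha_i)$ has $S$-free part $b:=f(x)/[f(x)]_S$ up to the leading coefficient, and since the ideals $(x-\alpha_i)$ are pairwise nearly coprime (their gcd divides a fixed ideal depending only on $f$), each $N(\mathfrak b_j)$ divides $|b|$ times a bounded factor. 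Thus $h(\lambda_j)\ll\log|b|+1$, not $O(1)$. Feeding coefficients of this height into the effective $S$-unit bound (Gy\H{o}ry--Yu, which already packages Baker and Yu) yields $\log|x|\le C(\log|b|+1)$ with an effective $C=C(f,S)$; hence $|f(x)|\ll|b|^{C\deg f}$ and
\[
[f(x)]_S=|f(x)|/|b|\ \ll_{f,S}\ |f(x)|^{\,1-1/(C\deg f)}.
\]
No smallness of the Baker/Yu constant is needed, so your claim that $\kappa_3$ can be made ``arbitrarily small in comparison with $c_1 d$'' --- which is false in any case, the constants in Yu's theorem being fixed once $K$ and $T_f$ are --- becomes irrelevant. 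The point is to bound $|x|$ in terms of the $S$-free part $|b|$, not to bound a single $p$-adic valuation in terms of $|x|$; this is precisely the route the paper takes in its proof of the effective decomposable form theorem.
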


Gross and Vincent's proof of Theorem \ref{A} depends on the theory of linear forms in complex logarithms,
Under the additional hypotheses that $f(X)$ has  
degree $n\geq 2$ and no multiple roots,
we deduce an ineffective analogue of Theorem \ref{A}, with instead of $1-\kappa_1$ an
exponent $\medfrac{1}{n}+\epsilon$ for every $\epsilon >0$ and instead
of $\kappa_2$ an ineffective number depending on $f(X)$,
$S$ and $\epsilon$. 
This is in fact an easy application of the $p$-adic Thue-Siegel-Roth 
Theorem.
We show that the exponent
$\medfrac{1}{n}$ is best possible. Lastly, we give an estimate
for the density of the set of integers $x$ for which $[f(x)]_S$
is large, i.e., for every small $\epsilon >0$ we estimate in terms of $B$
the number of integers $x$ with $|x|\leq B$
such that $[f(x)]_S\geq |f(x)|^{\epsilon}$.

We considerably extend both Theorem \ref{A}, its ineffective analogue,
and the density result
by proving similar results 
for the $S$-parts of values of homogeneous binary forms and, more generally, of 
values of decomposable forms at integer points, under suitable assumptions. In addition, in the effective results
we give an expression for $\kappa_1$, which is explicit in terms of $S$. 
For our extensions to binary forms and decomposable forms,
we use the $p$-adic Thue-Siegel-Roth Theorem
and the $p$-adic Subspace Theorem of Schmidt and Schlickewei for the ineffective estimates for the $S$-part. 
The proof of the effective estimates is based on
an effective theorem of Gy\H{o}ry and Yu \cite{gyyu} 
on decomposable form equations 
whose proof depends on estimates for linear 
forms in complex and in $p$-adic logarithms. Lastly,
the proofs of our density results on the number of integer points
of norm at most $B$ at which the value of the binary form or
decomposable form under consideration has large $S$-value are
based on a recent general lattice point counting result
of Barroero and Widmer \cite{BarWid14} and on work in the 
PhD-thesis of Junjiang Liu \cite{liu15}.  

For simplicity, we have restricted ourselves to univariate
polynomials, binary forms and decomposable forms with coefficients in $\Zz$. 
With some extra technical effort, analogous results could have been
obtained over arbitrary number fields.

In Section \ref{sec1} we state our results, in Sections \ref{section3}--\ref{section6} we give the proofs,
in Sections \ref{section7} and \ref{section8} we present some applications,
and in Section \ref{section-last} we give some additional comments
on Theorem \ref{A}. 

\section{Results}\label{sec1}

\subsection{Results for univariate polynomials and binary forms}\label{sec1a}

We use notation $\ll_{a,b,\ldots}$, $\gg_{a,b,\ldots}$ to indicate
that the constants implied by the Vinogradov symbols depend only
on the parameters $a,b,\ldots$ .
Further, we use the notation $A\asymp_{a,b,\ldots}B$ to denote 
$A\ll_{a,b,\ldots} B$ and $B\ll_{a,b,\ldots} A$.
We prove the following ineffective analogue of
Theorem \ref{A} mentioned in the previous section.

\begin{thm}\label{thm1a.1}
Let $f(X)\in\Zz [X]$ be a polynomial of degree $n\geq 2$ without multiple
zeros.
\begin{itemize}
\item[(i)] Let $S=\{ p_1\kdots p_s\}$ be a non-empty set of primes.
Then for every $\epsilon >0$ and
for every $x\in\Zz$ with $f(x)\not= 0$,
\[
[f(x)]_S\ll_{f,S,\epsilon} |f(x)|^{(1/n)+\epsilon}. 
\]
\item[(ii)] There are infinitely many primes $p$, and for every of these
$p$, there are infinitely many integers $x$, such that
$f(x)\not= 0$ and 
\[
[f(x)]_{\{ p\}}\gg_f |f(x)|^{1/n}.
\]
\end{itemize}
\end{thm}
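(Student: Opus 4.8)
For part (i), the plan is to reduce the problem to the $p$-adic Thue--Siegel--Roth theorem applied at each prime of $S$ simultaneously (i.e., the $p$-adic Roth theorem of Ridout, or equivalently Roth's theorem over number fields with finitely many valuations). Write $f(X)=a_0\prod_{i=1}^{r}(X-\al_i)^{e_i}$ over $\overline{\Qq}$, where the $\al_i$ are distinct and $e_i=1$ by the no-multiple-zeros hypothesis, so $r=n$. For a prime $p\in S$ and a fixed extension of $\ordp$ to $\Qq(\al_1\kdots\al_n)$, the key point is that for any $x\in\Zz$ the quantity $\min(\ordp(x-\al_i),\ordp(x-\al_j))$ is bounded in terms of $p$ and $\diff$ of the $\al_i$ (two distinct roots cannot both be very $p$-adically close to the same integer $x$), so essentially all of the $p$-adic valuation of $f(x)$ comes from a single factor $x-\al_{i(p,x)}$. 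Thus, up to a bounded multiplicative constant, $[f(x)]_S\ll_{f,S}\prod_{p\in S}p^{\ordp(x-\al_{i(p,x)})}$. If $[f(x)]_S$ were $\gg |f(x)|^{(1/n)+\epsilon}\gg |x|^{1+n\epsilon'}$ for infinitely many $x$, then for some fixed choice of the roots $\al_{i(p)}$ (finitely many choices), infinitely many $x$ would satisfy $\prod_{p\in S}|x-\al_{i(p)}|_p\ll |x|^{-1-\delta}$ with $|x-\al_{i(p)}|_p$ the normalized $p$-adic absolute value; combined with the trivial archimedean estimate $|x-\al_{i(p)}|_\infty\ll|x|$, this contradicts the $p$-adic Roth theorem (over the number field $\Qq(\al_i)$, product over the finitely many places above $S\cup\{\infty\}$), since the product of these quantities over all the relevant places would decay like $H(x)^{-2-\delta}$, which is impossible. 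The ineffectivity of the constant is inherited from Roth's theorem. The main technical care is bookkeeping the finitely many choices of the nearest root at each prime, and translating the $S$-part statement into the appropriate product of normalized absolute values over places of the splitting field.

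For part (ii), the plan is a direct construction. Fix a root $\al$ of $f$; since $f$ has no multiple zeros, $f'(\al)\neq 0$. By Hensel's lemma, for any prime $p$ at which $\al$ is $p$-integral and $p\nmid f'(\al)$ — and there are infinitely many such $p$, e.g. all sufficiently large primes that split appropriately, or more simply all primes $p$ for which $f$ has a simple root mod $p$, which by Chebotarev (or even an elementary counting argument using that $f$ has only finitely many multiple factors mod $p$) is an infinite set — there is a $p$-adic integer $\al_p\in\Zz_p$ with $f(\al_p)=0$. For each $k\geq 1$ choose $x_k\in\Zz$ with $\mod{x_k}{\al_p}{p^k}$, i.e. $\ordp(x_k-\al_p)\geq k$; one may further arrange $|x_k|\leq p^k$ and $x_k$ not a root of $f$ and $x_k\to\infty$. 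Then $\ordp(f(x_k))=\ordp(x_k-\al_p)+O_f(1)\geq k+O_f(1)$, while $|f(x_k)|\ll_f |x_k|^{n}\ll_f p^{nk}$, so $[f(x_k)]_{\{p\}}=p^{\ordp(f(x_k))}\gg_f p^{k}\gg_f |f(x_k)|^{1/n}$. This establishes the lower bound with the exponent $1/n$, showing part (i) is sharp.

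The main obstacle is part (i): one must be careful that the reduction to Roth's theorem genuinely uses only \emph{two} archimedean-type inputs per place so that the exponent comes out as $2$ (the Roth exponent) rather than something weaker, and one must handle uniformly the dependence on which root is $p$-adically (resp. archimedeanly) closest to $x$. A clean way to organize this is to note that for each $x$ the dominant contributions pick out one root, and the product over the $\le s+1$ relevant places of the normalized absolute value of $x-\al$ (for the single winning $\al$, after passing to a subsequence on which the winner is constant) is $\gg H_K(x)^{-(\#\text{places})}$ by the product formula reasoning, whence Roth forces the exponent. I expect the archimedean-side estimate and the "one root dominates" lemma at each $p$ to be the steps requiring the most attention, though both are standard.
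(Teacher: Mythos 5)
Part (ii) of your argument is essentially the paper's proof and is fine: pick a prime $p$ not dividing the leading coefficient or the discriminant of $f$ for which $f$ has a root mod $p$, lift it by Hensel to $\alpha_p\in\Zz_p$, and take integers $x_k$ with $\ordp(x_k-\alpha_p)\geq k$ and $|x_k|\asymp p^k$.

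Part (i) has a genuine gap, and it sits exactly in the step you flagged as needing the most care. Applying the $p$-adic Thue--Siegel--Roth theorem with a target root $\alpha_{i(p)}$ at each $p\in S$ and some finite root $\alpha_\infty$ of $f$ at the archimedean place gives
\[
\prod_{p\in S\cup\{\infty\}}\min\bigl(1,|x-\alpha_{i(p)}|_p\bigr)\gg_{\epsilon}|x|^{-2-\epsilon}.
\]
But for an integer $x$ with $|x|$ large, the archimedean factor $\min(1,|x-\alpha_\infty|_\infty)$ equals $1$, since $|x-\alpha_\infty|\gg 1$; your estimate $|x-\alpha_{i(p)}|_\infty\ll|x|$ is an upper bound by something large, not a small contribution. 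Hence Roth only forbids $\prod_{p\in S}|x-\alpha_{i(p)}|_p\ll|x|^{-2-\epsilon}$, which is perfectly compatible with $\prod_{p\in S}|x-\alpha_{i(p)}|_p\asymp|x|^{-1-\delta}$; the asserted decay like $H(x)^{-2-\delta}$ does not follow. With this reasoning you would only obtain $[f(x)]_S\ll_{f,S,\epsilon}|f(x)|^{(2/n)+\epsilon}$, twice the required exponent.

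The missing ingredient is that one must force the archimedean place to contribute a factor of order $|x|^{-1}$. The paper does this by passing to the binary form $F(X,Y)=Y^{n+1}f(X/Y)$ of degree $n+1$: the extra linear factor $Y$ is a ``root at infinity.'' Applying Proposition~\ref{prop:3.1} (the binary-form Roth bound $|F(x,y)|/[F(x,y)]_S\gg_{F,S,\epsilon}\max(|x|,|y|)^{\deg F-2-\epsilon}$ for $(x,y)\in\zprim$) to this $F$ at $(x,1)$ gives $|f(x)|/[f(x)]_S\gg|x|^{(n+1)-2-\epsilon}=|x|^{n-1-\epsilon}$, and since $|f(x)|\asymp|x|^n$ this yields exponent $(1/n)+\epsilon'$. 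The point is that $(x,1)$ is projectively close to the root $[1:0]$ of $F$, so the archimedean factor in the Roth product is $\asymp 1/|x|$ rather than $1$; the total lower bound $\max(|x|,1)^{-2-\epsilon}$ then forces the $S$-adic part alone to be $\gg|x|^{-1-\epsilon}$. Equivalently, you could rerun your argument on $\Pp^1$ with the point at infinity as the archimedean target. Either way this extra degree is precisely what improves $2/n$ to $1/n$, and part (ii) confirms that $1/n$ is sharp whereas $2/n$ is not.
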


For completeness, we give here
also a more precise effective version of Theorem \ref{A}, 
which is a consequence of Theorem \ref{thm1b.3} stated below
on the $S$-parts of values of
binary forms.

\begin{thm}\label{thm1a.3}
Let $f(X)\in\Zz [X]$ be a polynomial
with at least two distinct roots and suppose that its splitting field
has degree $d$ over $\Qq$. Further, let
$S=\{ p_1\kdots p_s\}$ be a non-empty set of primes and put $P:=\max (p_1\kdots p_s)$. Then for every integer $x$ with $f(x)\not= 0$ we have
\[
[f(x)]_S\leq \kappa_2|f(x)|^{1-\kappa_1},
\]
where
\[
\kappa_1=\Big(c_1^s\big(P(\log p_1)\cdots (\log p_s)\big)^d\Big)^{-1},
\]
and $c_1,\kappa_2$ are effectively computable positive numbers that depend
only on $f(X)$.
\end{thm}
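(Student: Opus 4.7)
The strategy is to derive Theorem~\ref{thm1a.3} directly from the binary form result, Theorem~\ref{thm1b.3}, by homogenization. Given $f(X)\in\Zz[X]$ of degree $n\geq 2$ with at least two distinct roots, I form the associated binary form
\[
F(X,Y):=Y^n f(X/Y)\in\Zz[X,Y].
\]
This is a binary form of degree $n$ whose splitting field over $\Qq$ coincides with that of $f$ (hence has degree $d$), and which factors over that splitting field with at least two pairwise non-proportional linear factors, since the distinct roots of $f$ correspond to distinct linear factors $X-\alpha_j Y$ of $F$. For every integer $x$ with $f(x)\neq 0$ one has $F(x,1)=f(x)$, and therefore
\[
[f(x)]_S=[F(x,1)]_S.
\]
Applying Theorem~\ref{thm1b.3} to $F$ at the pair $(x,1)\in\Zz^2$ then yields the desired inequality with precisely the shape of $\kappa_1$ and $\kappa_2$ claimed, because the parameters $s$, $P$, $p_1,\ldots,p_s$ and the degree $d$ of the splitting field all transfer verbatim from the polynomial to its homogenization.

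The first point to confirm is that the non-degeneracy hypothesis of Theorem~\ref{thm1b.3} is implied by the hypothesis ``$f$ has at least two distinct roots''. If Theorem~\ref{thm1b.3} requires only two non-proportional linear factors of $F$ over $\overline{\Qq}$, then this is immediate. If a stronger condition (say three pairwise non-proportional linear factors, as in a Thue--Mahler style setting) is imposed, the exceptional case where $f$ has exactly two distinct roots must be treated separately: writing $f(X)=a(X-\alpha)^{e_1}(X-\beta)^{e_2}$ over its splitting field, the equation $(x-\alpha)-(x-\beta)=\beta-\alpha$ combined with the identity $f(x)=a(x-\alpha)^{e_1}(x-\beta)^{e_2}$ reduces the bound on $[f(x)]_S$ to an effective $S$-unit equation over the ring of $S$-integers in the splitting field, which is handled by the Baker--Gy\H{o}ry--Yu theory of linear forms in complex and $p$-adic logarithms with the same $S$-dependence.

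The main obstacle, and the reason the statement has this explicit shape, is tracking the precise dependence of $\kappa_1$ on $s$, $P$, $p_1,\ldots,p_s$ and $d$ through the reduction. This dependence originates in the effective theorem of Gy\H{o}ry and Yu \cite{gyyu} on decomposable form equations, which feeds Theorem~\ref{thm1b.3}: it produces factors of the form $P(\log p_1)\cdots(\log p_s)$ with exponent controlled by the degree $d$ of the splitting field, together with a factor $c_1^s$ absorbing the per-prime Baker-type constants. Since homogenization neither enlarges the splitting field nor introduces additional primes, the exponent $\kappa_1$ coming from Theorem~\ref{thm1b.3} applied to $F$ transfers unchanged to $f$, while the additive-multiplicative error from replacing $\max(|x|,|1|)$ by $|f(x)|^{1/n}$ (up to constants depending only on $f$) is absorbed into the effective constant $\kappa_2$.
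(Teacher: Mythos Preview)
Your approach is essentially the paper's: homogenize and apply Theorem~\ref{thm1b.3} at the point $(x,1)\in\zprim$. The one substantive difference is the choice of homogenization. You take $F(X,Y)=Y^{\deg f}f(X/Y)$, but Theorem~\ref{thm1b.3} genuinely requires degree $\ge 3$ and at least \emph{three} pairwise non-proportional linear factors over the splitting field; when $f$ has exactly two distinct roots (in particular when $\deg f=2$) your $F$ fails this, which is why you are forced into a separate $S$-unit equation argument for that case. The paper avoids this detour entirely by taking instead
\[
F(X,Y):=Y^{\,1+\deg f}f(X/Y).
\]
The extra factor $Y$ guarantees $\deg F\ge 3$ and supplies a third linear factor $Y$ non-proportional to any $X-\alpha Y$, so the hypotheses of Theorem~\ref{thm1b.3} are met in all cases. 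Since $F(x,1)=f(x)$ and the splitting field of $F$ is still that of $f$ (the added factor $Y$ is defined over $\Qq$), the degree $d$ and hence the shape of $\kappa_1$ are unchanged. With this adjustment your proof collapses to the paper's one-line reduction; the separate two-root analysis becomes unnecessary.
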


For variations on this result, and related results,
we refer to Section \ref{section-last}. 

For polynomials $X(X+1)$ and $X^2 + 7$ and special sets $S$, Bennett, Filaseta,     
and Trifonov \cite{BeFiTr08,BeFiTr09} have obtained stronger effective results.     

As is to be expected, for most integers $x$, the $S$-part $[f(x)]_S$ is small
compared with $|f(x)|$. This is made more precise in the following result.
For any finite set of primes $S$ and any $\epsilon >0$, $B>0$,
we denote by $N(f,S,\epsilon ,B)$ the number of integers $x$ such that
\begin{equation}\label{1a.1}
|x|\leq B,\ \ f(x)\not=0,\ \  [f(x)]_S\geq |f(x)|^{\epsilon}.
\end{equation}
Denote by $D(f)$ the discriminant of $f$ and for a prime $p$,
denote by $g_p$ the largest integer $g$ such that
$p^{g}$ divides $D(f)$.

\begin{thm}\label{thm1a.2}
Let $f(X)\in\Zz [X]$ be a polynomial of degree $n\geq 2$ with
non-zero discriminant. Further, let
$0<\epsilon <1/n$, and let $S$ be a finite set of primes.
Denote by $s'$ the number of primes $p\in S$ such that
$\mod{f(x)}{0}{p^{g_p +1}}$ is solvable and assume that this
number is positive. 
Then
\[
N(f,S,\epsilon ,B)\asymp_{f,S,\epsilon} B^{1-n\epsilon}(\log B)^{s'-1}\ \ \mbox{as } B\to\infty.
\]
\end{thm}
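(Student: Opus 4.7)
The plan is as follows. Since $|f(x)|\asymp_f|x|^n$ for $|x|$ large, the condition $[f(x)]_S \geq |f(x)|^\epsilon$ becomes, up to an additive constant depending only on $f$, the inequality $\sum_{p \in S} v_p(f(x)) \log p \geq n\epsilon \log |x|$, where $v_p$ denotes the $p$-adic valuation. Both bounds on $N(f,S,\epsilon,B)$ will come from an explicit asymptotic count of integers $x\in [-B,B]$ satisfying this inequality.

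First I would dispose of the primes $p \in S$ that do not contribute. If $f(x) \equiv 0 \pmod{p^{g_p+1}}$ is unsolvable, then by a refined Hensel's lemma one checks that $v_p(f(x))$ is bounded solely in terms of $f$ and $p$ for every $x \in \Zz$ with $f(x) \neq 0$. Such primes contribute only a bounded factor to $[f(x)]_S$, so I may restrict attention to the subset $S' \subseteq S$ consisting of the $s'$ good primes, for each of which $f$ has a nonempty finite set of simple roots $\alpha_1,\ldots,\alpha_r$ in $\Zz_p$. Taylor-expanding gives $v_p(f(x)) = v_p(f'(\alpha_i)) + v_p(x - \alpha_i)$ for $x$ $p$-adically close to $\alpha_i$, from which, for $k$ above a threshold $k_0(p)$, the set $\{x \in \Zz_p : v_p(f(x)) \geq k\}$ is a disjoint union of $C_p := \sum_{i=1}^r p^{v_p(f'(\alpha_i))}$ residue classes modulo $p^k$.

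The Chinese Remainder Theorem then yields, for each tuple $\kv = (k_p)_{p \in S'}$ with every $k_p \geq k_0(p)$,
\begin{equation*}
\#\{x \in [-B, B] : v_p(f(x)) \geq k_p\ \forall p \in S'\} = \frac{2B \prod_{p\in S'}C_p}{\prod_{p\in S'} p^{k_p}} + O_f\!\left(\prod_{p\in S'}C_p\right),
\end{equation*}
and an inclusion--exclusion converts this into a count with each ``$\geq k_p$'' replaced by equality. Summing over tuples $\kv$ satisfying $\sum_{p \in S'} k_p \log p \geq n\epsilon \log B - O_f(1)$, and substituting $y_p := k_p \log p$, the main term becomes $B$ times a sum $\sum_{\yv} \exp(-\sum_p y_p)$ over lattice points $\yv$ in a half-space $\{\sum_p y_p \geq n\epsilon \log B + O(1)\}$. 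Since the number of lattice tuples $\yv$ with $\sum_p y_p$ in an interval of length one near the level $t$ grows like $t^{s'-1}$, a routine geometric-sum estimate gives $\sum_\yv \exp(-\sum_p y_p) \asymp (n\epsilon\log B)^{s'-1} B^{-n\epsilon}$; multiplying by $B$ produces the claimed asymptotic $\asymp_{f,S,\epsilon} B^{1-n\epsilon}(\log B)^{s'-1}$. The Barroero--Widmer lattice-point counting theorem referenced in the introduction supplies the uniform error control needed to make this rigorous.

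The main obstacle, in my view, is controlling the cumulative error across the many tuples $\kv$ so that it remains strictly below the main term, and especially handling the boundary tuples in which some $k_p$ lies below the threshold $k_0(p)$. These boundary contributions must be matched by a separate but parallel argument, and the lower bound $\gg$ requires in addition that every density constant $C_p$ be strictly positive for $p\in S'$, which is precisely what the solvability of $f(x)\equiv 0\pmod{p^{g_p+1}}$ ensures.
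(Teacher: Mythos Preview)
Your overall strategy---partition by the tuple of $p$-adic valuations, use the Hensel/Taylor structure to count residue classes, and sum---is essentially the paper's approach. The paper phrases it as a partition by $h=[f(x)]_S$ into sets $\mathcal{N}_h$, quotes Stewart's result (Lemma~\ref{lem:3.2}) for the constancy of the residue-class count once $k>g_p$, and sums via a dyadic decomposition and Lemma~\ref{lem:3.3}; your Hensel analysis rederives Stewart's lemma and your geometric sum plays the role of Lemma~\ref{lem:3.3}.

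There is, however, a genuine gap in the upper bound. Your summation is over tuples $\kv$ with $\sum_{p}k_p\log p\ge n\epsilon\log B-O_f(1)$, i.e., you are counting integers $x\in[-B,B]$ with $[f(x)]_{S'}\gg B^{n\epsilon}$. Since $|f(x)|^\epsilon\ll B^{n\epsilon}$, this set is \emph{contained} in the set defining $N(f,S,\epsilon,B)$, so your computation yields only the lower bound. For the upper bound you must also control the integers $x$ with $[f(x)]_S\ge|f(x)|^\epsilon$ but $[f(x)]_S\ll B^{n\epsilon}$; for these the constraint $|f(x)|\le h^{1/\epsilon}$ forces $|x|\ll h^{1/(n\epsilon)}$ rather than $|x|\le B$, and a separate sum over this ``small $h$'' regime is needed. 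The paper handles this by working with the set $V_f(B,h^{1/\epsilon})=\{x:|x|\le B,\,|f(x)|\le h^{1/\epsilon}\}$ and noting that its measure is $2B$ when $h\ge(c_fB^n)^{\epsilon}$ and $\ll h^{1/(n\epsilon)}$ otherwise; the second regime contributes at the same order $B^{1-n\epsilon}(\log B)^{s'-1}$ (it is not negligible), and the hypothesis $\epsilon<1/n$ is exactly what makes the integral $\int_0^{n\epsilon\log B}t^{s'-1}e^{t(1/(n\epsilon)-1)}\,dt$ converge to the right size. Your ``main obstacle'' paragraph does not flag this issue.

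A minor point: invoking Barroero--Widmer here is unnecessary and slightly misleading. The paper explicitly remarks that for the one-variable case an elementary count (the number of integers in an interval in a given residue class mod $h$ equals the length over $h$ plus $O(1)$) replaces Barroero--Widmer; the latter is used only for the binary-form Theorem~\ref{thm1b.2}.
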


\noindent
{\bf Remarks.}
\\
{\bf 1.} 
If $s'=0$ then $[f(x)]_S$ is bounded,
and so the set of integers $x$ with $[f(x)]_S\geq |f(x)|^{\epsilon}$
is finite. 
\\[0.15cm]
{\bf 2.} In general, $\lim_{B\to\infty} N(f,S,\epsilon ,B)/B^{1-n\epsilon}(\log B)^{s'-1}$ does not exist. 
\\[0.15cm]
{\bf 3.} There are infinitely many
primes $p$ such that $\mod{f(x)}{0}{p}$ is solvable.
Removing from those the finitely many that divide $D(f)$, there
remain infinitely many primes $p$ such that $g_p=0$ and 
$\mod{f(x)}{0}{p}$ is solvable.
\\

We now formulate some analogues of the above mentioned results for
binary forms. Denote by $\zprim$ the set of pairs $(x,y)\in\Zz^2$ with
$\gcd (x,y)=1$.

\begin{thm}\label{thm1b.1}
Let $F(X,Y)\in\Zz [X,Y]$ be a binary form of degree $n\geq 2$ with non-zero
discriminant.
\begin{itemize}
\item[(i)] Let $S=\{p_1\kdots p_s\}$ be a non-empty set of primes.
Then for every $\epsilon >0$ and every pair $(x,y)\in\zprim$ with $F(x,y)\not= 0$,
\[
[F(x,y)]_S\ll_{F,S,\epsilon} |F(x,y)|^{(2/n)+\epsilon}.
\]
\item[(ii)] There are finite sets of primes $S$ with 
the smallest prime in $S$ arbitrarily large,
and for every of these $S$ infinitely many pairs $(x,y)\in\zprim$, such that
$F(x,y)\not= 0$ and
\[
[F(x,y)]_S\gg_{F,S,\epsilon} |F(x,y)|^{2/n}.
\]
\end{itemize}
\end{thm}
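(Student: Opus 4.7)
The two parts require different techniques: for the upper bound in (i) I rely on the Subspace Theorem of Schmidt--Schlickewei in dimension $2$, applied to the linear factors of $F$ over its splitting field; for the lower bound in (ii) I construct the required pairs explicitly via Minkowski's convex body theorem on a sublattice of $\Zz^{2}$ cut out by a $p$-adic congruence along a root of $F$. Throughout, write $H=H(x,y):=\max(|x|,|y|)$.

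\emph{Part (i).} Factor $F(X,Y)=a\prod_{i=1}^{n}(X-\alpha_{i}Y)$ over the splitting field $K$ of $F$ over $\Qq$. For every $p\in S$, fix a place $v_{p}$ of $K$ above $p$, and for each $v\in S\cup\{\infty\}$ let $\alpha^{(v)}\in\{\alpha_{1},\ldots,\alpha_{n}\}$ be a root minimising $|x-\alpha_{i}y|_{v}$. Because the discriminant of $F$ is nonzero, the $v$-adic separations $|\alpha_{i}-\alpha_{j}|_{v}$ admit a positive lower bound depending only on $F$; combined with the identity $\max(|x|_{v_{p}},|y|_{v_{p}})=1$ for $(x,y)\in\zprim$, the ultrametric inequality gives $|F(x,y)|_{p}\gg_{F}|x-\alpha^{(v_{p})}y|_{v_{p}}$ for every $p\in S$, and consequently
\[
[F(x,y)]_{S}\;\ll_{F}\;\prod_{p\in S}|x-\alpha^{(v_{p})}y|_{v_{p}}^{-1}.
\]
Enumerating the $n^{|S|+1}$ possible tuples $(\alpha^{(v)})_{v}$ and applying the Subspace Theorem separately in each case, we obtain, for all $(x,y)\in\zprim$ outside a finite exceptional set,
\[
|x-\alpha^{(\infty)}y|\cdot\prod_{p\in S}|x-\alpha^{(v_{p})}y|_{v_{p}}\;\gg_{F,S,\epsilon}\;H^{-1-\epsilon}.
\]
An analogous ultrametric argument at the archimedean place yields $|F(x,y)|\gg_{F}H^{n-1}\cdot|x-\alpha^{(\infty)}y|$; combining this with the trivial $|F(x,y)|\leq c_{F}H^{n}$ and the two displays above gives, after elementary manipulations,
\[
[F(x,y)]_{S}\;\ll_{F,S,\epsilon}\;|F(x,y)|\cdot H^{2+\epsilon-n}\;\ll_{F,S,\epsilon}\;|F(x,y)|^{(2+\epsilon)/n},
\]
which is the required inequality after absorbing the factor $1/n$ into $\epsilon$.

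\emph{Part (ii).} Choose an arbitrarily large prime $p$ such that $F(X,1)$ admits a simple root $\alpha_{p}\in\Zz_{p}$ irrational over $\Qq$; infinitely many such $p$ exist by Chebotarev's density theorem together with Hensel's lemma, assuming $F$ has an irreducible factor over $\Qq$ of degree $\geq 2$. Put $S=\{p\}$ and for each integer $k\geq 1$ pick $\tilde{\alpha}_{k}\in\Zz$ with $\tilde{\alpha}_{k}\equiv\alpha_{p}\pmod{p^{k}}$. The sublattice
\[
L_{k}=\{(x,y)\in\Zz^{2}:p^{k}\mid x-\tilde{\alpha}_{k}y\}
\]
of $\Zz^{2}$ has covolume $p^{k}$, so by Minkowski's first theorem it contains a non-zero $(x_{k},y_{k})$ with $H(x_{k},y_{k})\leq C\sqrt{p^{k}}$; dividing by $\gcd(x_{k},y_{k})$ (and discarding the finitely many $k$'s for which the quotient drops $H$ by a power of $p$) we may take $(x_{k},y_{k})\in\zprim$. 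Then $|x_{k}-\alpha_{p}y_{k}|_{p}\leq p^{-k}\ll H(x_{k},y_{k})^{-2}$; arguing as in (i), $|F(x_{k},y_{k})|_{p}\asymp_{F}|x_{k}-\alpha_{p}y_{k}|_{p}$, whence
\[
[F(x_{k},y_{k})]_{\{p\}}\;\gg_{F}\;H(x_{k},y_{k})^{2}\;\gg_{F}\;|F(x_{k},y_{k})|^{2/n},
\]
the last inequality coming from $|F(x_{k},y_{k})|\leq c_{F}H(x_{k},y_{k})^{n}$. Irrationality of $\alpha_{p}$ forces $F(x_{k},y_{k})\neq 0$, and $H(x_{k},y_{k})\to\infty$ with $k$ gives infinitely many distinct pairs. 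The residual case in which $F$ has only linear irreducible factors over $\Qq$ is treated analogously, with $S$ containing one prime per linear factor and the lattice taken as the intersection of the corresponding congruence sublattices.

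\emph{Main obstacle.} The most delicate step in (i) is the passage from the Subspace Theorem (formulated for fixed algebraic targets at each place) to $(x,y)$-dependent targets $\alpha^{(v)}$, requiring the enumeration of finitely many sub-cases and careful bookkeeping of $p$-adic normalisations across the splitting field $K$. In (ii), the corresponding obstacle lies in the case of $F$ with only linear factors over $\Qq$, where several primes must be combined in a joint Minkowski construction.
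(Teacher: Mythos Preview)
Your overall strategy matches the paper's: part~(i) is the $p$-adic Thue--Siegel--Roth theorem (the Subspace Theorem in two variables), and part~(ii) is Minkowski's theorem on a congruence lattice. For~(i) the paper packages the argument as Proposition~\ref{prop:3.1}, bounding $|F(x,y)|/[F(x,y)]_S$ below by $H^{n-2-\epsilon}$; your product-over-places formulation is equivalent, though note that writing $F=a\prod(X-\alpha_iY)$ already presupposes $F(1,0)\neq 0$, which the paper arranges by a preliminary unimodular substitution.

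The genuine gaps are in~(ii). First, your gcd step is misstated: dividing the Minkowski point by $\gcd(x_k,y_k)=p^u$ does \emph{not} leave ``finitely many $k$'s to discard''; the gcd may be a nontrivial power of $p$ for every $k$. What actually survives (and what the paper checks) is that after division one has $|x_k'-\alpha_py_k'|_p\le p^{u-k}$ and $H(x_k',y_k')\le p^{k/2-u}$, so $[F]_{\{p\}}\ge p^{k-u}\ge H^2$ still holds; but then infinitude of the resulting primitive pairs requires a separate contradiction argument via the irrationality of $\alpha_p$, and $F(x_k',y_k')\neq 0$ requires ruling out $x_k'/y_k'=\alpha_i$ for \emph{every} root~$\alpha_i$ (including possible rational ones), which the paper does by choosing $p$ so that all roots lie in $\Zz_p$ with $|\alpha_i-\alpha_j|_p=1$. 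Second, your sketch for the case when $F$ splits over~$\Qq$ does not work as written: a joint Minkowski argument on the intersection of congruence lattices produces a shortest nonzero vector, but nothing prevents that vector from lying on one of the rational lines $X=\beta_iY$, where $F$ vanishes. The paper bypasses this obstruction by taking only two primes $p,q$ and writing down an \emph{explicit} pair $(x,y)$ with $x-\beta_1y=up^k$ and $x-\beta_2y=uq^l$ for a fixed nonzero integer~$u$; this directly forces $F(x,y)\neq 0$ and yields $[F(x,y)]_{\{p,q\}}=p^kq^l\gg H^2$ once $p^k$ and $q^l$ are chosen of comparable size.
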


Our next result is an effective analogue of Theorem \ref{thm1a.3} for binary forms.
It is an easy consequence of Theorem \ref{thm1}
stated below on decomposable forms. The splitting field of a binary form is the smallest
extension of $\Qq$ over which it factors into linear forms.

\begin{thm}\label{thm1b.3}
Let $F(X,Y)$ be a binary form of degree $n\ge 3$ with coefficients in $\Zz$ 
and with splitting field $K$. Suppose that $F$ has at least three pairwise non-proportional linear factors over $K$. 
Let again $S=\{ p_1\kdots p_s\}$ be a finite set of primes
and $[K:\Qq ]=d$.
Then
\[ 
[F(x,y)]_S\le \kappa_4 |F(x, y)|^{1-\kappa_3}       
\]
for every $(x,y)\in\zprim$ with $F(x,y)\not=0$,
where 
$$
\kappa_3= 
\bigl(c_2^s \bigl((P (\log p_1) \cdots (\log p_s) \bigr)^d \bigr)^{-1} 
$$ 
and 
$\kappa_4$, $c_2$  are effectively computable positive numbers, 
depending only on $F$.
\end{thm}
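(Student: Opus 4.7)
The plan is to derive Theorem~\ref{thm1b.3} as the $m=2$ special case of the general effective theorem on $S$-parts of values of decomposable forms (Theorem~1, stated and proved later in the paper), which itself rests on the Győry--Yu theorem on decomposable form equations and thus ultimately on estimates for linear forms in complex and $p$-adic logarithms. The excerpt signals this route explicitly, so the only real work here is to check that the hypotheses transfer cleanly and that the explicit shape of $\kappa_3$ comes out as stated.

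The key verification is the following. I would factor $F = a_0 \prod_{i=1}^n L_i$ over the splitting field $K$ and single out three pairwise non-proportional linear factors $L_{i_1}, L_{i_2}, L_{i_3}$, whose existence is precisely the hypothesis of the theorem. Since any three linear forms in the two variables $X,Y$ are automatically linearly dependent over $K$, there exist non-zero $c_1, c_2, c_3 \in K$ with $c_1 L_{i_1} + c_2 L_{i_2} + c_3 L_{i_3} \equiv 0$. This is precisely the ``triangle'' among the linear factors of a decomposable form that is the standard input required by the effective decomposable-form theory; thus the binary-form hypothesis of Theorem~\ref{thm1b.3} is exactly the decomposable-form hypothesis of the general theorem when one specialises to $m=2$ variables. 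Applying the general theorem to $F$ then yields directly an estimate of the shape $[F(x,y)]_S \le \kappa_4 |F(x,y)|^{1-\kappa_3}$.

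It remains only to match the explicit dependence of $\kappa_3$ on $s$, $P$, $\log p_1, \ldots, \log p_s$ and $d = [K:\Qq]$. This is read off the general theorem: working with the values $L_{i_j}(x,y)$ requires passing from $\Zz$ to the ring of $S$-integers of $K$, so the $s$ rational primes in $S$ are replaced by at most $ds$ prime ideals of $\OO_K$ lying above them, and the power $d$ appearing in $(P(\log p_1) \cdots (\log p_s))^d$ in the denominator of $\kappa_3$ is the direct reflection of this --- exactly the factor that the Győry--Yu bounds acquire when applied over $K$ rather than over $\Qq$.

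The main obstacle is contained entirely in the proof of the general decomposable-form theorem; the present deduction is essentially routine. The only care needed is (i) the triangle observation above, and (ii) verifying that the resulting constants $\kappa_4$ and $c_2$ depend on $F$ alone. The latter holds because $F$ determines its set of linear factors over $K$ uniquely up to scaling and permutation, so any choice of triple $L_{i_1}, L_{i_2}, L_{i_3}$ contributes only quantities bounded in terms of $F$.
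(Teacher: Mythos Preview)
Your approach is exactly the paper's: Theorem~\ref{thm1b.3} is derived as the $m=2$ case of Theorem~\ref{thm1}, and the only verification needed is that a binary form with at least three pairwise non-proportional linear factors satisfies conditions \eqref{eq:2a} and \eqref{eq:2b} with $k=1$. One small sharpening: your ``any three linear forms in two variables are linearly dependent'' observation should be applied to \emph{every} triple of factors (with the non-proportionality forcing all three coefficients to be non-zero), which shows the graph $\GG(\LL_F)$ is complete and hence $k=1$; also, the explicit shape of $\kappa_3$ needs no separate argument, since Theorem~\ref{thm1} already states $\kappa_5$ in that form.
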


We obtain Theorem \ref{thm1a.3} on polynomials $f(X)\in\Zz [X]$ by 
applying Theorem \ref{thm1b.3} to the binary form $Y^{1 + \deg f}f(X/Y)$     
with $(x,y)=(x,1)\in\zprim$.

Let again $F(X,Y)\in\Zz [X,Y]$ be a binary form of degree $n\geq 2$
and of non-zero discriminant.
For any finite set of primes $S$ and any $\epsilon >0$, $B>0$,
we denote by $N(F,S,\epsilon ,B)$ the number of pairs $(x,y)\in\zprim$ such that
\begin{equation}\label{1a.1a}
\max(|x|,|y|)\leq B,\ \ F(x,y)\not=0,\ \  [F(x,y)]_S\geq |F(x,y)|^{\epsilon}.
\end{equation}
Denote by $D(F)$ the discriminant of $F$ and for a prime $p$,
denote by $g_p$ the largest integer $g$ such that
$p^{g}$ divides $D(F)$.

\begin{thm}\label{thm1b.2}
Let $F(X,Y)\in\Zz [X,Y]$ be a binary form of degree $n\geq 3$ with
non-zero discriminant. Further, let
$0<\epsilon <\medfrac{1}{n}$, and let $S$ be a finite set of primes.
Denote by $s'$ the number of primes $p\in S$ such that
$\mod{F(x,y)}{0}{p^{g_p +1}}$ has a solution $(x,y)\in\zprim$
and assume that this
number is positive. 
Then
\[
N(F,S,\epsilon ,B)\asymp_{F,S,\epsilon} B^{2-n\epsilon}(\log B)^{s'-1}\ \ \mbox{as } B\to\infty.
\]
\end{thm}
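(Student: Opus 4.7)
The plan is to adapt the strategy of Theorem \ref{thm1a.2} to binary forms, via a $p$-adic density computation combined with the Barroero--Widmer lattice point counting theorem \cite{BarWid14}, in the spirit of \cite{liu15}.

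\emph{Reduction to $S = S'$.} For each prime $p \in S \setminus S'$, no primitive pair $(x,y)$ satisfies $v_p(F(x,y)) > g_p$, since such a pair would furnish a primitive solution of $F \equiv 0 \pmod{p^{g_p+1}}$, contradicting $p \notin S'$. Hence $[F(x,y)]_S$ and $[F(x,y)]_{S'}$ differ by a bounded factor and one may assume $S = S'$.

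\emph{Density lemma.} For each $p \in S$ one establishes, as $k \to \infty$,
\[
R_p(k) := \#\bigl\{(x,y) \in (\Zz/p^k\Zz)^2 : \gcd(x,y,p)=1,\ v_p(F(x,y)) \geq k\bigr\} \asymp_{F,p} p^k.
\]
Since $D(F) \neq 0$, $F$ factors over $\overline{\Qq_p}$ as $c\prod_{i=1}^n(\alpha_i X - \beta_i Y)$ with pairwise non-proportional linear factors, so for large $k$ the primitive locus $\{v_p(F) \geq k\}$ decomposes $p$-adically into disjoint disks around the roots $[\beta_i:\alpha_i] \in \Pp^1(\Qq_p)$, each of Haar measure $\asymp p^{-k}$. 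The existence of at least one $\Qq_p$-rational root of $F$ (and hence positivity of the implied constant) follows from primitive solvability of $F \equiv 0 \pmod{p^{g_p+1}}$ via a Hensel-type lifting argument.

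\emph{Counting and summation.} Set $M_\kv = \prod_{p\in S} p^{k_p}$. By the Chinese remainder theorem the condition $v_p(F) \geq k_p$ for all $p \in S$ amounts to $F(x,y) \equiv 0 \pmod{M_\kv}$ in $\asymp M_\kv$ primitive residue classes modulo $M_\kv$; counting the primitive integer pairs in the box $\max(|x|,|y|) \leq B$ lying in these classes by Barroero--Widmer \cite{BarWid14} (together with a standard primitivity sieve over primes outside $S$) yields, uniformly in admissible $\kv$,
\[
Q_\kv(B) := \#\{(x,y)\in\zprim : \max\leq B,\ v_p(F(x,y))\geq k_p\ \forall p\in S\} \asymp_{F,S} B^2/M_\kv.
\]
Observe that $[F(x,y)]_S \geq |F(x,y)|^\epsilon$ is equivalent to $|F(x,y)| \leq M^{1/\epsilon}$ where $M = [F(x,y)]_S$. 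Stratifying $N(F,S,\epsilon,B)$ by the exponent vector $\kv=(v_p(F(x,y)))_{p\in S}$ and majorizing ``$=$'' by ``$\geq$'', together with the bound $\#\{(x,y) \in [-B,B]^2 : |F(x,y)| \leq T\} \ll \min(B^2,\, T^{2/n})$ (valid for $n \geq 3$ by analyzing the strips near the real roots of $F$, where the integral $\int T/R^{n-1}\,dR$ converges), gives
\[
N(F,S,\epsilon,B) \ll \sum_\kv M_\kv^{-1} \min\!\bigl(B^2,\, M_\kv^{2/(n\epsilon)}\bigr).
\]
Splitting at the threshold $M_\kv \asymp B^{n\epsilon}$ and invoking the $S$-unit count $\#\{S\text{-units} \leq X\} \sim (\log X)^{s'}/(s'!\prod_{p\in S}\log p)$ together with partial summation, both halves contribute $\asymp B^{2-n\epsilon}(\log B)^{s'-1}$, yielding the upper bound. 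For the lower bound, restrict to tuples $\kv$ with $M_\kv \in [B^{n\epsilon},\,2B^{n\epsilon}]$: the size constraint $|F|\leq M_\kv^{1/\epsilon}$ is then automatic on the box, inclusion-exclusion on the density lemma gives $\gg B^2/M_\kv$ primitive pairs with exponent vector exactly $\kv$, and there are $\asymp(\log B)^{s'-1}$ such tuples (lattice points in a thin slab inside an $(s'-1)$-dimensional simplex), summing to the claimed lower bound.

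\emph{Main obstacle.} The density lemma at primes dividing $D(F)$, where elementary Hensel lifting is insufficient and a Newton polygon analysis of $F$ at $p$ is required; and the uniformity of the Barroero--Widmer count across the family of congruence conditions $\{F \equiv 0 \pmod{M_\kv}\}$ with $M_\kv$ ranging up to $B^{n\epsilon}$, which demands simultaneous control of the successive minima of the associated lattices and of the geometry of the truncated sublevel set $\{\max\leq B,\ |F|\leq T\}$.
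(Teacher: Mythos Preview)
Your outline follows the same architecture as the paper's proof—stratify by the $S$-part, count via Barroero--Widmer, sum—and your lower-bound sketch is essentially correct. But the upper bound as you have set it up does not close, and the missing idea is exactly what you are circling around in your ``main obstacle'' paragraph without naming it.

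The problem is your choice to count in the $\asymp M_{\kv}$ \emph{affine} residue classes in $(\Zz/M_{\kv}\Zz)^2$. Each such class is a coset of $M_{\kv}\Zz^2$, so Barroero--Widmer gives error $O(\max(1,B/M_{\kv}))$ per class; summed over $\asymp M_{\kv}$ classes this is $O(\max(B,M_{\kv}))$ for each $\kv$. Summing over all $\kv$ with $M_{\kv}\le c_F B^n$ then contributes $\sum_{M_{\kv}>B}M_{\kv}\asymp B^n(\log B)^{s'-1}$, which swamps the main term. In particular your claim $Q_{\kv}(B)\asymp B^2/M_{\kv}$ ``uniformly in admissible $\kv$'' is false in this regime. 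The paper's remedy is to pass from affine to \emph{projective} residue classes: the primitive solutions of $F(x,y)\equiv 0\pmod h$ lie in $r(F,h)$ classes $[x_0:y_0]\pmod h$, each of which is the primitive part of a \emph{primitive sublattice} of $\Zz^2$ of determinant $h$, and by Stewart's theorem (Lemma~\ref{lem:3.2a}) one has $r(F,p^k)=r(F,p^{g_p+1})$ for $k\ge g_p+1$, hence $r(F,h)\ll_{F,S}1$ for all $S$-integers $h$. With only $O_{F,S}(1)$ lattices the Barroero--Widmer error becomes $O(B\log B)$ per $\kv$ uniformly, and the summation goes through. This boundedness of $r(F,h)$ is in fact equivalent to your density lemma $R_p(k)\asymp p^k$ (divide by $\phi(p^k)$), so you already have the right input—you just have not organized the counting to exploit it. Stewart's theorem also cleanly resolves your stated obstacle at primes $p\mid D(F)$, without Newton polygons.
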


Parts (i) of Theorems \ref{thm1a.1} and \ref{thm1b.1} are easy consequences
of the $p$-adic Thue-Siegel-Roth Theorem. Part (ii) of Theorem \ref{thm1a.1} is a
consequence of the fact that for a given non-constant polynomial $f(X)\in\Zz [X]$
there are infinitely many primes $p$ such that $f(X)$ has a zero in $\Zz_p$.
The proof of part (ii) of Theorem \ref{thm1b.1} uses some geometry of numbers.

There are two main tools in the proof of Theorem \ref{thm1b.2}.
The first is a result of Stewart
\cite[Thm. 2]{stewart} on the number of congruence classes
$x$ modulo $p^k$ of $\mod{f(x)}{0}{p^k}$ for $f(X)$ a polynomial
and $p^k$ a prime power.
The second is a powerful lattice point counting result
of Barroero and Widmer \cite[Thm. 1.3]{BarWid14}.
The proof of Theorem \ref{thm1a.2} is very similar,
but instead of the result of Barroero and Widmer it uses a much more
elementary counting argument.

\subsection{Ineffective results for decomposable forms}\label{sec1b}

We will state results on the $S$-parts of values of decomposable forms
in $m$ variables at integral points, where $m\geq 2$.

We start with some notation and definitions.
Let $K$ be a finite, normal extension of $\Qq$.
For a linear form $\ell =\alpha_1X_1+\cdots +\alpha_mX_m$
with coefficients in $K$ and for an element $\sigma$ of the Galois group
$\galk$ we define $\sigma (\ell ):=\sigma (\alpha_1)X_1+\cdots +\sigma (\alpha_m)X_m$ and then for a set of linear forms $\LL=\{\ell_1\kdots\ell_r\}$
with coefficients in $K$ we write 
$\sigma (\LL ):=\{ \sigma (\ell_1)\kdots \sigma (\ell_r )\}$. 
A set of linear forms $\LL$ with coefficients in $K$ is called
\emph{$\galk$-symmetric} if $\sigma (\LL )=\LL$ for each $\sigma\in\galk$,
and \emph{$\galk$-proper} if for each $\sigma\in\LL$ we have either
$\sigma (\LL )=\LL$ or $\sigma (\LL )\cap\LL =\emptyset$.
We denote by $[\LL ]$ the $K$-vector space generated by $\LL$,
and define $\rank\LL$ to be the dimension of $[\LL ]$ over $K$.
Finally, we define the sum of two vector spaces $V_1$, $V_2$ over $K$ by
$V_1+V_2:=\{ \xv +\yv :\, \xv\in V_1,\, \yv\in V_2\}$.

Recall that a decomposable form in $\Zz [X_1\kdots X_m]$
is a homogeneous polynomial that factors into linear forms in $X_1\kdots X_m$
over some
extension of $\Qq$. The smallest extension over which such a factorization
is possible is called the 
\emph{splitting field} 
of the decomposable form.
This is a finite, normal extension of $\Qq$. 

Let $F\in\Z[X_1\kdots X_m]$ be a decomposable form of degree $n\ge 3$
with splitting field $K$. Then we can express $F$ as
\begin{equation}\label{eq1.0}
\left\{
\begin{array}{l}
F=c\ell_1^{e(\ell_1)}\cdots \ell_r^{e(\ell_r)}\ \ \mbox{with} 
\\[0.15cm]
\mbox{$c$ a non-zero rational,}
\\[0.15cm]
\mbox{$\LL_F=\{ \ell_1\kdots\ell_r\}$
a $\galk$-symmetric set of pairwise}
\\
\mbox{non-proportional linear forms with coefficients in $K$,}
\\[0.15cm]
\mbox{$e(\ell_1)\kdots e(\ell_r)$ positive integers,
with $e(\ell_i)=e(\ell_j)$}
\\
\mbox{whenever $\ell_j=\sigma (\ell_i)$ for some $\sigma\in\galk$.}
\end{array}\right.
\end{equation}
Lastly, define $\zmprim$ to be the set of ${\bf x}=(x_1\kdots x_m)\in\Zz^m$
with $\gcd (x_1\kdots x_m)=1$ and define $\|\xv\|$ to be the maximum norm
of $\xv\in\zmprim$.

Let $S=\{ p_1\kdots p_s\}$ be a finite set of primes,
and $F\in\Zz [X_1\kdots X_m ]$ a decomposable form.
For $\xv\in\zmprim$ with $F(\xv )\not= 0$, we can write
\begin{equation}\label{eq:1}
F(\xv )=p_1^{a_1}\cdots p_s^{a_s}\cdot b,
\end{equation}
where $a_1,\ldots,a_s$ are non-negative integers and $b$ is an integer
coprime with $p_1\cdots p_s$. Then the $S$-part
$[F(\xv )]_S$ is $p_1^{a_1}\cdots p_s^{a_s}$.
We may view \eqref{eq:1} as a Diophantine equation in $\xv\in\zmprim$
and $a_1\kdots a_s\in\Zz_{\geq 0}$, a so-called
\emph{decomposable form equation}.
Schlickewei \cite{schl77} considered \eqref{eq:1} in the case that $F$ is a norm
form (i.e., a decomposable form that is irreducible over $\Qq$) and formulated
a criterion in terms of $F$ implying that \eqref{eq:1} has only finitely many 
solutions. Evertse and Gy\H{o}ry \cite{evgy88} gave another finiteness criterion in terms of $F$, valid for arbitrary decomposable forms.
Recently
\cite[Chap. 9, Thm. 9.1.1]{evgy15}, they refined this as follows.
Call an integer \emph{$S$-free} if it is non-zero, and coprime with the primes in $S$.

\begin{thmalpha}\label{B}
Let $F\in\Zz [X_1\kdots X_m]$ be a decomposable form with splitting field $K$,
given in the form \eqref{eq1.0}, and let $\LL$ be a finite set of linear forms
in $K[X_1\kdots X_m]$, containing $\LL_F$. Then the following two assertions are equivalent:
\begin{itemize}
\item[(i)] $\rank\LL_F=m$, and for every $\galk$-proper subset $\MM$
of $\LL_F$ with $\emptyset\propersubset\MM\propersubset\LL_F$, we have
\begin{equation}\label{eq1.0a}
\LL\cap\Big(\sum_{\sigma\in\galk} [\sigma (\MM )]\cap[\LL_F\setminus\sigma (\MM )]\Big)\not=\emptyset ;
\end{equation}
\item[(ii)] for every finite set of primes $S=\{ p_1\kdots p_s\}$ and 
every $S$-free integer $b$, there are only finitely many $\xv\in\zmprim$
and non-negative integers $a_1\kdots a_s$ such that
\begin{equation}\label{eq1.0b}
F(\xv )= p_1^{a_1}\cdots p_s^{a_s}b,\ \ \ \ell (\xv )\not=0\ \mbox{for } \ell\in\LL .
\end{equation}
\end{itemize}
\end{thmalpha}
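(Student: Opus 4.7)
The plan is to prove the equivalence with the $p$-adic Subspace Theorem carrying the hard direction $\mathrm{(i)} \Rightarrow \mathrm{(ii)}$, and explicit constructions handling the converse.

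For $\mathrm{(i)} \Rightarrow \mathrm{(ii)}$, I would argue by contradiction. Suppose that for some finite $S=\{p_1\kdots p_s\}$ and some $S$-free integer $b$, equation \eqref{eq1.0b} admits infinitely many solutions $\xv\in\zmprim$. After passing to the splitting field $K$, the identity $F(\xv)=p_1^{a_1}\cdots p_s^{a_s}b$ combined with the product formula over places of $K$ gives that
\[
\prod_{v}\prod_{\ell\in\LL_F}\bigl(|\ell(\xv)|_v/\|\xv\|_v\bigr)^{e(\ell)}
\]
is bounded uniformly in $\xv$. For each place $v$ of $K$ above $S\cup\{\infty\}$ one selects a linear form $\ell_v^{\ast}\in\LL_F$ minimizing $|\ell(\xv)|_v$, obtaining a system of inequalities to which the $p$-adic Subspace Theorem of Schlickewei and Schmidt applies; it concludes that the solutions lie in a finite union of proper linear subspaces of $K^m$. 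By the pigeonhole principle, one such subspace $T\propersubset K^m$ contains infinitely many solutions.

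The heart of the proof is then the combinatorial step of extracting from $T$ a subset $\MM$ that violates \eqref{eq1.0a}. I would define $\MM\subseteq\LL_F$ to consist of those linear forms $\ell$ whose restriction to $T$ lies in the span of the restrictions to $T$ of the remaining forms in $\LL_F$; using the $\galk$-symmetry of the Subspace Theorem setup and the hypothesis $\rank\LL_F=m$, show that $\MM$ is $\galk$-proper with $\emptyset\propersubset\MM\propersubset\LL_F$. Since every $\ell\in\LL$ is non-vanishing on the infinite subfamily of solutions inside $T$, while any form lying in $\sum_{\sigma\in\galk}[\sigma(\MM)]\cap[\LL_F\setminus\sigma(\MM)]$ is, by construction, identically zero on $T$, one obtains
\[
\LL\cap\Big(\sum_{\sigma\in\galk}[\sigma(\MM)]\cap[\LL_F\setminus\sigma(\MM)]\Big)=\emptyset,
\]
contradicting \eqref{eq1.0a}. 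This extraction, and the verification of Galois-properness of $\MM$, is the main technical obstacle; as in the Evertse--Gy\H{o}ry monograph it will require careful bookkeeping of the contributions of conjugate linear forms at each place.

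For $\mathrm{(ii)} \Rightarrow \mathrm{(i)}$ I argue contrapositively. If $\rank\LL_F<m$, the common kernel $W:=\bigcap_{\ell\in\LL_F}\ker\ell$ contains a nonzero $\xv_0\in\Zz^m$; choosing $\xv_1\in\Zz^m$ with $F(\xv_1)\ne 0$ and $\ell(\xv_1)\ne 0$ for all $\ell\in\LL$, the vectors $\xv_1+t\xv_0$ (with $t$ ranging over an infinite set of integers keeping the vector primitive and off the finitely many bad hyperplanes cut out by $\LL\setminus\LL_F$) satisfy $F(\xv_1+t\xv_0)=F(\xv_1)$, producing infinitely many solutions of \eqref{eq1.0b} with $S=\emptyset$ and $b=F(\xv_1)$. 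If instead $\rank\LL_F=m$ but \eqref{eq1.0a} fails for some $\galk$-proper $\MM$, I would consider $U:=\bigcap_{\ell\in\MM}\ker\ell$ and construct an infinite family of primitive $\xv\in\Zz^m$ close to $U$: along such a family the factors $\ell(\xv)$ for $\ell\in\MM$ are supported on a finite list of primes that we place into $S$, while $\prod_{\ell\in\LL_F\setminus\MM}\ell(\xv)^{e(\ell)}$ remains uniformly bounded; the failure of the intersection in \eqref{eq1.0a} is precisely what guarantees that no form in $\LL$ is forced to vanish along this family, so the non-vanishing constraints of \eqref{eq1.0b} can be maintained, producing the required infinite family of solutions.
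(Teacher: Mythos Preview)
The paper does not prove Theorem~B at all: it is quoted from \cite[Chap.~9, Thm.~9.1.1]{evgy15}, with the single remark that it ``was deduced from a finiteness theorem of Evertse \cite{ev84} and van der Poorten and Schlickewei \cite{vdpschl82,vdpschl91} on $S$-unit equations over number fields.'' So the intended route for $(\mathrm{i})\Rightarrow(\mathrm{ii})$ is to observe that once $F(\xv)=p_1^{a_1}\cdots p_s^{a_s}b$, the values $\ell(\xv)$ for $\ell\in\LL_F$ are, up to bounded ideals, $T$-units in $K$ (with $T$ the places above $S\cup\{\infty\}$ and the primes dividing $bc$); the combinatorial hypothesis (i) then manufactures genuine $S$-unit equations among these values, and the finiteness theorem for $S$-unit equations finishes the job. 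Your plan to bypass $S$-unit equations and hit the problem with the Subspace Theorem directly is in the spirit of the paper's proof of the \emph{stronger} Theorem~\ref{thm2.0}, but note that there the argument is an induction on $\dim D$ using Lemmas~\ref{lem:4.1}--\ref{lem:4.4}, not a single application followed by an extraction of a bad~$\MM$.

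Your extraction step has a real gap. The subspace $T$ produced by the $p$-adic Subspace Theorem has no reason to be $\galk$-invariant, so your set $\MM$ (defined via restrictions to $T$) has no reason to be $\galk$-proper; the phrase ``$\galk$-symmetry of the Subspace Theorem setup'' does not buy you this. Likewise the claim that every form in $\sum_{\sigma}[\sigma(\MM)]\cap[\LL_F\setminus\sigma(\MM)]$ vanishes on $T$ is unjustified: a relation visible on $T$ for $\MM$ becomes, after applying $\sigma$, a relation visible on $\sigma(T)$, not on $T$. If you want to run a direct Subspace argument, you should instead follow the inductive scheme of Section~\ref{section4}: stay inside a $\Qq$-linear subspace $D$, use the connectedness Lemma~\ref{lem:4.1} (which is exactly where hypothesis (i) enters) to build the product of linear forms to which the Subspace Theorem is applied, and then descend to a proper subspace of $D$.

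Your contrapositive for $(\mathrm{ii})\Rightarrow(\mathrm{i})$ also breaks in the second case. Taking $\xv$ ``close to'' $U=\bigcap_{\ell\in\MM}\ker\ell$ makes $|\ell(\xv)|$ small for $\ell\in\MM$, but gives no control whatsoever on $\ell(\xv)$ for $\ell\in\LL_F\setminus\MM$; your assertion that $\prod_{\ell\in\LL_F\setminus\MM}\ell(\xv)^{e(\ell)}$ stays bounded is simply false for a generic family approaching $U$. The actual construction (see \cite[Chap.~9]{evgy15}) is more delicate: from the failure of \eqref{eq1.0a} one produces a one-parameter family of $T$-integral points on which the linear factors split into groups whose products are $T$-units times a fixed constant, and one checks that no $\ell\in\LL$ vanishes along the family precisely because $\LL$ misses the space in \eqref{eq1.0a}. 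Your sketch has the right intuition about the role of the failed intersection, but the family you describe does not do what you claim.
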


\noindent
This theorem was deduced from a finiteness theorem of Evertse \cite{ev84}
and van der Poorten and Schlickewei \cite{vdpschl82,vdpschl91}
on $S$-unit equations over number fields. 

The following result gives an improvement of (ii).
We denote by $|\cdot |_{\infty}$ the standard archimedean absolute value on $\Qq$,
and for a prime $p$ by $|\cdot |_p$ the standard $p$-adic absolute value, with 
$|p|_p=p^{-1}$. Further, $\|\xv\|$ denotes the maximum norm
of $\xv\in\zmprim$. 

\begin{thm}\label{thm2.0}
Let $F\in\Zz [X_1\kdots X_m]$ be a decomposable form in $m\geq 2$ variables
with splitting field $K$ and $\LL\supseteq\LL_F$ a finite set of linear forms
in $K[X_1\kdots X_m]$, satisfying condition (i) of Theorem \ref{B}.
Further, let $S$ be a finite set of primes and let 
$\epsilon >0$.
Then there are only finitely many $\xv\in\zmprim$
with 
\begin{equation}\label{eq1.0bb}
\left\{\begin{array}{l}
\displaystyle{\prod_{p\in\Si} |F(\xv )|_p\leq \|\xv\|^{(1/(m-1))-\epsilon}},\\[0.3cm] 
\quad\ell (\xv )\not= 0\ \mbox{for } \ell\in\LL .
\end{array}\right. 
\end{equation}
\end{thm}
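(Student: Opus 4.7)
The strategy is to apply the $p$-adic Subspace Theorem of Schmidt and Schlickewei to a family of linear forms derived from the factorization of $F$, and to handle each of the finitely many proper linear subspaces of $\Qq^m$ that it produces by induction on $m$, using condition (i) of Theorem \ref{B} to drive the inheritance of hypotheses.

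First, set $d=[K:\Qq]$, let $T$ be the set of places of $K$ above $S\cup\{\infty\}$, and normalize the absolute values $|\cdot|_v$ on $K$ so that $\prod_v|x|_v=1$ for $x\in K^*$. Writing $F=c\,\ell_1^{e(\ell_1)}\cdots\ell_r^{e(\ell_r)}$ as in \eqref{eq1.0} and using the product formula $\prod_v|\ell_i(\xv)|_v=1$ (legitimate because $\ell_i(\xv)\not=0$ by the hypothesis on $\LL\supseteq\LL_F$), the hypothesis of the theorem becomes, up to a constant depending only on $F$,
\[
\prod_{v\in T}\prod_{i=1}^r|\ell_i(\xv)|_v^{e(\ell_i)}\;\leq\;C_1\,\|\xv\|^{1/(m-1)-\epsilon/2}.
\]

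Next, attach a \emph{type} to each $\xv$: for every $v\in T$ choose a $K$-linearly independent $m$-tuple $(\ell_{i_1(v)},\ldots,\ell_{i_m(v)})$ from $\LL_F$ that minimizes $\prod_{j=1}^m|\ell_{i_j(v)}(\xv)|_v$; there are only finitely many types, so it suffices to bound the solutions of one type. Writing every $\ell\in\LL_F$ as a $K$-linear combination of the chosen $\ell_{i_j(v)}$ and exploiting the minimality of the chosen $m$-tuple transfers the above inequality into
\[
\prod_{v\in T}\prod_{j=1}^m\frac{|\ell_{i_j(v)}(\xv)|_v}{\|\xv\|_v}\;\leq\;C_2\,\|\xv\|^{-\delta}
\]
for some $\delta=\delta(F,\epsilon,m)>0$; the positivity of $\delta$ is precisely what the exponent $1/(m-1)$ in the hypothesis affords. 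The $p$-adic Subspace Theorem then confines $\xv$ to a finite union of proper linear subspaces of $\Qq^m$.

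For $m=2$ the proof is finished at once, since a $1$-dimensional $\Qq$-subspace of $\Qq^2$ contains only two primitive integer vectors. For $m\geq3$ I would induct on $m$: for each proper subspace $V$, restrict $F$ and $\LL$ to $V$. The intersection condition $\LL\cap\big(\sum_{\sigma\in\galk}[\sigma(\MM)]\cap[\LL_F\setminus\sigma(\MM)]\big)\not=\emptyset$ in condition (i) of Theorem \ref{B} is formulated precisely so that either some $\ell\in\LL$ vanishes identically on $V$ (whence $V$ contains no solution by the non-vanishing hypothesis), or the pair $(F|_V,\LL|_V)$ satisfies the analogue of condition (i) relative to the splitting field of $F|_V$, allowing the induction to continue. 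I expect the main obstacle to be this induction step: checking that condition (i) is inherited by every proper subspace produced by the Subspace Theorem requires careful exploitation of the Galois-symmetric structure of $\LL_F$ and of the interaction of $\LL$ with restrictions to subspaces. The exponent-balancing that produces $\delta>0$ from the precise value $1/(m-1)$ in the second step is technically delicate but routine once the right normalizations are fixed.
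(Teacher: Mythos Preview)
Your overall architecture (apply the $p$-adic Subspace Theorem, then induct on the dimension of the containing subspace) matches the paper's. But the weight of the argument is distributed very differently from how you imagine, and the step you call ``routine'' is precisely where the real work lies.

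\textbf{The gap is your step 3.} You assert that choosing at each $v\in T$ a linearly independent $m$-tuple from $\LL_F$ minimizing $\prod_j|\ell_{i_j(v)}(\xv)|_v$, and ``exploiting minimality'', transforms the hypothesis into a Subspace-Theorem inequality with $\delta>0$. You do not explain how, and I do not believe the argument you sketch can be completed. Notice that your construction in this step never invokes condition (i) of Theorem~\ref{B}: you use only that $\rank\LL_F=m$. But condition~(i) is exactly what ties the exponent $1/(m-1)$ to the combinatorics of $\LL_F$; without it the theorem is false. So either your exponent manipulation secretly needs condition~(i) somewhere---in which case you have not identified where---or it is simply wrong.

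The paper handles this step by a completely different mechanism. It does \emph{not} choose linearly independent $m$-tuples. Instead, for each subspace $D$ in the induction, it constructs a graph $\GG_D$ on $\LL_F$ whose edges record membership in subsets of $\LL_F$ that are \emph{minimally linearly dependent on $D$}. Condition~(i) is used to prove that $\GG_D$ is connected whenever no form of $\LL$ vanishes on $D$ (Lemma~\ref{lem:4.1}). Connectivity then yields, for each $\xv$, a single minimally $D$-dependent set $\II\subseteq\LL_F$ with $H_{\II}(\xv)\gg\|\xv\|^{1/(m-1)}$ (Lemma~\ref{lem:4.4}); the exponent $1/(m-1)$ arises because one can climb from a single form to full $D$-rank in at most $\dim D-1\le m-1$ steps. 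The one linear relation carried by $\II$ then gives, at each $v$, linearly independent forms obtained by dropping the largest factor, and the Subspace Theorem is applied in the auxiliary variables $y_i=\ell_i(\xv)$, not in $\xv$ directly.

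\textbf{On the induction.} You plan to restrict $F$ and $\LL$ to each subspace $V$ and prove that condition~(i) is inherited. The paper avoids this entirely: it keeps $F$ and $\LL$ fixed and reformulates everything \emph{relative to $D$} (via $\rankD$, ``linearly dependent on $D$'', the graph $\GG_D$). The single hypothesis---condition~(i) for the original pair $(F,\LL)$---drives the argument uniformly at every stage of the induction through Lemma~\ref{lem:4.1}. So the ``inheritance'' problem you flag as the main obstacle simply does not arise. You have the difficulty backwards: the induction is easy once the relative framework is set up; producing the Subspace-Theorem inequality is hard and is where condition~(i) actually enters.
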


Chen and Ru \cite{chenru} proved a similar result with $\LL =\LL_F$
the set of linear factors of $F$ and with a stronger
condition instead of (i), on the other hand 
they considered decomposable forms with coefficients 
in an arbitrary number field.

From Theorem \ref{thm2.0} and Theorem \ref{B} we deduce the following corollary.

\begin{cor}\label{cor2.0} 
Let $F\in\Zz [X_1\kdots X_m]$ be a 
decomposable form in $m\geq 2$ variables
with splitting field $K$ and $\LL\supseteq\LL_F$ a finite set of linear forms
in $K[X_1\kdots X_m]$.
\begin{itemize}
\item[(i)]
Assume that $F$ and $\LL$ satisfy condition (i) of Theorem \ref{B}.
Suppose $F$ has degree $n$. Let $S$ be a finite set of primes and let 
$\epsilon >0$.
Then for every $\xv\in\zmprim$ with $\ell (\xv )\not= 0$ for $\ell\in\LL$
we have
\begin{equation}\label{eq1.0c}
[F(\xv )]_S
\ll_{F,\LL,S,\epsilon}|F(\xv )|^{1-(1/n(m-1))+\epsilon}.
\end{equation}
\item[(ii)]
Assume that $F$ and $\LL$ do not satisfy condition (i) of Theorem \ref{B}.
Then there are a finite set of primes $S$ and a constant $\gamma >0$ such that
\[
[F(\xv )]_S\geq \gamma |F(\xv )|
\]
holds for infinitely many $\xv\in\zmprim$ with $\ell (\xv )\not= 0$ 
for all $\ell\in\LL$.
\end{itemize}
\end{cor}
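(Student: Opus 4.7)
The plan is to convert the $S$-part into a product of $p$-adic absolute values and reduce part (i) to Theorem \ref{thm2.0} and part (ii) to the equivalence in Theorem \ref{B}. First I would record the basic identity that for any non-zero integer $N$, $|N|_p = p^{-\ordp N}$, so $\prod_{p\in S}|N|_p = [N]_S^{-1}$; combined with $|N|_\infty = |N|$, this yields for every $\xv\in\zmprim$ with $F(\xv)\neq 0$ the formula
\[
\prod_{p\in\Si} |F(\xv)|_p \;=\; \frac{|F(\xv)|}{[F(\xv)]_S}.
\]

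For part (i), I would fix $\epsilon>0$, set $\epsilon' = n\epsilon$, and assume $\epsilon$ is small enough that $(1/(m-1))-\epsilon'>0$. Theorem \ref{thm2.0} then guarantees that all but finitely many $\xv\in\zmprim$ with $\ell(\xv)\neq 0$ for $\ell\in\LL$ satisfy
\[
\prod_{p\in\Si} |F(\xv)|_p \;>\; \|\xv\|^{(1/(m-1))-\epsilon'}.
\]
The trivial bound $|F(\xv)|\ll_F \|\xv\|^n$ gives $\|\xv\|\gg_F |F(\xv)|^{1/n}$, so raising to the positive power $(1/(m-1))-\epsilon'$ and feeding into the displayed identity yields
\[
\frac{|F(\xv)|}{[F(\xv)]_S} \;\gg_F\; |F(\xv)|^{((1/(m-1))-\epsilon')/n},
\]
which rearranges to the desired $[F(\xv)]_S\ll_F |F(\xv)|^{1-1/(n(m-1))+\epsilon}$. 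The finitely many $\xv$ excluded by Theorem \ref{thm2.0} contribute a bounded factor that is absorbed into the $\ll_{F,\LL,S,\epsilon}$ constant, since once $F,\LL,S,\epsilon$ are fixed each such point contributes a fixed positive value to the ratio being bounded.

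For part (ii), I would invoke the contrapositive of Theorem \ref{B}: if $F,\LL$ fail condition (i), then statement (ii) of Theorem \ref{B} fails as well, so there exist a finite set of primes $S=\{p_1,\ldots,p_s\}$ and a fixed $S$-free integer $b$ such that the decomposable form equation $F(\xv) = p_1^{a_1}\cdots p_s^{a_s} b$ with $\ell(\xv)\neq 0$ for $\ell\in\LL$ admits infinitely many solutions $(\xv,a_1,\ldots,a_s)\in\zmprim\times\Zz_{\geq 0}^s$. For each such solution, the identity in the first paragraph gives $|F(\xv)|/[F(\xv)]_S = |b|$, so choosing $\gamma = 1/|b|$ gives $[F(\xv)]_S\geq \gamma|F(\xv)|$ on the entire infinite family. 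Overall, there is no serious obstacle: both parts are short reductions. The only mildly delicate point is the absorption of the finite exceptional set from Theorem \ref{thm2.0} into an implicit constant in (i), but this is routine.
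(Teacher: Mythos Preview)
Your proposal is correct and follows essentially the same route as the paper: both parts use the identity $\prod_{p\in\Si}|F(\xv)|_p = |F(\xv)|/[F(\xv)]_S$, reduce (i) to Theorem \ref{thm2.0} together with $|F(\xv)|\ll_F\|\xv\|^n$, and reduce (ii) to the failure of assertion (ii) in Theorem \ref{B}. Your explicit remark about absorbing the finite exceptional set into the implied constant is the only cosmetic difference from the paper, which simply writes the $\gg$ directly.
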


\noindent
Indeed, if $F$, $\LL$ satisfy condition (i) of Theorem \ref{B}, $S$ is a finite set
of primes and $\epsilon >0$ then
\[
\frac{|F(\xv )|}{[F(\xv )]_S}=\prod_{p\in \Si} |F(\xv )|_p
\gg\|\xv\|^{(1/(m-1))-\epsilon}
\gg |F(\xv )|^{(1/n(m-1))-\epsilon /n}
\]
for all $\xv\in\zmprim$ with $\ell (\xv )\not= 0$ for all $\ell\in\LL$,
where the implied constants depend on $F$, $S$ and $\epsilon$.
This implies part (i)
of Corollary \ref{cor2.0}. If on the other hand $F$ and $\LL$ do not satisfy
condition (i) of Theorem \ref{B} then there are a finite set of primes $S$
and an $S$-free integer $b$ such that \eqref{eq1.0b} has infinitely many solutions.
This yields infinitely many $\xv\in\zmprim$ such that $\ell (\xv )\not=0$
for all $\ell\in\LL$ and
\[
[F(\xv )]_S=|F(\xv )|/|b|.
\]
Thus, part (ii) of Corollary \ref{cor2.0} follows.
\\[0.2cm]

We can improve on Corollary \ref{cor2.0} if we assume condition (i) of Theorem \ref{B}
with $\LL =\LL_F$, i.e., 
\begin{eqnarray}\label{eq1.0e} 
&&\mbox{$\rank\LL_F=m$, and for every $\galk$-proper subset}
\\[-0.12cm]
\nonumber
&&\mbox{$\MM$ of $\LL_F$ with 
$\emptyset\propersubset\MM\propersubset\LL_F$ we have}
\\[0.1cm]
\nonumber
&&\ \ \LL_F\cap\Big(\sum_{\sigma\in\galk} [\sigma (\MM )]\cap[\LL_F\setminus\sigma (\MM )]\Big)\not=\emptyset
\end{eqnarray}
and in addition to this,
\begin{equation}\label{eq1.0d}
F(\xv )\not=0\ \ \mbox{for every non-zero $\xv\in\Qq^m$.}
\end{equation}

Let $D$ be a $\Qq$-linear subspace of $\Qq^m$. We say that a 
non-empty 
subset $\MM$
of $\LL_F$ is \emph{linearly dependent on $D$}
if there is a non-trivial
$K$-linear combination of the forms in $\MM$ that vanishes identically on $D$;
otherwise, $\MM$ is said to be \emph{linearly independent on $D$}.
Further, for a non-empty subset $\MM$ of $\LL_F$
we define $\rankD\MM$ to be the cardinality of 
a maximal subset of $\MM$ that is linearly independent on $D$,
and then
\[
q_D(\MM ):=\frac{\sum_{\ell\in\MM} e(\ell )}{\rankD\MM}.
\]
For instance, $\rankD\LL_F=\dim D$, so $q_D(\LL_F)=\deg F/\dim D$. Then put
\[
q_D(F):=\max\{ q_D(\MM ):\emptyset\propersubset\MM\propersubset\LL_F,\ \rankD\MM <\dim D\}.
\]
Finally, put
\[
c(F):=\max_D\frac{q_D(F)}{q_D(\LL_F)}=\max_D q_D(F)\cdot \frac{\dim D}{\deg F},
\]
where the maximum is taken over all $\Qq$-linear subspaces $D$ of $\Qq^m$
with $\dim D\geq 2$. 
Lemma \ref{lem5.2}, which is stated and proved in Section \ref{section5} below,
implies that if $F$ satisfies both \eqref{eq1.0e} and 
\eqref{eq1.0d}, then $c(F)<1$. 
We will not consider the problem
how to compute $c(F)$, that is, how to determine a subspace $D$
for which $q_D(F)/q_D(\LL_F)$ is maximal; 
this may involve some linear algebra that is
beyond the scope of this paper. 

Given a decomposable form $F\in\Zz [X_1\kdots X_m]$,
a finite set of primes $S$, and reals $\epsilon >0$, $B>0$, we define
$N(F,S,\epsilon ,B)$ to be the set of $\xv\in\zmprim$ with
$[F(\xv )]_S\geq |F(\xv )|^{\epsilon}$ and $\|\xv\|\leq B$.

\begin{thm}\label{thm2.1}
Let $m\geq 2$ and let $F\in\Zz [X_1\kdots X_m]$ be a decomposable form 
as in \eqref{eq1.0} satisfying \eqref{eq1.0e} and \eqref{eq1.0d}.
\begin{itemize}
\item[(i)] For every finite set of primes $S$, every $\epsilon >0$ and
every $\xv\in\zmprim$ we have
\[
[F(\xv )]_S\ll_{F,S,\epsilon} |F(\xv )|^{c(F)+\epsilon};
\]
\item[(ii)] There are infinitely many primes $p$, and for every of these
primes $p$ infinitely many $\xv\in\zmprim$, such that
\[
[F(\xv )]_{\{ p\}}\gg_{F,p} |F(\xv )|^{c(F)};
\]
\item[(iii)] For every finite set of primes $S$ and every $\epsilon$ 
with $0<\epsilon <1$ we have
\[
N(F,S,\epsilon ,B)\ll_{F,S,\epsilon} B^{m(1-\epsilon )}\ \ \mbox{as } B\to\infty .
\]
\end{itemize}
\end{thm}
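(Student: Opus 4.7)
The three parts of Theorem \ref{thm2.1} are proved by quite different techniques. Part (i) is a refinement of Theorem \ref{thm2.0} by stratification over $\Qq$-linear subspaces; part (ii) is an explicit Minkowski/Hensel construction; part (iii) is a counting argument. The main technical obstacle is the combinatorial bookkeeping in part (i), namely identifying $c(F)$ as the exact exponent produced by the subspace argument.

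\emph{Part (i).} By \eqref{eq1.0d} we have $|F(\xv)|\asymp_F\|\xv\|^n$, so the claim is equivalent to a lower bound
\[
\prod_{p\in\Si}|F(\xv)|_p\gg_{F,S,\epsilon}\|\xv\|^{n(1-c(F))-n\epsilon}.
\]
For $\xv\in\zmprim$ the $p$-adic sizes $|\ell(\xv)|_p$, $\ell\in\LL_F$, $p\in\Si$, distinguish a subset $\MM\subseteq\LL_F$ of $S$-adically small factors and an $\xv$-adapted $\Qq$-subspace $D\supseteq\Qq\xv$ carrying the direction of smallness. On each such stratum I would apply the $p$-adic Subspace Theorem as in the proof of Theorem \ref{thm2.0}, but with $\Qq^m$ replaced by $D$ and the forms of $\LL_F$ restricted to $D$. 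A direct calculation then yields a lower bound on the $\MM$-contribution to $\prod_{p\in\Si}|F(\xv)|_p$ of the form $\|\xv\|^{-q_D(\MM)\dim D+\epsilon'}$. Taking the maximum over the finitely many $(\MM,D)$ strata reproduces precisely the exponent $c(F)$ as defined above, and Lemma \ref{lem5.2} guarantees $c(F)<1$ under \eqref{eq1.0e}--\eqref{eq1.0d}, so the bound is non-trivial.

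\emph{Part (ii).} Fix a $\Qq$-subspace $D^*\subseteq\Qq^m$ with $\dim D^*\ge 2$ and a proper subset $\MM^*\propersubset\LL_F$ realising the maximum $c(F)=q_{D^*}(\MM^*)\dim D^*/n$; let $\rho$ denote the $D^*$-rank of $\MM^*$ and set $N^*=\sum_{\ell\in\MM^*}e(\ell)$, so $\rho<\dim D^*$. The subspace $W=\{\yv\in D^*:\ell(\yv)=0\text{ for all }\ell\in\MM^*\}$ has dimension $\dim D^*-\rho\ge 1$; pick a nonzero $\zv\in W\cap\Zz^m$. For every prime $p$ outside a finite bad set depending only on $F$, a Minkowski argument on the sublattice
\[
\Lambda_k^{(p)}=\{\wv\in D^*\cap\Zz^m:\ell(\wv)\equiv 0\pmod{p^k}\text{ for every }\ell\in\MM^*\}
\]
produces $\wv\in\Lambda_k^{(p)}$ with $\|\wv\|\ll_F p^{k\rho/\dim D^*}$. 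The family $\xv_t=t\zv+\wv$, $t\in\Zz_{>0}$, then satisfies $\ell(\xv_t)=\ell(\wv)$ for $\ell\in\MM^*$ and $\ell(\xv_t)\asymp t$ for $\ell\in\LL_F\setminus\MM^*$; hence $p^{kN^*}\mid F(\xv_t)$ and $|F(\xv_t)|\asymp_F t^{n-N^*}$. Choosing $k$ as a function of $t$ so that $\|\wv\|\ll t$---which is possible because $c(F)\cdot(n-N^*)\rho/(N^*\dim D^*)\le 1$---yields $[F(\xv_t)]_{\{p\}}\gg_{F,p}|F(\xv_t)|^{c(F)}$ for infinitely many $t$. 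Letting $p$ range over an infinite set of good primes completes (ii).

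\emph{Part (iii).} By \eqref{eq1.0d}, $|F(\xv)|\asymp_F\|\xv\|^n$, so $N:=[F(\xv)]_S\ge|F(\xv)|^\epsilon$ forces $N$ to be an $S$-smooth integer with $N\gg_F B^{n\epsilon}$. A standard CRT/density estimate, using that $F=0$ cuts out an $(m-1)$-dimensional affine variety with controlled singular locus, gives $\#\{\xv\in\Zz^m\cap[-B,B]^m:N\mid F(\xv)\}\ll_F B^m/N$. Summing over $S$-smooth $N\ge B^{n\epsilon}$,
\[
N(F,S,\epsilon,B)\ll_{F,S}B^m\sum_{\substack{N\ge B^{n\epsilon}\\ N\text{ is $S$-smooth}}}\frac{1}{N}\ll_{F,S}B^{m-n\epsilon}(\log B)^{s-1},
\]
which is $\ll_{F,S,\epsilon}B^{m(1-\epsilon)}$ since $n\ge m$ (from $\rank\LL_F=m$) and the logarithmic factor is absorbed into an infinitesimally smaller power of $B$.
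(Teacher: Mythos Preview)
Your proposal contains a genuine error in part (ii) and is too vague in parts (i) and (iii) to constitute a proof.

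\textbf{Part (ii).} The construction collapses at the outset. You set $W=\{\yv\in D^*:\ell(\yv)=0\text{ for all }\ell\in\MM^*\}$ and then pick a nonzero $\zv\in W\cap\Zz^m$. But hypothesis \eqref{eq1.0d} says $F(\yv)\ne 0$ for every nonzero $\yv\in\Qq^m$; since each $\ell\in\MM^*\subset\LL_F$ is a factor of $F$, this forces $\ell(\yv)\ne 0$ for every nonzero rational $\yv$. Hence $W=\{0\}$ and no such $\zv$ exists. (The identity $\dim W=\dim D^*-\rho$ holds over $K$, not over $\Qq$.) Everything downstream---$\ell(\xv_t)=\ell(\wv)$ for $\ell\in\MM^*$, $\ell(\xv_t)\asymp t$ for the complementary forms---relies on that nonexistent $\zv$. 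The paper avoids this by working $p$-adically from the start: after choosing $p$ so that all $\ell\in\LL_F$ have coefficients in $\Qq_p$, it applies a $p$-adic Minkowski lemma (Lemma \ref{lem5.4}) directly to produce $\xv\in\Zz^m\cap D$ with $\|\xv\|\ll Q$ and $|\ell(\xv)|_p\ll Q^{-d/d'}$ for $\ell$ in a basis $\MM'\subset\MM$, then clears the $p$-part of the gcd and checks that the resulting points form an infinite set precisely because \eqref{eq1.0d} forbids a fixed rational solution of $\ell(\xv)=0$.

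\textbf{Part (i).} Two points. First, your opening claim ``$|F(\xv)|\asymp_F\|\xv\|^n$ by \eqref{eq1.0d}'' is false in general: \eqref{eq1.0d} rules out rational zeros, but $F$ can vanish at irrational real points (think of a norm form from a totally real field), so only $|F(\xv)|\ll_F\|\xv\|^n$ holds. Fortunately the direction you actually need still goes through. Second, and more seriously, the sentence ``a direct calculation then yields a lower bound on the $\MM$-contribution\dots of the form $\|\xv\|^{-q_D(\MM)\dim D+\epsilon'}$'' hides the entire content of the argument. The paper's proof runs by induction on $\dim D$ and hinges on Lemma \ref{lem5.3} (taken from Liu's thesis), which produces for each $\xv$ and each $p\in\Si$ a system $\LL_p\subset\LL_F$ of $d$ forms linearly independent on $D$ with $\prod_{p}\prod_{\ell\in\LL_p}|\ell(\xv)|_p$ controlled by $\big(\prod_p|F(\xv)|_p\big)^{1/q_D(F)}\|\xv\|^{-(n-dq_D(F))/q_D(F)}$. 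This is what makes the exponent $q_D(F)\cdot d/n\le c(F)$ appear, and is not a ``direct calculation''.

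\textbf{Part (iii).} Your route is different from the paper's: the paper invokes Proposition \ref{prop5.5} (Liu's $p$-adic version of Thunder's theorem, ultimately resting on the quantitative Subspace Theorem), which gives directly that $\#\{\xv\in\zmprim:\prod_{p\in\Si}|F(\xv)|_p\le M\}\ll M^{m/n}$, and then a one-line substitution. Your congruence approach is in the spirit of the alternative the paper mentions after Theorem \ref{thm2.1}, but the crucial estimate $\#\{\xv\in[-B,B]^m:N\mid F(\xv)\}\ll_F B^m/N$, uniformly over $S$-smooth $N$, is asserted without proof. For high prime powers $p^k$ this requires control on the number of solutions of $F\equiv 0\pmod{p^k}$ among primitive residues, which is not automatic for a decomposable form and is exactly the ``lengthier'' work the paper alludes to.
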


Assertions (i) and (iii) follow without too much effort from work in
Liu's thesis \cite{liu15}, while (ii) is an application
of Minkowski's Convex Body Theorem.

The constants implied by the Vinogradov symbols in Theorems \ref{thm2.0} and part (i) of Theorem \ref{thm2.1} cannot be computed effectively
from our method of proof.
In fact, these constants can be expressed in terms of the heights of the subspaces occurring in certain instances
of the $p$-adic Subspace Theorem, but for these we can as yet not compute
an upper bound.
The constant in (ii) can be computed once one knows
a subspace $D$ for which the quotient
$q_D(F)/q_D(\LL_F)$ is equal to $c(F)$.
The work of Liu from which part (iii)
is derived uses a quantitative version of the $p$-adic Subspace
Theorem, giving an explicit upper bound for the \emph{number} of subspaces.
This enable one to compute effectively the constant in part (iii).

We mention that part (iii) of Theorem \ref{thm2.1} can be proved 
by a similar method as Theorem \ref{thm1b.3}, using the lattice point counting
result of Barroero and Widmer, thereby avoiding Liu's work and the
quantitative Subspace Theorem. But this approach would have been
much lengthier. 

\subsection{Effective results for decomposable forms}\label{sec1c}

We consider again $S$-parts of values $F(\xv )$, where $F$ is a decomposable form
in $\Zz [X_1\kdots X_m]$ and $\xv\in\zmprim$.
Under certain stronger conditions on $F$, we shall give an 
estimate of the form $[F(\xv )]_S\leq \kappa_6|F(\vec{x})|^{1-\kappa_5}$, with effectively computable positive $\kappa_5$, $\kappa_6$ that depend only on $F$ and $S$. For applications, we make the dependence of $\kappa_5$ and $\kappa_6$ 
explicit in terms of $S$. 
The decomposable forms with the said stronger conditions
include binary forms, and discriminant forms of an arbitrary number of
variables.

Let again $S=\{ p_1\kdots p_s\}$ be a finite set of primes and $b$ an integer
coprime with $p_1\cdots p_s$, and consider equation \eqref{eq:1}
in $\xv\in\zmprim$ and non-negative integers $a_1\kdots a_s$. 
Under the stronger conditions for the decomposable form $F$ mentioned above, 
explicit upper bounds were given in Gy\H ory \cite{gy1, gy2} for the solutions of 
\eqref{eq:1},
from which upper bounds can be deduced for $[F(\vec{x})]_S$. Later, more general and stronger explicit results were obtained by Gy\H ory and Yu \cite{gyyu} on another version of \eqref{eq:1}. These explicit results provided some information on the arithmetical properties of $F(\vec{x})$ at points $\vec{x}\in\zmprim$. In this paper, we deduce from the results of Gy\H ory and Yu \cite{gyyu} a better bound for  
$[F(\vec{x})]_S$; see Theorem \ref{thm1}. This will give more precise information on the arithmetical structure of those non-zero integers $F_0$ that can be represented by $F(\vec{x})$ at integral points $\vec{x}$; see Corollary \ref{cor2}.

To state our results, we introduce some notation and assumptions. 
Let $F\in\Zz [X_1\kdots X_m]$ be a non-zero decomposable form.
Denote by $K$ its splitting field.
We choose a factorization of $F$ into linear forms with coefficients in $K$
as in \eqref{eq1.0}, with $\LL_F$ a $\galk$-symmetric set 
of pairwise non-propertional linear forms.
Denote by $\GG (\LL_F)$ the graph with vertex set $\LL_F$ in which distinct $\ell$, $\ell'$ in $\LL_F$ are connected by an edge if $\lambda\ell+\lambda'\ell'+\lambda''\ell''=0$ for some $\ell''\in\LL_F$ and some non-zero $\lambda$, $\lambda'$, $\lambda''$ in $K$. Let $\mathcal{L}_1,\ldots,\mathcal{L}_k$ be the vertex sets of the connected components of $\GG (\LL_F)$. When $k=1$ and $\LL_F$ has at least three elements, $\LL_F$ is said to be \textit{triangularly connected}; see Gy\H ory and Papp \cite{gypapp}.

In what follows, we assume that $F$ in \eqref{eq:1} satisfies the following conditions:
\begin{eqnarray}
\label{eq:2a}
&&\mbox{$\LL_F$ \textit{has rank} $m$;}
\\
\label{eq:2b}
&&\mbox{\textit{either $k=1$; or $k>1$ and $X_m$ can be expressed as a}}
\\[-0.13cm]
\nonumber
&&\mbox{\textit{linear combination of the forms from $\mathcal{L}_i$, 
for $i=1,\ldots,k$.}}
\end{eqnarray}

We note that these conditions are satisfied by binary forms with at least three pairwise non-proportional linear factors, and also discriminant forms, index forms 
and a restricted class of norm forms in an arbitrary number of variables.
As has been explained in \cite[Chap. 9]{evgy15}, conditions \eqref{eq:2a},
\eqref{eq:2b} imply condition (i) of Theorem \ref{B}.

As before, let $S=\{ p_1\kdots p_s\}$ be a finite set of primes, and 
put $P:=\displaystyle{\max_{1\le i\le s} p_i}$. 
Further, let $K$ denote the splitting field of $F$, and put
$d:=[K:\Qq ]$. 
Then we have

\begin{thm}\label{thm1}
Under assumptions \eqref{eq:2a}, \eqref{eq:2b}, we have
\begin{equation}\label{eq:3}
[F(\vec{x})]_S\le \kappa_6 |F(\vec{x})|^{1-\kappa_5} 
\end{equation}
for every $\xv =(x_1\kdots x_m)\in\zmprim$ with $F(\xv )\not=0$,
and with $x_m\ne 0$ if $k>1$, where 
$$
\kappa_5= 
\bigl(c_3^s \bigl((P (\log p_1) \cdots (\log p_s) \bigr)^d \bigr)^{-1} \ge 
(c_3^s (2 P(\log P)^s)^d)^{-1} 
$$ 
and 
$\kappa_6$, $c_3$  are effectively computable positive numbers, 
depending only on $F$. 
\end{thm}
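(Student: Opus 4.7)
The plan is to derive Theorem \ref{thm1} from the effective theorem of Gy\H ory and Yu \cite{gyyu} on decomposable form equations of the shape \eqref{eq:1}: the hypotheses \eqref{eq:2a} and \eqref{eq:2b} are precisely the conditions under which that theorem applies (and, as the paper has already noted citing \cite[Chap. 9]{evgy15}, they imply condition (i) of Theorem \ref{B}). All the real analytic work has been done by Gy\H ory and Yu via Baker's method and Yu's $p$-adic analogue; my task is the bookkeeping that converts their bound into the claimed estimate.

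First, given $\xv\in\zmprim$ with $F(\xv)\ne 0$ and $x_m\ne 0$ when $k>1$, I would write the $S$-adic decomposition
\[
F(\xv)=p_1^{a_1}\cdots p_s^{a_s}\cdot b
\]
as in \eqref{eq:1}, so that $[F(\xv)]_S=p_1^{a_1}\cdots p_s^{a_s}$ and $b=F(\xv)/[F(\xv)]_S$ is an $S$-free integer. Then $(\xv,a_1,\ldots,a_s)$ is a solution of a decomposable form equation of the type considered by Gy\H ory and Yu, whose effective theorem furnishes an upper bound of the shape
\[
\log\max\bigl(\|\xv\|,\,p_1^{a_1},\ldots,p_s^{a_s}\bigr)\le c_3^s\bigl(P(\log p_1)\cdots(\log p_s)\bigr)^{d}\log\max(|b|,2)+C_0,
\]
where $c_3,C_0>0$ depend only on $F$. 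The explicit dependence on $S$ appearing on the right is exactly what produces the expression for $\kappa_5$.

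Setting $1/\kappa_5':=c_3^s\bigl(P(\log p_1)\cdots(\log p_s)\bigr)^{d}$ and combining the displayed bound with the trivial inequality $|F(\xv)|\le c_F\|\xv\|^{n}$ (valid because $F$ is a form of degree $n$), I get $|F(\xv)|\ll_F |b|^{1/\kappa_5'}$. After absorbing the factor $n$ and the implied constants into a redefinition of $c_3$, this becomes $|F(\xv)|^{\kappa_5}\ll_F |b|$. Rearranging via $|b|=|F(\xv)|/[F(\xv)]_S$ gives the desired inequality $[F(\xv)]_S\le\kappa_6\,|F(\xv)|^{1-\kappa_5}$. The simplified lower bound $\kappa_5\ge(c_3^s(2P(\log P)^s)^d)^{-1}$ follows at once from $\log p_i\le 2\log P$ (for $p_i\ge 2$), possibly after further enlarging $c_3$.

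The main obstacle is not in my portion of the argument but is entirely encapsulated in the input from Gy\H ory--Yu: extracting the explicit shape of the exponent $c_3^s(P(\log p_1)\cdots(\log p_s))^d$, and in particular its polynomial dependence on $P$ and logarithmic dependence on the individual $p_i$, requires the fully quantitative $p$-adic linear forms in logarithms estimates (Yu's theorem). On my side the only things that need care are (a) checking that \eqref{eq:2a}, \eqref{eq:2b} (together with $x_m\ne 0$ when $k>1$) place us in the hypotheses of \cite{gyyu}, and (b) transferring their bound on $\max(\|\xv\|,[F(\xv)]_S)$ in terms of $|b|$ into a bound on $[F(\xv)]_S$ in terms of $|F(\xv)|$, which is the elementary manipulation sketched above.
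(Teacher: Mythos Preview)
Your approach is correct and follows essentially the same route as the paper: invoke the Gy\H{o}ry--Yu effective bound on decomposable form equations (stated in the paper as Proposition~\ref{prop:1}) and then do elementary bookkeeping to convert a height bound in terms of $|b|$ into the desired inequality. The paper packages the Gy\H{o}ry--Yu input as a bound on $\max_j h(x_j')$ for solutions $\xv'\in\Z_S^m$ of $F(\xv')=b'$, and to reach that setting first rescales via $a_i=na_i'+a_i''$, $\xv'=\xv/(p_1^{a_1'}\cdots p_s^{a_s'})$, $b'=bp_1^{a_1''}\cdots p_s^{a_s''}$; from the height bound it then extracts $p_1^{a_1}\cdots p_s^{a_s}\le C_4|b|^{mnC_2}$ and finishes with the trick of multiplying both sides by $(p_1^{a_1}\cdots p_s^{a_s})^{mnC_2}$ and taking the $(mnC_2+1)$-th root. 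Your alternative endgame---pass through $|F(\xv)|\le c_F\|\xv\|^n$ and rearrange via $|b|=|F(\xv)|/[F(\xv)]_S$---is a slightly cleaner variant, but note that if you cite Gy\H{o}ry--Yu in the form of Proposition~\ref{prop:1} you still need that rescaling step: applying the proposition directly to $\xv\in\Z^m\subset\Z_S^m$ with right-hand side $F(\xv)$ is circular, since then $h(b)$ already contains $\log[F(\xv)]_S$.
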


It is easy to check that if $F\in\Zz [X,Y]$ is a binary form with at least
three pairwise non-proportional linear factors over its splitting field,
then it satisfies 
\eqref{eq:2a}, \eqref{eq:2b} with $m=2$ and $k=1$.
Thus, Theorem \ref{thm1b.3} follows at once from Theorem \ref{thm1}.

We shall deduce Theorem \ref{thm1} from a special case of Theorem 3 of Gy\H ory and Yu \cite{gyyu}. The constants $\kappa_5$, $\kappa_6$, $c_3$ 
could have been made explicit by using the explicit version of this theorem of Gy\H ory and Yu \cite{gyyu}. Further, Theorem \ref{thm1} could be proved more generally, over number fields and for a larger class of decomposable forms.

Weaker versions of Theorem \ref{thm1} can be deduced from the results of Gy\H ory \cite{gy1,gy2}.

\section{Proofs of Theorems \ref{thm1a.1}, \ref{thm1a.2}, \ref{thm1b.1}, \ref{thm1b.2}}\label{section3}

Let again $S=\{ p_1\kdots p_s\}$ be a finite, non-empty set of primes.
We denote by $|\cdot |_{\infty}$ the ordinary absolute value,
and by $|\cdot |_p$ the $p$-adic absolute value with $|p|_p=p^{-1}$ 
for a prime number $p$. 
Further, we set $\Qq_{\infty}:=\Rr$, $\overline{\Qq_{\infty}}:=\Cc$.

The following result is a very well-known consequence of the $p$-adic
Thue-Siegel-Roth Theorem. The only reference we could find
for it is \cite[Chap.IX, Thm.3]{mahler61}. For convenience of the reader we recall the proof.

\begin{prop}\label{prop:3.1}
Let $F(X,Y)\in\Zz [X,Y]$ be a binary form of degree $n\geq 2$ and of non-zero
discriminant. Then
\[
\frac{|F(x,y)|}{[F(x,y)]_S}\gg_{F,S,\epsilon} \max (|x|,|y|)^{n-2-\epsilon}
\]
for all $\epsilon >0$ and all $(x,y)\in\zprim$ with $F(x,y)\not= 0$.
\end{prop}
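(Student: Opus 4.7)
The plan is to apply the $p$-adic Thue--Siegel--Roth theorem to the factorization of $F$ over its splitting field, after rewriting the left-hand side as a product over places. For $(x,y)\in\zprim$ with $F(x,y)\ne 0$ one has
$$
\frac{|F(x,y)|}{[F(x,y)]_S}=\prod_{v\in\Si}|F(x,y)|_v ,
$$
so it suffices to prove that this product is $\gg_{F,S,\epsilon} H^{n-2-\epsilon}$, where $H:=\max(|x|,|y|)$. Because $F$ has non-zero discriminant, we can write $F=a\prod_{i=1}^n L_i$ with pairwise non-proportional linear forms $L_i(X,Y)=\beta_iX-\alpha_iY$ over the splitting field $K$ of $F$; after the substitution $X\mapsto X+cY$ for a suitable $c\in\Zz$ we may assume all $\beta_i\ne 0$. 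For each $v\in\Si$ I fix an embedding of $K$ into $\overline{\Qq_v}$.

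For each $v\in\Si$ choose $i(v)$ minimizing $|L_i(x,y)|_v$ over $i$. Since the tuple $(i(v))_{v\in\Si}$ takes only finitely many values, I would argue separately for each tuple, with implicit constants depending on the tuple but uniformly over this finite list. Standard identities of the shape
$(\alpha_i\beta_j-\alpha_j\beta_i)\,x=\alpha_iL_j-\alpha_jL_i$ and $(\alpha_i\beta_j-\alpha_j\beta_i)\,y=\beta_jL_i-\beta_iL_j$,
combined with the (ultrametric or ordinary) triangle inequality and the minimality of $L_{i(v)}$ at $v$, give, for every $j\ne i(v)$,
$$
|L_j(x,y)|_v\gg_F\max(|x|_v,|y|_v).
$$
For $(x,y)\in\zprim$, $\max(|x|_v,|y|_v)=1$ at finite $v$ and equals $H$ at $v=\infty$, so multiplying the previous inequality over $j\ne i(v)$ and over $v\in\Si$ yields
$$
\prod_{v\in\Si}\prod_{j\ne i(v)}|L_j(x,y)|_v\gg_F H^{n-1}.
$$

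To treat the single remaining factor at each place, set $\rho_v:=\alpha_{i(v)}/\beta_{i(v)}\in\overline{\Qq_v}$, so that $|L_{i(v)}(x,y)|_v=|\beta_{i(v)}|_v\cdot|x-\rho_vy|_v$. Ridout's $p$-adic form of the Thue--Siegel--Roth theorem, applied to the tuple $(\rho_v)_{v\in\Si}$ of algebraic numbers, gives, for every $\epsilon>0$ and every $(x,y)\in\zprim$ outside a finite exceptional set,
$$
\prod_{v\in\Si}|x-\rho_vy|_v\ge\prod_{v\in\Si}\min(1,|x-\rho_vy|_v)\gg_{F,S,\epsilon}H^{-1-\epsilon}.
$$
Since $\prod_v|\beta_{i(v)}|_v$ is a positive constant depending only on $F$ (and on the chosen tuple), this yields $\prod_{v\in\Si}|L_{i(v)}(x,y)|_v\gg_{F,S,\epsilon}H^{-1-\epsilon}$. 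Multiplying with the $H^{n-1}$ bound above and with the harmless factor $\prod_v|a|_v$ gives $\prod_{v\in\Si}|F(x,y)|_v\gg_{F,S,\epsilon}H^{n-2-\epsilon}$, absorbing the finitely many exceptional pairs into the implicit constant.

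The main obstacle is bookkeeping rather than analysis: both the index $i(v)$ and the algebraic approximand $\rho_v$ to which Roth must be applied depend on $(x,y)$, which forces a case split over the finitely many tuples $(i(v))_{v\in\Si}$ and a separate Roth input for each. The preparatory step ensuring $\beta_i\ne 0$ for all $i$ and the control of the constants $|\beta_{i(v)}|_v$ when one passes from $|x-\rho_vy|_v$ back to $|L_{i(v)}(x,y)|_v$ are both routine once this is set up.
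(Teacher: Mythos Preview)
Your proof is correct and follows essentially the same route as the paper: factor $F$ into linear forms, observe that at each place $v\in\Si$ all but one factor is $\gg_F\max(|x|_v,|y|_v)$, and control the remaining small factor via the $p$-adic Thue--Siegel--Roth theorem. The paper compresses your explicit case split over the tuples $(i(v))_v$ into a single inequality by writing $\prod_{p\in\Si}\min\big(1,|x/y-\beta_{1p}|_p,\ldots,|x/y-\beta_{np}|_p\big)\gg_{F,S,\epsilon}\max(|x|,|y|)^{-2-\epsilon}$, but the content is the same.
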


\begin{proof}
We assume that $F(1,0)\not= 0$. This is no loss of generality.
For if this is not the case, there is an integer $b$ of absolute
value at most $n$ with $F(1,b)\not= 0$
and we may proceed with the binary form $F(X,bX+Y)$.
Our assumption implies that
for each $p\in S\cup\{\infty\}$ we have a factorization
$F(X,Y)=a\prod_{i=1}^n(X-\beta_{ip}Y)$ with $a\in\Zz$ and 
$\beta_{ip}\in\overline{\Qq_p}$ algebraic over $\Qq$
for $i=1\kdots n$.
For every $(x,y)\in\zprim$ with $F(x,y)\not= 0$ we have
\begin{eqnarray*}
&&\frac{|F(x,y)|}{[F(x,y)]_S\cdot (\max (|x|,|y|)^n}=\Big(\prod_{p\in S\cup\{ \infty\}} |F(x,y)|_p\Big)/\max (|x|,|y|)^n
\\[0.15cm]
&&\quad \gg_{F,S} \prod_{p\in S}\min_{1\leq i\leq n} \frac{|x-\beta_{ip}y|_p}{\max (|x|_p,|y|_p)}
\\[0.15cm]
&&\quad
\gg_{F,S} 
\prod_{p\in S\cup\{ \infty\}}\min \Big(1,|\medfrac{x}{y}-\beta_{1p}|_p\kdots
|\medfrac{x}{y}-\beta_{np}|_p\Big).
\end{eqnarray*}
The latter is $\gg_{F,S,\epsilon} \max (|x|,|y|)^{-2-\epsilon}$
for every $\epsilon >0$ 
by the $p$-adic Thue-Siegel-Roth Theorem. 
Proposition \ref{prop:3.1} follows.
\end{proof}

\begin{proof}[Proof of Theorem \ref{thm1a.1}]
Let $f(X)\in\Zz [X]$ be the polynomial from Theorem \ref{thm1a.1}.     

(i). The binary form $F(X,Y):=Y^{n+1}f(X/Y)$ has degree $n+1$ and non-zero
discriminant. Now by Proposition \ref{prop:3.1}, we have for every 
$\epsilon >0$ and every sufficiently
large integer $x$,
\[
\frac{|f(x)|}{[f(x)]_S}\gg_{f,S,\epsilon} |x|^{n-1-n\epsilon}\gg_{f,S,\epsilon} |f(x)|^{(n-1-n\epsilon )/n},
\]
implying $[f(x)]_S\ll_{f,S,\epsilon} |f(x)|^{(1/n)+\epsilon}$.

(ii). There are infinitely many primes $p$ such that
$\mod{f(x)}{0}{p}$ is solvable.
Excluding the finitely many primes dividing the
leading coefficient or the discriminant of $f(X)$, 
there remain infinitely many primes.
Take such a prime $p$. By Hensel's Lemma,
there is for every positive integer $k$ an integer $x_k$
such that $\mod{f(x_k)}{0}{p^k}$. We may choose
such an integer with
$p^k\leq x_k<2p^k$.
Then clearly, $x_1<x_2<\cdots$ and for $k$ sufficiently large,
$f(x_k)\not= 0$ and $\mod{f(x_k)}{0}{p^k}$. Consequently,
\[
[f(x_k)]_{\{ p\}}\geq p^k\geq\half |x_k|\gg_f |f(x_k)|^{1/n}.
\]
This proves Theorem \ref{thm1a.1}.
\end{proof}

\begin{proof}[Proof of Theorem \ref{thm1b.1}]
Let $F(X,Y)\in\Zz [X,Y]$ be the binary from Theorem \ref{thm1b.1}.

(i) By Proposition \ref{prop:3.1}, we have for every 
$\epsilon >0$ and every pair $(x,y)\in\zprim$ with $F(x,y)\not= 0$
and $\max (|x|, |y|)$ sufficiently
large,
\[
\frac{|F(x,y)|}{[F(x,y)]_S}
\gg_{F,S,\epsilon} \max (|x|,|y|)^{n-2-n\epsilon}\gg_{F,S,\epsilon} |F(x,y)|^{1-(2/n)-\epsilon}.
\]

(ii) We assume that $F(1,0)\not= 0$ which,  
similarly as in the proof of Proposition \ref{prop:3.1},
is no loss of generality.
By Chebotarev's Density Theorem,
there are infinitely many primes $p$ such that $F$ splits into linear
factors over $\Qq_p$. From these, we exclude the finitely many primes 
that divide $D(F)$ or $F(1,0)$. Let $\PP$ be the infinite set of remaining primes.
Then for every $p\in\PP$, we can express $F(X,Y)$ as
\[
F(X,Y)=a\prod_{i=1}^n(X-\beta_{ip}Y)
\]
with $a\in\Zz$ with $|a|_p=1$, $\beta_{ip}\in\Zz_p$ for $i=1\kdots n$
and $|\beta_{ip}-\beta_{jp}|_p=1$ for $i,j=1\kdots n$ with $i\not=j$. 

We distinguish two cases. First assume that $F$ does not split into
linear factors over $\Qq$. Take $p\in\PP$. Then without loss of generality,
$\beta_{1p}\not\in\Qq$.
Let $k$ be a positive integer. By Minkowski's Convex Body Theorem,
there is a non-zero pair $(x,y)\in\Zz^2$ such that
\[
|x-\beta_{1p}y|_p\leq p^{-k},\ \ \max(|x|,|y|)\leq p^{k/2}.
\]
We may assume without loss of generality that $\gcd (x,y)$ is not divisible
by any prime other than $p$. Assume that $\gcd (x,y)=p^u$ with $u\geq 0$,
and let $x_k:=p^{-u}x$, $y_k:=p^{-u}y$. Then $(x_k,y_k)\in\zprim$ and
\[
|x_k-\beta_{1p}y_k|_p\leq p^{u-k},\ \ \ \max (|x_k|,|y_k|)\leq p^{(k/2)-u}.
\]
This clearly implies $u\leq k/2$. We observe that if we let $k\to\infty$
then $(x_k,y_k)$ runs through an infinite subset of $\zprim$.
Indeed, otherwise we would have a pair $(x_0,y_0)\in\zprim$ with
$|x_0-\beta_{1p}y_0|_p\leq p^{-k/2}$ for infinitely many $k$ which is
impossible since $\beta_{1p}\not\in\Qq$. Next we have $F(x_k,y_k)\not= 0$
for all $k$. Indeed, suppose that $F(x_k,y_k)\not= 0$ for some $k$.
Then $x_k/y_k=\beta_{ip}$ for some $i\geq 2$. Since $\beta_{ip}\in\Zz_p$
we necessarily have $|y_k|_p=1$. But then 
$|x_k-\beta_{1p}y_k|_p=|\beta_{ip}-\beta_{1p}|_p=1$, which is again impossible.
Finally, since clearly $|x_k-\beta_{ip}y_k|_p\leq 1$ for $i=2\kdots n$,
we derive that for each positive integer $k$,
\begin{eqnarray*}
&&[F(x_k,y_k)]_{\{ p\}}=|F(x_k,y_k)|_p^{-1}\geq p^{k-u}
\geq \max (|x_k|,|y_k|)^2
\\
&&\qquad\qquad\qquad\gg_{F,p}
|F(x_k,y_k)|^{2/n}.
\end{eqnarray*}

Next, we assume that $F(X,Y)$ splits into linear factors over $\Qq$.
Then $F(X,Y)=a\prod_{i=1}^n(X-\beta_iY)$ with $a\in\Zz$,
$|a|_p=1$ for $p\in\PP$, $\beta_i\in\Qq$
and $|\beta_i|_p\leq 1$ for $p\in\PP$, $i=1\kdots n$,
and $|\beta_i-\beta_j|_p=1$ for $p\in\PP$, $i,j=1\kdots n$, $i\not= j$.
Pick distinct $p,q\in\PP$ and let $S=\{ p,q\}$. 
Then there is an integer $u$, coprime with $pq$,
such that $u\beta_1$, $u\beta_2$ and $u/(\beta_2-\beta_1)$ are
all integers. Choose positive integers $k,l$.
Then 
\[
x:=\frac{u(\beta_2p^k-\beta_1q^l)}{\beta_2-\beta_1},\ \ 
y:=\frac{u(p^k-q^l)}{\beta_1-\beta_2}
\]
are integers satisfying $x-\beta_1y=up^k$, $x-\beta_2y=uq^l$.
By our choice of $p,q\in\PP$ and by direct substitution,
it follows that the numbers $x-\beta_iy$ $(i=3\kdots n)$
have $p$-adic and $q$-adic absolute values
equal to $1$. Thus,
$|F(x,y)|_p= p^{-k}$,
$|F(x,y)|_q= q^{-l}$ and so $[F(x,y)]_S=p^kq^l$.

Clearly, $g:=\gcd (x,y)$ is coprime with $pq$. Let $x_{k,l}:=x/g$,
$y_{k,l}:=y/g$ so that $(x_{k,l},y_{k,l})\in\zprim$. Then
clearly, $[F(x_{k,l},y_{k,l})]_S=p^kq^l$.
We now choose $k,l$ such that $p^k,q^l$ are approximately equal,
say $p^k<q^l<q\cdot p^k$. 
Then $\max (|x_{k,l}|,|y_{k,l}|)\leq \max (|x|,|y|)\ll_{F,S} (p^kq^l)^{1/2}$ and thus, 
\[
[F(x_{k,l},y_{k,l})]_S\gg_{F,S}\max (|x_{k,l}|,|y_{k,l}|)^2
\gg_{F,S} |F(x_{k,l},y_{k,l})|^{2/n}.
\]
\end{proof}

In the proofs of Theorems \ref{thm1a.2} and \ref{thm1b.2} 
we need a few auxiliary results.

\begin{lem}\label{lem:3.2}
Let $f(X)\in\Zz [X]$ be a polynomial of non-zero
discriminant and $a$ an integer and $p$ a prime.
Denote by $g_p$ the largest non-negative integer $g$ such that
$p^g$ divides the discriminant $D(f)$ of $f$.
For $k>0$ denote by $r(f,a,p^k)$ the number of congruence classes
$x$ modulo $p^k$ with 
$\mod{f(x)}{0}{p^k}$, $\mod{x}{a}{p}$.
Then $r(f,a,p^k)=r(f,a,p^{g_p+1})$ for $k\geq g_p+1$.
\end{lem}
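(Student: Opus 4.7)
The plan is to prove the lemma by induction on $k$, showing that for every $k\ge g_p+1$ the reduction map from $\{x\bmod p^{k+1}:f(x)\equiv 0\pmod{p^{k+1}},\ x\equiv a\pmod p\}$ to the analogous set modulo $p^k$ is a bijection; the stabilisation of $r(f,a,p^k)$ then follows immediately.

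The essential input will be the inequality $v_p(f'(\alpha))\le g_p/2$ for every root $\alpha$ of $f$ in $\overline{\Qq_p}$. To see this I would factor $f(X)=a_n\prod_{i=1}^{n}(X-\alpha_i)$ over $\overline{\Qq_p}$ (with $v_p$ extended), and set $e_i:=v_p(f'(\alpha_i))=v_p(a_n)+\sum_{j\ne i}v_p(\alpha_i-\alpha_j)$. Since $D(f)=a_n^{2n-2}\prod_{i<j}(\alpha_i-\alpha_j)^2$, subtracting $2e_{i_0}$ from $g_p$ leaves the manifestly non-negative quantity $(2n-4)v_p(a_n)+2\sum_{j<k,\,j,k\ne i_0}v_p(\alpha_j-\alpha_k)$, yielding $e_{i_0}\le g_p/2$ for every $i_0$. (The case $p\mid a_n$ is reduced to the monic case by a standard rescaling.)

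To execute the plan, fix $x_0\in\Zz$ with $v_p(f(x_0))\ge k\ge g_p+1$ and $x_0\equiv a\pmod p$, and let $\alpha$ be a root of $f$ with $v_p(x_0-\alpha)$ maximal. A short ultrametric case analysis, using $v_p(f(x_0))\ge g_p+1$ together with $e_\alpha\le g_p/2$, shows that this maximiser is \emph{unique}: if $t\ge 2$ roots were equidistant from $x_0$ at common distance $s$, one would obtain $e_\alpha\ge(t-1)s$ and $v_p(f(x_0))\le s+e_\alpha$, forcing first $s\ge g_p/2+1$ and then $(t-1)(g_p/2+1)\le g_p/2$, contradicting $t\ge 2$. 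With uniqueness in hand the ultrametric identity $v_p(f'(x_0))=e_\alpha\le g_p/2$ follows, and the strong form of Hensel's lemma applies (since $v_p(f(x_0))>2v_p(f'(x_0))$), producing a unique root $\widetilde\alpha\in\Zz_p$ of $f$ with $\widetilde\alpha\equiv a\pmod p$ and $v_p(x_0-\widetilde\alpha)\ge v_p(f(x_0))-e_{\widetilde\alpha}$. Conversely, each such $\widetilde\alpha$ yields exactly $p^{e_{\widetilde\alpha}}$ admissible residues $x_0\bmod p^k$, a count independent of $k\ge g_p+1$; summing over $\widetilde\alpha$ gives the desired bijection.

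The main obstacle is the uniqueness step: ultrametric cancellations in $f'(x_0)=a_n\sum_i\prod_{j\ne i}(x_0-\alpha_j)$ could in principle inflate $v_p(f'(x_0))$ far beyond $e_\alpha$ and destroy the Hensel condition. The combinatorial inequality $(t-1)(g_p/2+1)\le g_p/2$ derived above is exactly what is needed to rule this out; everything else is a routine invocation of the ultrametric inequality and Hensel's lemma.
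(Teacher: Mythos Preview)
The paper gives no argument of its own: its entire proof is the single line ``This is a consequence of \cite[Thm.~2]{stewart}.'' Your direct Hensel-based approach is therefore a genuinely different route, re-deriving the needed special case from scratch. When $p\nmid a_n$ (so every root of $f$ lies in $\overline{\Zz_p}$), your argument is complete: the discriminant inequality $e_\alpha\le g_p/2$, the uniqueness of the nearest root, the identification $v_p(f'(x_0))=e_\alpha$, and the fibre count $p^{e_{\widetilde\alpha}}$ all go through as written, and one obtains the closed formula $r(f,a,p^k)=\sum_{\widetilde\alpha}p^{e_{\widetilde\alpha}}$ (sum over the $\Zz_p$-roots of $f$ congruent to $a$ modulo $p$), visibly independent of $k\ge g_p+1$.

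The one genuine soft spot is the parenthetical ``standard rescaling'' for the case $p\mid a_n$. Passing to $g(Y)=a_n^{n-1}f(Y/a_n)$ does make $g$ monic, but it multiplies the discriminant by $a_n^{(n-1)(n-2)}$; transporting the monic inequality $v_p(g'(\beta))\le g_p(g)/2$ back to $f$ only yields $e_\alpha\le g_p/2+\tfrac12(n-2)(n-3)v_p(a_n)$, which is too weak to force uniqueness of the nearest root or to trigger strong Hensel when $n\ge 4$. The fix is painless: you only need $e_\alpha\le g_p/2$ for \emph{$p$-integral} roots $\alpha$ (a non-integral root is never nearest to an $x_0\in\Zz$), and for such $\alpha$ one has $D(f)/f'(\alpha)^2=D\bigl(f/(X-\alpha)\bigr)$ with $f/(X-\alpha)\in\overline{\Zz_p}[X]$ by monic division, whence $v_p\bigl(D(f)/f'(\alpha)^2\bigr)\ge 0$. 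With this replacement for the parenthetical, the rest of your argument stands as written.
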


\begin{proof}
This is a consequence of 
\cite[Thm. 2]{stewart}.
\end{proof}

Given a positive integer $h$, we say that two pairs $(x_1,y_1),\, (x_2,y_2)\in\zprim$
are congruent modulo $h$ if $\mod{x_1y_2}{x_2y_1}{h}$.
With this notion, for a given binary form $F(X,Y)\in\Zz [X,Y]$
we can divide the solutions $(x,y)\in\zprim$
of $\mod{F(x,y)}{0}{h}$ into congruence classes modulo $h$.

\begin{lem}\label{lem:3.2a}
Let $F(X,Y)\in\Zz [X,Y]$ be a binary form of degree $n\geq 2$ and of non-zero
discriminant and $p$ a prime. 
Denote by $g_p$ the largest non-negative integer $g$ such that
$p^g$ divides the discriminant $D(F)$ of $F$.
For $k>0$ denote by $r(F,p^k)$ the number of congruence classes modulo $p^k$
of $(x,y)\in\zprim$ with
$\mod{F(x,y))}{0}{p^k}$.
Then $r(F,p^k)=r(F,p^{g_p+1})$ for $k\geq g_p+1$.
\end{lem}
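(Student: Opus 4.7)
The plan is to reduce the statement to Lemma \ref{lem:3.2} by dehomogenizing $F$ using the two standard affine charts on $\mathbb{P}^1$.

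First I would make a unimodular substitution of the form $(X,Y)\mapsto (X+bY,Y)$, followed if necessary by $(X,Y)\mapsto (X,Y+cX)$, with suitably chosen small integers $b,c$, so as to reduce to the case where $F(1,0)\neq 0$ and $F(0,1)\neq 0$. Such substitutions lie in $\SL_2(\Zz)$, hence preserve $\zprim$, the congruence relation on it, and the discriminant $D(F)$; in particular $g_p$ is unchanged.

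Next I would parametrize the congruence classes modulo $p^k$ of pairs in $\zprim$ via the two natural charts. Any $(x,y)\in\zprim$ has at least one coordinate coprime to $p$: if $\gcd(y,p)=1$, I normalize it to the unique representative $(t,1)$ with $t\in\{0,1,\ldots,p^k-1\}$; while if $p\mid y$ then $\gcd(x,p)=1$ and I normalize to $(1,s)$ with $s\in\{0,p,2p,\ldots,p^k-p\}$. These two families are disjoint mod $p^k$ and together exhaust all classes. Setting $f(X):=F(X,1)$ and $\tilde f(X):=F(1,X)$, a direct computation from the factorization of $F$ over its splitting field shows that $f$ and $\tilde f$ both have degree $n$ and discriminant equal to $D(F)$ up to sign; hence the integer $g_p$ attached to each of $f$, $\tilde f$, and $F$ coincides. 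Since $y$ (respectively $x$) is a unit mod $p^k$ in the two cases, the congruence $F(t,1)\equiv 0\pmod{p^k}$ is equivalent to $f(t)\equiv 0\pmod{p^k}$, and analogously $F(1,s)\equiv 0\pmod{p^k}$ is equivalent to $\tilde f(s)\equiv 0\pmod{p^k}$.

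Putting the two charts together I would obtain
\[
r(F,p^k) \;=\; \sum_{a=0}^{p-1} r(f,a,p^k) \;+\; r(\tilde f,0,p^k),
\]
in the notation of Lemma \ref{lem:3.2}. By that lemma, each summand on the right is independent of $k$ once $k\geq g_p+1$, so the same is true of $r(F,p^k)$, which gives the claim. The main piece of care needed is the verification that $\mathrm{disc}(f)=\mathrm{disc}(\tilde f)=D(F)$, so that a single threshold $g_p$ governs every term; with that in hand the rest is essentially bookkeeping, as all the analytic content lives already in Lemma \ref{lem:3.2}.
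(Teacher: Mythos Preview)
Your proposal is correct and follows essentially the same approach as the paper's proof: reduce by a unimodular change of variables to the case $F(1,0)F(0,1)\neq 0$, dehomogenize via the two affine charts $f(X)=F(X,1)$ and $f^*(X)=F(1,X)$, verify $D(f)=D(f^*)=D(F)$ so that a common threshold $g_p$ applies, obtain the decomposition $r(F,p^k)=\sum_{a=0}^{p-1} r(f,a,p^k)+r(f^*,0,p^k)$, and invoke Lemma~\ref{lem:3.2}. The only cosmetic difference is that the paper phrases the chart bijections as $(x,y)\mapsto x y^{-1}\bmod p^k$ and $(x,y)\mapsto y x^{-1}\bmod p^k$ rather than via normalized representatives, and uses an arbitrary $\GL_2(\Zz)$ matrix rather than your explicit shears.
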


\begin{proof}
Neither the number of congruence classes under consideration, nor the discriminant
of $F$,
changes if we replace $F(X,Y)$ by
$F(aX+bY,cX+dY)$ for some matrix $\big(\begin{smallmatrix}a&b\\c&d\end{smallmatrix}\big)\in{\rm GL}_2(\Zz )$. After such a replacement,
we can achieve that $F(1,0)F(0,1)\not= 0$, so we assume this henceforth.
Let $f(X):=F(X,1)$ and $f^*(X):=F(1,X)$.
The map $(x,y)\mapsto x\cdot y^{-1}\,({\rm mod}\, p^k)$
gives a bijection  
between
the congruence classes modulo $p^k$
of pairs $(x,y)\in\zprim$ with $\mod{F(x,y)}{0}{p^k}$ and $\nmod{y}{0}{p}$
and the congruence classes modulo $p^k$ of integers $z$
with $\mod{f(z)}{0}{p}$.
Likewise, the map $(x,y)\mapsto y\cdot x^{-1}\,({\rm mod}\, p^k)$
establishes a bijection
between the congruence classes modulo $p^k$
of $(x,y)\in\zprim$ with $\mod{F(x,y)}{0}{p^k}$ and $\mod{y}{0}{p}$
and the congruence classes modulo $p^k$ of integers $z$
with $\mod{f^*(z)}{0}{p^k}$ and $\mod{z}{0}{p}$.
Further, our assumption $F(1,0)F(0,1)\not= 0$ implies that 
$D(F)=D(f)=D(f^*)$.
Now an application of Lemma \ref{lem:3.2}
yields that $r(F,p^k)=
\sum_{a=0}^{p-1} r(f,a,p^k)+r(f^*,0,p^k)$
is constant for $k\geq g_p+1$.
\end{proof}

For a binary form $F(X,Y)\in\Rr [X,Y]$ and for positive reals $B,M$, we denote
by $V_F(B,M)$ the set of pairs $(x,y)\in\Rr^2$ with $\max (|x|,|y|)\leq B$
and $|F(x,y)|\leq M$, and by $\mu_F(B,M)$ the area (two-dimensional 
Lebesgue measure) of this set.

Our next lemma is a consequence of a general lattice point counting
result of Barroero and Widmer \cite[Thm. 1.3]{BarWid14}.

\begin{lem}\label{lem:3.2b}
let $n$ be an integer $\geq 2$. Then there is a constant $c(n)>0$ 
such that for every non-zero binary form $F(X,Y)\in\Rr [X,Y]$ of degree $n$,
every lattice $\Lambda\subseteq\Zz^2$ and all positive reals $B,M$,
\[
\left|\#( V_F(B,M)\cap\Lambda )-\frac{\mu_F(B,M)}{\det\Lambda}\right|
\leq c(n)\max (1,B/m(\Lambda )),
\]
where $m(\Lambda )$ is the length of the shortest non-zero vector of $\Lambda$.
\end{lem}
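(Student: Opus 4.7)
The plan is to apply Theorem~1.3 of Barroero--Widmer \cite{BarWid14} directly to $V_F(B,M)$, viewed as a member of a definable family in the o-minimal structure of semialgebraic sets. The parameterization is by $(F,B,M)\in\Rr^{n+1}\times\Rr_{>0}^2$, where a binary form $F$ of degree $n$ is identified with its coefficient vector, and the fiber over $(F,B,M)$ is $V_F(B,M)$. This is indeed a definable family: the defining inequalities $\max(|x|,|y|)\leq B$ and $|F(x,y)|\leq M$ involve polynomials whose number and degrees are controlled by $n$ alone, so the \emph{definable complexity} of the family depends only on $n$.

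Barroero--Widmer's result, specialized to a bounded definable subset $Z\subset\Rr^2$ and a lattice $\Lambda\subset\Zz^2$, yields a bound of the shape
\[
\left|\#(Z\cap\Lambda) - \frac{\mathrm{Vol}(Z)}{\det\Lambda}\right| \leq C\Bigl(1 + \frac{V_1(Z)}{\lambda_1(\Lambda)}\Bigr),
\]
where $V_1(Z)$ is (up to a bounded factor) the sum of the lengths of the orthogonal projections of $Z$ onto the two coordinate axes, $\lambda_1(\Lambda)=m(\Lambda)$ is the first successive minimum of $\Lambda$, and $C=C(n)$ is uniform over the definable family. Taking $Z=V_F(B,M)\subseteq[-B,B]^2$, we have $V_1(Z)\leq 4B$ trivially, so the right-hand side is at most $c(n)\max(1,B/m(\Lambda))$ for a suitable constant $c(n)$, which is exactly what the lemma claims.

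The only genuinely delicate step is establishing that the constant $C$ is uniform in $F$, $B$, $M$, and $\Lambda$. This is precisely the point of Barroero--Widmer's definable-family formulation: once the collection $\{V_F(B,M)\}$ has been exhibited as a definable family in a fixed o-minimal structure (here, the semialgebraic sets) with complexity depending only on $n$, the constant their theorem produces is uniform over the parameter space. Beyond that verification, everything reduces to the routine observation that $V_F(B,M)$ lies in the box $[-B,B]^2$, which trivially controls $V_1(Z)$.
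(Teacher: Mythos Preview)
Your proof is correct and follows essentially the same route as the paper's: set up $\{V_F(B,M)\}$ as a semialgebraic definable family parametrized by the coefficients of $F$ together with $B,M$, apply \cite[Thm.~1.3]{BarWid14}, and bound the sum of the one-dimensional projections of $V_F(B,M)$ by $4B$ since $V_F(B,M)\subseteq[-B,B]^2$. The paper's write-up is terser but the argument is identical.
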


\begin{proof}
We write points in $\Rr^{n+3}\times\Rr^2$ as $(z_0\kdots z_n,u,v,x,y)$.
The set
$Z\subseteq \Rr^{n+3}\times\Rr^2$ given by the inequalities
\[
|z_0x^n+z_1x^{n-1}y+\cdots +z_ny^n|\leq v,\ \ |x|\leq u,\ |y|\leq u
\]
is a definable family in the sense of \cite{BarWid14}, parametrized
by the tuple   
$T=(z_0\kdots z_n,u,v)$. By substituting for this tuple the coefficients of $F$,
respectively $B$ and $M$, we obtain the set $V_F(B,M)$ as defined above.
The sum of the one-dimensional volumes of the orthogonal projections
of $V_F(B,M)$ on the $x$-axis and $y$-axis is at most $4B$,
and the first minimum of $\Lambda$ is $m(\Lambda )$.
Now Lemma \ref{lem:3.2b} follows directly from \cite[Thm. 3.1]{BarWid14}.
\end{proof}

A lattice $\Lambda\subseteq\Zz^2$ is called \emph{primitive} if it contains
points $(x,y)\in\zprim$.

\begin{lem}\label{lem:3.2c}
Let again $n$ be an integer $\geq 2$. Then there is a constant $c'(n)>0$
such that for every binary form $F\in\Zz [X,Y]$ of degree $n$,
every primitive lattice $\Lambda\subseteq\Zz^2$, and all reals $B,M>1$,
\begin{eqnarray*}
&&\left|\#\big( V_F(B,M)\cap\Lambda\cap\zprim\big)-\Big(\frac{6}{\pi^2}\prod_{p|\det\Lambda}
(1+p^{-1})^{-1}\Big)\cdot\frac{\mu_F(B,M)}{\det\Lambda}\right|
\\
&&
\qquad\qquad \leq c'(n)B\log 3B .
\end{eqnarray*}
\end{lem}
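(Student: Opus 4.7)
The plan is to reduce to Lemma \ref{lem:3.2b} by Möbius inversion applied to the primitivity condition. Set $\Lambda_k := \Lambda \cap k\Zz^2$. For each nonzero $(x,y) \in \Lambda$, the identity $[\gcd(x,y) = 1] = \sum_{k \mid \gcd(x,y)} \mu(k)$ gives
\begin{equation*}
\#\bigl( V_F(B,M) \cap \Lambda \cap \zprim \bigr) = \sum_{k \geq 1} \mu(k) \bigl( \#(V_F(B,M) \cap \Lambda_k) - 1 \bigr),
\end{equation*}
where the $-1$ discounts the origin. The sum is effectively supported on $k \leq B$, since $k\Zz^2 \cap [-B,B]^2 = \{(0,0)\}$ for $k > B$.

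Next I would apply Lemma \ref{lem:3.2b} to each $\Lambda_k$. Using $m(\Lambda_k) \geq k$ (as $\Lambda_k \subseteq k\Zz^2$), this yields
\begin{equation*}
\#(V_F(B,M) \cap \Lambda_k) = \frac{\mu_F(B,M)}{\det \Lambda_k} + O_n\bigl(\max(1, B/k)\bigr).
\end{equation*}
Summing the errors over $k \leq B$ gives $\sum_{k \leq B} \max(1, B/k) \ll B \log 3B$, matching the target error term, while the $-1$ correction contributes at most $|\sum_{k \leq B} \mu(k)| \leq B$, which is absorbed.

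It then remains to identify the main term $\mu_F(B,M) \sum_{k \leq B} \mu(k)/\det \Lambda_k$ with the expression in the statement. Writing $d := \det \Lambda$, the key Euler product
\begin{equation*}
\sum_{k \geq 1} \frac{\mu(k)}{\det \Lambda_k} = \frac{1}{\det \Lambda} \cdot \frac{6}{\pi^2} \prod_{p \mid d}(1 + p^{-1})^{-1}
\end{equation*}
can be proved by localizing at each prime $p$. Primitivity of $\Lambda$ together with the Smith normal form gives $\Lambda \otimes \Zz_p = \Zz_p e_1 \oplus p^{v_p(d)} \Zz_p e_2$ for some $\Zz_p$-basis $e_1, e_2$ of $\Zz_p^2$. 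A direct computation then shows $\det \Lambda_{p^j} / \det \Lambda = p^{j + \max(j, v_p(d)) - v_p(d)}$, so the local Möbius sum $\sum_{j \geq 0} \mu(p^j) \det \Lambda / \det \Lambda_{p^j}$ equals $1 - p^{-2}$ when $p \nmid d$ and $1 - p^{-1}$ when $p \mid d$. Taking the product over all primes produces exactly $(6/\pi^2) \prod_{p \mid d}(1 + p^{-1})^{-1}$. Finally, the tail contribution $\mu_F(B,M) \sum_{k > B} 1/\det \Lambda_k \leq 4B^2 \sum_{k > B} 1/k^2 \ll B$ is absorbed into the error. I expect the $p$-adic lattice computation at primes $p \mid d$ to be the main technical step; once it is in place, the rest is a routine Möbius inversion.
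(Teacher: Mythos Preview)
Your proposal is correct and follows essentially the same route as the paper: M\"obius inversion over $\Lambda_k=\Lambda\cap k\Zz^2$, an application of Lemma \ref{lem:3.2b} with $m(\Lambda_k)\ge k$, identification of the main term via an Euler product, and a tail estimate using $\mu_F(B,M)\le 4B^2$. The only real difference is in the determinant computation: the paper avoids localization by observing that a primitive $\Lambda$ admits a basis $\{\av,d\bv\}$ with $\{\av,\bv\}$ a basis of $\Zz^2$, so $\{h\av,\lcm(h,d)\bv\}$ is a basis of $\Lambda_h$ and $\det\Lambda_h=d\cdot h^2/\gcd(h,d)$; this gives the multiplicative function $\mu(h)\gcd(h,d)/h^2$ directly and makes the Euler product immediate, whereas you compute the local factors at each prime via Smith normal form. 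Both lead to the same product $\prod_{p\mid d}(1-p^{-1})\prod_{p\nmid d}(1-p^{-2})=(6/\pi^2)\prod_{p\mid d}(1+p^{-1})^{-1}$, and your handling of the origin (the $-1$ correction) is slightly more careful than the paper's, though the discrepancy is $O(B)$ either way.
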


\begin{proof}
In the proof below, $p$, $p_i$ denote primes.

Let $F(X,Y)\in\Zz [X,Y]$ be a binary form, $\Lambda\subseteq\Zz^2$ a primitive
lattice, and $B,M$ reals $>1$. Put $d:=\det\Lambda$.
For a positive integer $h$, define the lattice $\Lambda_h:=\Lambda\cap h\Zz^2$. 
Since $\Lambda$ is primitive, there is a basis $\{ \av,\bv\}$ of $\Zz^2$ such that $\{ \av , d\bv\}$
is a basis of $\Lambda$. Hence
$\{ h\av , \lcm (h,d)\bv\}$ is a basis of $\Lambda_h$, and so
\begin{equation}\label{3.1001}
\det\Lambda_h =h\cdot \lcm (h,d)=d\cdot\frac{h^2}{\gcd (h,d)}.
\end{equation}
Further, the shortest non-zero vector of $\Lambda_h$ has length
\begin{equation}\label{3.1002}
m(\Lambda_h)\geq h.
\end{equation} 

We define $\rho (h):=\#(V_F(B,M)\cap\Lambda_h)$. 
Then by the rule of inclusion and exclusion,
\begin{eqnarray*}
&&\#\Big( V_F(B,M)\cap\Lambda\cap\zprim\Big)
\\
&&\quad
=\rho (1)-\sum_{p\leq B} \rho (p)+\sum_{p_1<p_2:\, p_1p_2\leq B} \rho (p_1p_2)-\cdots
\\
&&\quad
=\sum_{h\leq B}\mu (h)\rho (h) ,
\end{eqnarray*}
where $\mu (h)$ denotes the M\"{o}bius function.
The previous lemma together with \eqref{3.1001}, \eqref{3.1002}
implies
\begin{eqnarray*}
&&\left|\#\Big( V_F(B,M)\cap\Lambda\cap\zprim\Big)-
\frac{\mu_F(B,M)}{d}\cdot \sum_{h\leq B}\mu(h)\cdot\frac{\gcd (d,h)}{h^2}\right|
\\
&&
\qquad\qquad
\leq c(n)B\cdot \sum_{h\leq B}\frac{|\mu (h)|}{h},
\end{eqnarray*}
hence
\begin{eqnarray*}
&&\left|\#\Big( V_F(B,M)\cap\Lambda\cap\zprim\Big)-\frac{\mu_F(B,M)}{d}\cdot\sum_{h=1}^{\infty}\mu(h)\cdot\frac{\gcd (d,h)}{h^2}\right|
\\
&&
\qquad\qquad
\leq 
\frac{\mu_F(B,M)}{d}\cdot \sum_{h>B}|\mu (h)|\frac{\gcd (d,h)}{h^2} 
+c(n)B\cdot \sum_{h\leq B}\frac{|\mu (h)|}{h},
\\
&&
\qquad\qquad
\leq c'(n)B\log 3B,
\end{eqnarray*}
where we have used $\sum_{h>B}|\mu (h)|\medfrac{\gcd (d,h)}{h^2}\leq 2d/B$,
$\mu_F(B,M)\leq 4B^2$, and $\sum_{h\leq B}\medfrac{|\mu (h)|}{h}\leq\log 3B$.
Now the proof is finished by observing that
\[
\sum_{h=1}^{\infty}\mu (h)\cdot \frac{\gcd (d,h)}{h^2}
=\prod_{p|d}(1-p^{-1})\cdot\prod_{p\nmid d}(1-p^{-2})
=\frac{6}{\pi^2}\cdot\prod_{p|d}(1+p^{-1})^{-1}.
\]
\end{proof} 

\begin{lem}\label{lem:3.3}
Let $\alpha_1\kdots\alpha_t$ be positive reals. 
Denote by $N(A)$
the number of tuples of non-negative
integers $(u_1\kdots u_t)$ with 
\begin{equation}\label{eq:3.1}
A\leq \alpha_1u_1+\cdots +\alpha_tu_t\leq A+2(\alpha_1+\cdots +\alpha_t).
\end{equation}
Then
\[
N(A)\asymp_{t,\alpha_1\kdots\alpha_t} A^{t-1}\ \ \mbox{as } A\to\infty .
\]
\end{lem}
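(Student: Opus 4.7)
The plan is to reduce the problem to a standard volume/lattice-point estimate by projecting onto the first $t-1$ coordinates. Set $C := 2(\alpha_1+\cdots+\alpha_t)$, so we are counting non-negative integer tuples $(u_1,\ldots,u_t)$ with $\alpha_1 u_1+\cdots+\alpha_t u_t \in [A,A+C]$.

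\textbf{Upper bound.} If $(u_1,\ldots,u_t)$ is any such tuple, then automatically $\alpha_1 u_1+\cdots+\alpha_{t-1}u_{t-1}\le A+C$, so there are at most
\[
\prod_{i=1}^{t-1}\Big(\big\lfloor (A+C)/\alpha_i\big\rfloor +1\Big)=O_{t,\alpha_1,\ldots,\alpha_t}\big(A^{t-1}\big)
\]
choices for $(u_1,\ldots,u_{t-1})$. For each such choice, the remaining constraint on $u_t$ reads $\alpha_t u_t\in[R,R+C]$ for some $R\ge 0$, and this interval contains at most $\lfloor C/\alpha_t\rfloor+1$ non-negative integers. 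Multiplying gives $N(A)\ll_{t,\alpha_1,\ldots,\alpha_t}A^{t-1}$.

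\textbf{Lower bound.} For any non-negative integer tuple $(u_1,\ldots,u_{t-1})$ with $R:=A-\sum_{i<t}\alpha_i u_i\ge 0$, the condition on $u_t$ becomes $\alpha_t u_t\in[R,R+C]$. Since $C\ge 2\alpha_t$, this interval has length at least $2\alpha_t$ and lies in $[0,\infty)$, so it contains at least one (in fact at least $\lfloor C/\alpha_t\rfloor$) non-negative integer $u_t$. Distinct $(u_1,\ldots,u_{t-1})$ yield distinct full tuples, so
\[
N(A)\ \ge\ \#\Big\{(u_1,\ldots,u_{t-1})\in\Zz_{\ge 0}^{t-1}:\ \alpha_1 u_1+\cdots+\alpha_{t-1}u_{t-1}\le A\Big\}.
\]
The latter counts lattice points in a $(t-1)$-dimensional simplex of volume $A^{t-1}/\bigl((t-1)!\prod_{i<t}\alpha_i\bigr)$; by the standard comparison of lattice-point counts with the volume (with the boundary contribution being $O(A^{t-2})$), it is $\asymp_{t,\alpha_1,\ldots,\alpha_t}A^{t-1}$ as $A\to\infty$. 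This yields the matching lower bound.

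The argument is essentially routine; no step presents a real obstacle, the only mild point being to verify that the slab width $C$ exceeds $2\alpha_t$ so that one can guarantee a valid $u_t$ for each projection into the simplex, which is exactly why the hypothesis uses $2(\alpha_1+\cdots+\alpha_t)$ rather than a smaller multiple.
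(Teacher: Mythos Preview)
Your proof is correct. It differs from the paper's argument, which is a direct box-counting in $t$ dimensions: the paper associates to each admissible lattice point $\uv$ the unit cube $\CC_\uv=[u_1,u_1+1)\times\cdots\times[u_t,u_t+1)$, observes that the union $\CC$ of these cubes has volume exactly $N(A)$, and sandwiches $\CC$ between two parallel slabs $\CC_1\subseteq\CC\subseteq\CC_2$ in the positive orthant whose volumes are both $\asymp A^{t-1}$ (each being a difference of two simplex volumes). Your route instead fibres over the last coordinate: you bound the number of admissible $u_t$ above and below for each $(u_1,\ldots,u_{t-1})$, reducing both bounds to the standard lattice-point count in a $(t-1)$-dimensional simplex. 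Both arguments are elementary and of comparable length; yours makes the role of the slab width $C=2(\alpha_1+\cdots+\alpha_t)$ more transparent (it guarantees at least one valid $u_t$ over each point of the base simplex), while the paper's is symmetric in the variables and avoids invoking the simplex lattice-point estimate as a separate black box. One minor remark: your closing comment that the factor $2$ is ``exactly'' what is needed slightly overstates the case---your lower-bound step really only uses $C\ge\alpha_t$, not $C\ge 2\alpha_t$---but this does not affect the validity of the argument.
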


\begin{proof}
Constants implied by the Vinogradov symbols $\ll$, $\gg$ will depend on
$t,\alpha_1\kdots\alpha_t$.

For $\uv =(u_1\kdots u_t)\in\Zz^t$, denote by $\CC_{\uv}$ the cube in $\Rr^t$
consisting of the points $\yv=(y_1\kdots y_t)$ with $u_i\leq y_i<u_i+1$
for $i=1\kdots t$. Let $\CC$ be the union of the cubes $\CC_{\uv}$ over
all points $\uv$ with non-negative integer coordinates satisfying
\eqref{eq:3.1}. Put $\alpha :=\alpha_1+\cdots +\alpha_t$. 
Then $\CC_1\subseteq \CC\subseteq \CC_2$, where $\CC_1,\CC_2$ are the subsets
of $\Rr^s$ given by 
\begin{eqnarray*}
&&A+\alpha\leq \alpha_1y_1+\cdots +\alpha_ty_t\leq A+2\alpha ,\ \ 
y_1\geq 0\kdots y_t\geq 0,
\\
&&A\leq \alpha_1y_1+\cdots +\alpha_ty_t\leq A+3\alpha,\ \ 
y_1\geq 0\kdots y_t\geq 0,
\end{eqnarray*}
respectively. Clearly $N(A)$ is estimated from below and above
by the measures of $\CC_1$ and $\CC_2$, the first being
$\gg (A+2\alpha )^t-(A+\alpha )^t\gg A^{t-1}$, the second being $\ll A^{t-1}$.
The lemma follows.
\end{proof}

We first give the complete proof of Theorem \ref{thm1b.2}.
The proof of Theorem \ref{thm1a.2} is then obtained by making a few
modifications.

\begin{proof}[Proof of Theorem \ref{thm1b.2}]
Let $F(X,Y)\in \Zz [X,Y]$ be a binary form of degree $n\geq 3$
with non-zero discriminant, $\epsilon$ a real
with $0<\epsilon<\medfrac{1}{n}$ and 
$S=\{ p_1\kdots p_s\}$ a finite set of primes.
Let $S'=\{ p_1\kdots p_{s'}\}$ be the set of $p\in S$ such that $\mod{F(x,y)}{0}{p^{g_p+1}}$
has a solution in $\zprim$, and let $S''=\{ p_{s'+1}\kdots p_s\}$
be the set of remaining primes.
In what follows, constants implied by Vinogradov symbols $\ll$, $\gg$
and by the Landau $O$-symbol will depend only on $F$, $S$ and $\epsilon$.

We first prove that
\[
N(F,S,\epsilon ,B)\ll_{f,S,\epsilon} B^{2-n\epsilon}(\log B)^{s'-1}
\ \ \mbox{as } B\to\infty. 
\]
The set of pairs $(x,y)$ under consideration 
can be partitioned into sets $\NN_h$,
where $h$ runs through the set of positive integers composed of primes from $S$,
and $\NN_h$ is
the set of pairs $(x,y)\in\zprim$
with 
\[
\max (|x|,|y|)\leq B,\ \ [F(x,y)]_S=h,\ \ |F(x,y)|\leq h^{1/\epsilon}.
\]
We first estimate $\#\NN_h$ from above by means of Lemma \ref{lem:3.2c}
where $h$ is any positive integer composed of primes from $S$.
Notice that for $(x,y)\in\NN_h$ we have
$\mod{F(x,y)}{0}{h}$.
By Lemma \ref{lem:3.2a} and the Chinese Remainder Theorem,
the set of these $(x,y)$ lies in $\ll 1$ congruence classes
modulo $h$. Each of these congruence classes
is contained in a set of the shape 
\[
\{ (x,y)\in\Zz^2:\ \mod{y_0x}{x_0y}{h}\}
\]
with $(x_0,y_0)\in\zprim$, which is a 
primitive lattice of determinant $h$. So $\NN_h$ is contained in $\ll 1$
primitive lattices of determinant $h$.

We next estimate the area $\mu_F(B,h^{1/\epsilon})$ of $V(B,h^{1/\epsilon})$.
There is a constant $c_F>0$ such that 
\begin{equation}\label{3.x3}
|F(x,y)|\leq c_F(\max (|x|,|y|)^n\ \ \mbox{for } (x,y)\in\Rr^2.
\end{equation} 
If $h\geq (c_FB^n)^{\epsilon}$ then the condition $|F(x,y)|\leq h^{1/\epsilon}$
is already implied by $\max (|x|,|y|)\leq B$, and so
$\mu_F(B,h^{1/\epsilon})=4B^2$.
On the other hand, if
$h<(c_FB^n)^{\epsilon}$, we have, denoting by $\mu$ the area,
\begin{eqnarray*}
\mu_F(B,h^{1/\epsilon})&\leq& 
\mu\big(\{ (x,y)\in\Rr^2:\, |F(x,y)|\leq h^{1/\epsilon}\}\big)
\\
&=& h^{2/n\epsilon}\cdot\mu\big(\{ (x,y)\in\Rr^2:\, |F(x,y)|\leq 1\}\big)
\ll h^{2/n\epsilon},
\end{eqnarray*}
since the set of $(x,y)\in\Rr^2$ with $|F(x,y)|\leq 1$ has finite area
(see for instance \cite{mahler33}).
Now invoking Lemma \ref{lem:3.2c}, we infer
\begin{equation}\label{3.x4}
\#\NN_h\ll 
\left\{\begin{array}{ll}
B^2/h +O(B\log B)\ \ &\mbox{if $h\geq (c_FB^n)^{\epsilon}$,}
\\
h^{(2/n\epsilon)-1}+O(B\log B) &\mbox{if $h<(c_FB^n)^{\epsilon}$.}
\end{array}\right.
\end{equation}
Finally, from \eqref{3.x3} it is clear that $\NN_h=\emptyset$
if $h>c_FB^n$.

Let $\alpha :=\log (p_1\cdots p_{s'})$.
For $j\in\Zz$, let $\MM_j$ be the union of the sets $\NN_h$ with
\begin{equation}\label{3.x8}
e^{2j\alpha}(c_FB^n)^{\epsilon}\leq h
<e^{(2j+2)\alpha}(c_FB^n)^{\epsilon}.
\end{equation}
We restrict ourselves to $j$ with
\begin{equation}\label{3.x5}
e^{2j\alpha}(c_FB^n)^{\epsilon}\leq c_FB^n,\ \ 
e^{(2j+2)\alpha}(c_FB^n)^{\epsilon}>1, 
\end{equation}
since for the remaining $j$ the set $\MM_j$ is empty. Thus,
\begin{equation}\label{3.x9}
N(F,S,\epsilon ,B)\ll \sum_j \#\MM_j,
\end{equation}
where the summation is over $j$ with \eqref{3.x5}.

We estimate the number of $h$ with \eqref{3.x8}.
Write $h=h'p_1^{u_1}\cdots p_{s'}^{u_{s'}}$ where $h'$ is composed of primes
from $S''$. Then $h'$ divides $\prod_{p\in S''} p^{g_p}$, so
we have $\ll 1$ possibilities for $h'$.
By applying Lemma \ref{lem:3.3} with $t=s'$,  
$A=e^{2j\alpha}(c_FB^n)^{\epsilon})/h'$, $\alpha_i=\log p_i$ for $i=1\kdots s'$,
we infer from Lemma \ref{lem:3.3} that for given $h'$
the number of possibilities for $(u_1\kdots u_{s'})$ is $\ll (\log B)^{s'-1}$.
Hence the number of $h$ with \eqref{3.x8} is $\ll (\log B)^{s'-1}$.
Now from \eqref{3.x4} it follows that for $j$ with \eqref{3.x5},
\[
\#\MM_j\ll 
\left\{
\begin{array}{ll}
e^{-2j\alpha}B^{2-n\epsilon}(\log B)^{s'-1}+O(B(\log B)^{s'})\ \ &
\mbox{if $j\geq 0$,}
\\
e^{-2|j|\alpha ((2/n\epsilon) -1)}B^{2-n\epsilon}(\log B)^{s'-1}+O(B(\log B)^{s'})
\ \ &\mbox{if $j<0$.}
\end{array}\right.
\]
Finally, from these estimates and \eqref{3.x9} we deduce,
taking into consideration that the number of $j$ with \eqref{3.x5}
is $\ll \log B$, and also our assumption $0<\epsilon <\medfrac{1}{n}$, 
\begin{eqnarray*}
N(F,S,\epsilon ,B)
&\ll& \Big(\sum_{j\geq 0} e^{-2j\alpha}+\sum_{j<0} e^{-2|j|\alpha ((2/n\epsilon )-1)}\Big)\cdot B^{2-n\epsilon}(\log B)^{s'-1}
\\
&&\hspace*{6cm}
+O(B(\log B)^{s'})
\\
&\ll& B^{2-n\epsilon}(\log B)^{s'-1}.
\end{eqnarray*}

We next prove that
\[
N(F,S,\epsilon ,B)\gg B^{2-n\epsilon}(\log B)^{s'-1}
\ \ \mbox{as } B\to\infty.
\]

For $i=s'+1\kdots s$, let $a_i$ be the largest integer $u$
such that $\mod{F(x,y)}{0}{p_i^u}$
is solvable in $(x,y)\in\zprim$. 
Let for the moment $h$ be any integer of the shape
$h=p_1^{u_1}\cdots p_s^{u_s}$ 
where $u_i\geq g_{p_i}+1$ for $i=1\kdots s'$ and $u_i=a_i$ for $i=s'+1\kdots s$,
and where $h\geq (c_FB^n)^{\epsilon}$.
By Lemma \ref{lem:3.2c} and the Chinese Remainder Theorem,
the number of congruence classes
modulo $h$ of $(x,y)\in\zprim$ with $\mod{F(x,y)}{0}{h}$ 
is 
\[
r:=\prod_{i=1}^{s'} r(F,p_i^{g_{p_i}+1})\cdot\prod_{i=s'+1}^s r(F,p_i^{a_i}),
\]
which is independent of $h$.
As mentioned above, each of these
congruence classes is contained in a primitive lattice of determinant $h$.
Furthermore, since these lattices arise from different residue classes
modulo $h$ of points in $\zprim$, the intersection of any two of these lattices
does not contain points from $\zprim$ anymore. Since moreover
by our assumption $h\geq (c_FB^n)^{\epsilon}$
the set $V(B,h^{1/\epsilon})$ has area $(4B)^2$, 
an application of Lemma \ref{lem:3.2c} yields that
the set
of $(x,y)\in\zprim$ with $\max (|x|,|y|)\leq B$, $|F(x,y)|\leq h^{1/\epsilon}$
and $\mod{F(x,y)}{0}{h}$ has cardinality
\[
cr\cdot \frac{(4B)^2}{h} +O(B\log B),
\]
where $c=(6/\pi^2)\prod_{p\in S_0}(1+p^{-1})^{-1}$,
with $S_0$ the set obtained from $S$ by removing those primes $p_i$ from $S''$
for which $a_i=0$.
By the rule of inclusion and exclusion, the set $\NN_h$, i.e., the set
of $(x,y)\in\zprim$ as above with $F(x,y)$ divisible by $h$ but not by
$hp$ for $p\in S'$, has cardinality
\begin{eqnarray}\label{3.x7}
&&cr\cdot \frac{(4B)^2}{h}-\sum_{p\in S'}cr\cdot \frac{(4B)^2}{ph}+
\sum_{p,q\in S' , p<q}cr\cdot \frac{(4B)^2}{pqh}-\cdots 
\\
\nonumber
&&\hspace*{8cm} +O(B\log B)
\\
\nonumber
&&= cr\prod_{p\in S'}(1-p^{-1})\cdot\frac{(4B)^2}{h}+O(B\log B)\gg
\frac{B^2}{h}+O(B\log B).
\end{eqnarray}  
We now consider the set of integers $h$
of the shape $p_1^{u_1}\cdots p_s^{u_s}$
with $u_i\geq g_{p_i}+1$ for $i=1\kdots s'$ and $u_i=a_i$
for $i=s'+1\kdots s$, and with 
$(c_FB^n)^{\epsilon}\leq h\leq e^{2\alpha}(c_FB^n)^{\epsilon}$,
where again $\alpha =\log (p_1\cdots p_{s'})$.
By Lemma \ref{lem:3.3}, there are $\gg (\log B)^{s'-1}$ such integers.
Using again $0<\epsilon <\medfrac{1}{n}$, it follows that
\[
N(F,S,\epsilon ,B)\geq \sum_h\#\NN_h\gg B^{2-n\epsilon}(\log B)^{s'-1}.
\]

This completes the proof of Theorem \ref{thm1b.2}.
\end{proof}

\begin{proof}[Proof of Theorem \ref{thm1a.2}]
Let $f\in \Zz [X]$ be a polynomial of degree $n\geq 2$
with non-zero discriminant, $\epsilon$ a real
with $0<\epsilon<\medfrac{1}{n}$ and 
$S=\{ p_1\kdots p_s\}$ a finite set of primes.
Similarly as above
$S'=\{ p_1\kdots p_{s'}\}$ is the set of $p\in S$ such that $\mod{f(x)}{0}{p^{g_p+1}}$
is solvable in $\Zz$ and $S''=\{ p_{s'+1}\kdots p_s\}$.

The proof is the same as that of Theorem \ref{thm1a.2} except from a 
few small modifications. The main difference is that 
instead of Lemma \ref{lem:3.2c} we use the 
simple observation that if $V_f(B,M)$
is the set of $x\in\Rr$ with $|x|\leq B$ and $|f(x)|\leq M$ and $\mu_f(B,M)$
is the one-dimensional measure of this set, then for all $a,h\in\Zz$ with $h>0$,
the number of integers $x\in V_f(B,M)$ with $\mod{f(x)}{a}{h}$
is
\begin{equation}\label{3.x6}
\mu_f(B,M)/h + \mbox{error term},\ \ \mbox{with $|$error term$|\leq c(n)$}
\end{equation}
for some quantity $c(n)$ depending only on $n=\deg f$.

We first prove that
\begin{equation}\label{upperbound}
N(f,S,\epsilon ,B)\ll_{f,S,\epsilon} B^{1-n\epsilon}(\log B)^{s'-1}
\ \ \mbox{as } B\to\infty. 
\end{equation}
Let $c_f$ be a constant such that
$|f(x)|\le c_f|x|^n$ for $x\in\Rr$. Consider the set $\NN_h$ of integers $x$
with $|x|\leq B$, $[f(x)]_S=h$ and $|f(x)|\leq h^{1/\epsilon}$.
Then if $h\geq (c_fB^n)^{\epsilon}$ we have $\mu_f(B,h^{1/\epsilon})=2B$,
while otherwise, $\mu_f(B,h^{1/\epsilon})\ll h^{1/n\epsilon}$,
since $|f(x)|\gg |x|^n$ if $|x|\gg 1$.
Now a similar computation as in the proof of Theorem \ref{thm1b.2},
using Lemma \ref{lem:3.2} instead of Lemma \ref{lem:3.2a}, gives instead
of \eqref{3.x4}, 
\[
\#\NN_h\ll 
\left\{\begin{array}{ll}
B/h +O(1)\ \ &\mbox{if $h\geq (c_fB^n)^{\epsilon}$,}
\\
h^{(1/n\epsilon)-1}+O(1) &\mbox{if $h<(c_fB^n)^{\epsilon}$,}
\end{array}\right.
\]
and then the proof of \eqref{upperbound} is completed in exactly the same
way as in the proof of Theorem \ref{thm1b.2}.

The proof of 
\begin{equation}\label{lowerbound}
N(f,S,\epsilon ,B)\gg_{f,S,\epsilon} B^{1-n\epsilon}(\log B)^{s-1}
\ \ \mbox{as } B\to\infty
\end{equation}
follows the same lines as that of Theorem \ref{thm1b.2}.
For $i=s'+1\kdots s$ let $a_i$ be the largest integer $u$ such that
$\mod{f(x)}{0}{p_i^{a_i}}$ is solvable. Let $h=p_1^{u_1}\cdots p_s^{u_s}$
with $u_i\geq g_{p_i}+1$ for $i=1\kdots s'$ and $u_i=a_i$ for $i=s'+1\kdots s$,
and with $h\geq (c_fB^n)^{\epsilon}$.
Then by combining \eqref{3.x6} with Lemma \ref{lem:3.2} one obtains
that the set of integers $x$ with $|x|\leq B$, $\mod{f(x)}{0}{h}$
and $|f(x)|\leq h^{1/\epsilon}$ has cardinality
\[
rB/h +O(1)
\]
with $r>0$ depending only on $f$, and then an inclusion and exclusion
argument gives
\[
\#\NN_h\gg B/h +O(1).
\]
Again, an argument completely similar to that in the proof of
Theorem \ref{thm1b.2} gives \eqref{lowerbound}.
\end{proof}

\section{Proof of Theorem \ref{thm2.0}}\label{section4}

The theorem can be proved by modifying the arguments from \cite{chenru}.
We prefer to follow \cite[\S8]{ev95}, which already contains
the basic ideas.
Let $F\in\Zz [X_1\kdots X_m]$ be a decomposable form of degree $n$ with splitting field
$K$. We take 
a factorization of $F$ as in \eqref{eq1.0}.
Assume that $F$ satisfies condition (i) of Theorem \ref{B}.

Let $D$ be a linear subspace of $\Qq^m$ of dimension $\geq 2$.
Denote by $D^*$ the $K$-vector space of linear forms in $K[X_1\kdots X_m]$
that vanish identically on $D$.
Then a set of linear forms in $K[X_1\kdots X_m]$ is linearly
dependent on $D$ if some non-trivial $K$-linear combination of these
forms belongs to $D^*$ and linearly independent on $D$ if no such linear combination exists. The $D$-rank $\rankD\MM$ of a set 
of linear forms $\MM\subset K[X_1\kdots X_m]$,
is the maximal number of linear forms in $\MM$
that are linearly independent on $D$. We have $\rankD\LL_F=\dim D$.

We call a subset $\II$ of $\LL_F$ \emph{minimally linearly dependent on $D$,}
if $\II$ itself is linearly dependent on $D$, but every proper, non-empty
subset of $\II$ is linearly independent on $D$.
We define a(n undirected) graph $\GG_D$ as follows. The set of vertices of $\GG_D$ is $\LL_F$;
and $\{ \ell ,\ell'\}$ is an edge of $\GG_D$ if there is a 
subset of $\LL_F$
that is minimally
linearly dependent on $D$ and contains both $\ell$ and $\ell'$.
Clearly, if $\{ \ell ,\ell\}$ is an edge of $\GG_D$, then so is
$\{\sigma (\ell ),\sigma (\ell')\}$ for each $\sigma\in\galk$,
i.e., each $\sigma$ acts on $\GG_D$ as an automorphism.

\begin{lem}\label{lem:4.1}
Let $D$ be a linear subspace of $\Qq^m$ of dimension $\geq 2$ such that
none of the linear forms in $\LL$ vanishes identically on $D$. Then
$\GG_D$ is connected.
\end{lem}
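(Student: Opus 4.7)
The plan is to argue by contradiction using condition~(i) of Theorem~\ref{B} applied to $\LL$. Suppose $\GG_D$ is disconnected. I first observe that $\galk$ acts on $\LL_F$ preserving the property of being minimally linearly dependent on $D$; hence $\galk$ acts on $\GG_D$ by graph automorphisms and permutes its connected components. Taking $\MM$ to be the vertex set of any single component gives $\emptyset\propersubset\MM\propersubset\LL_F$, and for each $\sigma\in\galk$ the image $\sigma(\MM)$ is again a component, so either $\sigma(\MM)=\MM$ or $\sigma(\MM)\cap\MM=\emptyset$; thus $\MM$ is $\galk$-proper.

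The core step is the following matroid-theoretic claim: if $\MM'$ is a union of connected components of $\GG_D$, then
\[
[\MM']\cap[\LL_F\setminus\MM']\subseteq D^*.
\]
To prove it, view $\LL_F$ as a linear matroid via the restriction map to $K$-linear functionals on $D$; its circuits are precisely the minimally linearly dependent subsets on $D$, which by definition of $\GG_D$ lie inside single components and therefore never bridge $\MM'$ and $\LL_F\setminus\MM'$. Consequently, joining a maximal $D$-independent subset of $\MM'$ with a maximal $D$-independent subset of $\LL_F\setminus\MM'$ yields a $D$-independent set (any circuit in it would lie entirely in one part, contradicting the $D$-independence of that part), whence
\[
\rankD\MM'+\rankD(\LL_F\setminus\MM')=\rankD\LL_F=\dim D,
\]
where the last equality uses $\rank\LL_F=m$ from condition~(i). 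Since the restrictions of $[\MM']$ and $[\LL_F\setminus\MM']$ to $D$ together span the restriction of $[\LL_F]$, this dimensional identity forces them to intersect trivially, which is the claim.

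To finish, I apply condition~(i) of Theorem~\ref{B} to the $\galk$-proper subset $\MM$. Each $\sigma(\MM)$ is itself a single component of $\GG_D$, so the claim gives $[\sigma(\MM)]\cap[\LL_F\setminus\sigma(\MM)]\subseteq D^*$ for every $\sigma\in\galk$; hence the whole sum in \eqref{eq1.0a} lies in $D^*$. But condition~(i) asserts that $\LL$ meets this sum, producing a form in $\LL\cap D^*$ and contradicting the hypothesis that no form in $\LL$ vanishes identically on $D$.

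The main obstacle is the matroid direct-sum step: identifying the connected components of $\GG_D$ with the components of the linear matroid on $\LL_F$ coming from restriction to $D$, and thereby deducing additivity of $\rankD$ across $\MM'$ and its complement. Once this identification is in place, the remainder is essentially bookkeeping with the definitions and the statement of condition~(i).
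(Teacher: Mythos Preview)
Your proof is correct and follows essentially the same approach as the paper. Both arguments hinge on the observation that every minimally $D$-linearly dependent subset of $\LL_F$ lies in a single component of $\GG_D$, so that a basis of $\MM$ on $D$ joined with a basis of $\LL_F\setminus\MM$ on $D$ remains $D$-independent; the paper presents this by first extracting the form guaranteed by condition~(i) and then constructing a bridging circuit from it, whereas you first prove $[\MM']\cap[\LL_F\setminus\MM']\subseteq D^*$ for unions of components and only afterwards invoke condition~(i), but the content is the same.
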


\begin{proof}
Assume that $\GG_D$ is not connected.
Let $\MM$ be the vertex set of a connected component of $\GG_D$.
Then $\emptyset\propersubset\MM\propersubset\LL_F$.
Clearly, for each $\sigma\in\galk$, $\sigma (\MM )$ is also the vertex set
of a connected component of $\GG_D$, hence either $\sigma (\MM )=\MM$,
or $\sigma (\MM )\cap\MM =\emptyset$. That is, $\MM$ is $\galk$-proper.

By assumption (i) of Theorem \ref{B}, the $K$-vector space
\[ 
\sum_{\sigma\in\galk}[\sigma (\MM )]\cap[\LL_F\setminus\sigma (\MM )]
\]
contains a linear form from $\LL$, which, by assumption, does not lie in
$D^*$.
Hence there is $\sigma\in\galk$ such that 
$[\sigma (\MM )]\cap[\LL_F\setminus\sigma (\MM )]$ contains a linear form
outside $D^*$.
But since $D^*$ is defined over $\Qq$,
we have $\sigma (D^*)=D^*$ and so
$[\MM ]\cap[\LL_F\setminus\MM ]$ contains a linear form not in $D^*$,
say $\ell_0$.
Take  maximal subsets $\MM_1$, $\MM_2$
of $\MM$ and $\LL_F\setminus\MM$, respectively, that are both linearly
independent on $D$.
Then there are $\lambda_{\ell}\in K$ for
$\ell\in\MM_1\cup\MM_2$ such that 
\[
\sum_{\ell\in\MM_1}\lambda_{\ell}\ell
\equiv \sum_{\ell\in\MM_2}\lambda_{\ell}\ell\equiv \ell_0\, ({\rm mod}\, D^*).
\]
This implies that
$\MM_1\cup\MM_2$ is linearly dependent on $D$. We can 
take a subset of $\MM_1\cup\MM_2$ that is minimally linearly dependent 
on $D$.
This set necessary must have elements with both $\MM_1$ and $\MM_2$
in common. But then there would be an edge connecting an element
of $\MM$ with one of $\LL_F\setminus\MM$, which 
contradicts that $\MM$ is the vertex set of a connected component
of $\GG_D$.
\end{proof}

\begin{lem}\label{lem:4.2}
Let $D$ be a linear subspace of $\Qq^m$ of dimension $d\geq 2$ 
and $\MM$ a non-empty subset of $\LL_F$ with $\rankD\MM<d$.
Then there is a 
subset $\II$ of $\LL_F$ that is minimally linearly dependent
on $D$, such that
$\MM\cap\II\not=\emptyset$ and $\rankD \MM\cup\II >\rankD\MM$.
\end{lem}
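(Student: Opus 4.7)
The plan is to use the matroid of $D$-linear dependencies on $\LL_F$, construct an explicit minimally $D$-dependent set via a fundamental-circuit argument, and rule out the degenerate case via Lemma \ref{lem:4.1}.

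First I would set up a basis and its fundamental circuits. Let $\MM_0\subseteq\MM$ be a maximal $D$-independent subset, so $|\MM_0|=\rankD\MM$. Since $\rankD\MM<d=\rankD\LL_F$, there exists $\ell^{*}\in\LL_F$ with $\ell^{*}\notin[\MM]+D^{*}$; equivalently $\MM_0\cup\{\ell^{*}\}$ is $D$-independent. Extend $\MM_0\cup\{\ell^{*}\}$ to a maximal $D$-independent set $\mathcal{B}\subseteq\LL_F$ with $|\mathcal{B}|=d$. For each $\ell\in\LL_F\setminus\mathcal{B}$, the unique expansion $\ell\equiv\sum_{\ell'\in\mathcal{B}}c_{\ell,\ell'}\ell'\pmod{D^{*}}$ yields the fundamental circuit $C(\ell):=\{\ell\}\cup\{\ell'\in\mathcal{B}:c_{\ell,\ell'}\ne 0\}$, which one checks directly to be minimally $D$-dependent.

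The main case is when some $\ell\in\LL_F\setminus\mathcal{B}$ has support $\mathrm{supp}_{\mathcal{B}}(\ell)$ meeting both $\MM_0$ and $\mathcal{B}\setminus\MM_0$. Then $\II:=C(\ell)$ satisfies $\II\cap\MM\supseteq\mathrm{supp}_{\mathcal{B}}(\ell)\cap\MM_0\ne\emptyset$, and $\II$ contains an element of $\mathcal{B}\setminus\MM_0$, which by construction lies outside $[\MM]+D^{*}$; hence $\rankD(\MM\cup\II)>\rankD\MM$, as required.

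The hard part will be ruling out the complementary case, in which every $\ell\in\LL_F\setminus\mathcal{B}$ has $\mathrm{supp}_{\mathcal{B}}(\ell)$ contained entirely in $\MM_0$ or entirely in $\mathcal{B}\setminus\MM_0$. In this degenerate situation, writing $V:=[\MM_0]+D^{*}$ and $W:=[\mathcal{B}\setminus\MM_0]+D^{*}$, every element of $\LL_F$ lies (modulo $D^{*}$) in $V$ or in $W$, and any $D$-linear relation among elements of $\LL_F$ splits into a $V$-part and a $W$-part that vanish separately modulo $D^{*}$. Hence every minimally $D$-dependent subset of $\LL_F$ would lie entirely inside $V$ or entirely inside $W$, making the graph $\GG_D$ disconnected --- contradicting Lemma \ref{lem:4.1}. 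This obstruction relies on the hypothesis of Lemma \ref{lem:4.1} that no form of $\LL$ vanishes identically on $D$, which is in force in the intended application inside the proof of Theorem \ref{thm2.0}; if this hypothesis is threatened, the argument is preceded by a reduction that first separates off those forms of $\LL$ vanishing on $D$.
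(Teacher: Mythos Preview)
Your proof is correct and follows the same essential idea as the paper's: both use Lemma~\ref{lem:4.1} to guarantee a minimally $D$-dependent set bridging the $D$-closure of $\MM$ and its complement in $\LL_F$, and then locate $\II$ inside a $D$-independent set extending a maximal $D$-independent subset $\MM_0$ of $\MM$. The paper invokes Lemma~\ref{lem:4.1} directly at the outset (with the partition $\MM'$ versus $\LL_F\setminus\MM'$) and rewrites the resulting dependency in terms of $\MM_1\cup\MM_2$, whereas you reach the same conclusion via a fundamental-circuit case split with Lemma~\ref{lem:4.1} only handling the degenerate case; your caveat that the hypothesis of Lemma~\ref{lem:4.1} (no $\ell\in\LL$ vanishes on $D$) is tacitly assumed applies equally to the paper's own proof.
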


\begin{proof}
Let $\MM'$ consist of all linear forms in $\LL_F$ that are linear combinations
of the linear forms in $\MM$ and of the linear forms in $D^*$. 
Then $\rankD\MM' =\rankD\MM <d$, hence $\emptyset\propersubset\MM'\propersubset\LL_F$.
Take a maximal subset $\MM_1$ of $\MM$ that is linearly independent
on $D$; then it is also
a maximal subset of $\MM'$ that is linearly independent on $D$. 
Let $\MM_2$ be
a maximal subset of $\LL_F\setminus\MM'$ that is linearly
independent on $D$.

By Lemma \ref{lem:4.1}
there is a set $\JJ\subseteq\LL_F$ that is minimally linearly dependent on $D$ and  contains elements of
both $\MM'$ and $\LL_F\setminus\MM'$.
This gives a linear combination $\sum_{\ell\in\JJ}\lambda_{\ell}\ell\in D^*$,
with $\ell_0 :=\sum_{\ell\in\JJ\cap \MM'}\lambda_{\ell}\ell\not\in D^*$.
Writing the linear forms in $\JJ\cap\MM'$ as linear combinations modulo $D^*$
of the linear forms in $\MM_1$, and the linear forms in $\JJ\cap (\LL_F\setminus \MM')$ as linear combinations modulo $D^*$ of the linear 
forms in $\MM_2$, we obtain a relation
$\sum_{\ell\in\MM_1\cup\MM_2}\mu_{\ell}\ell\in D^*$, with
$\sum_{\ell\in\MM_1}\mu_{\ell}\ell\equiv\ell_0\not\equiv 0
\, ({\rm mod}\, D^*)$.
Hence $\MM_1\cup\MM_2$ is linearly dependent on $D$.
Take a subset $\II$ of $\MM_1\cup\MM_2$ that is minimally linearly dependent on $D$. We have
$\II\cap\MM_1\not=\emptyset$ and $\II\cap\MM_2\not=\emptyset$
since $\MM_1$ and $\MM_2$ are linearly independent on $D$.
This implies $\II\cap\MM\not=\emptyset$. Further, $\MM_2\cap\MM'=\emptyset$,
therefore each of the linear forms in $\MM_2$ is linearly independent on $D$ of the linear forms
in $\MM$. Hence $\rankD \MM\cup\II >\rankD\MM$.
\end{proof}

Denote by $M_K$ the set of places of $K$.
We choose normalized absolute values $|\cdot |_v$ $(v\in M_K)$ in such a way that if $v$ lies above $p\in\{\infty\}\cup\{{\rm primes}\}$, then 
$|x|_v=|x|_p^{[K_v:\Qq_p]/[K:\Qq ]}$ for $x\in\Qq$. These absolute values satisfy the product formula $\prod_{v\in M_K} |x|_v=1$ for $x\in K^*$. 
For a vector $\yv=(y_1\kdots y_r)\in K^r$, we define
\[
\|\yv\|_v:=\max_{1\leq i\leq r} |y_i|_v\ \ (v\in M_K),\ \ \ H(\yv ):=\prod_{v\in M_K}\|\yv\|_v.
\]
By the product formula, $H(\lambda\yv )=H(\yv )$ for $\yv\in K^r$, $\lambda\in K^*$.

For $\xv\in\zmprim$ and a subset $\II$ of $\LL_F$, we define 
\[
H_{\II}(\xv ):=\prod_{v\in M_K}\max_{\ell\in\II} |\ell (\xv )|_v.
\]

\begin{lem}\label{lem:4.3}
Let $\xv\in\zmprim$ with $\ell(\xv )\not= 0$ for $\ell\in\LL_F$ and let
$\II$, $\JJ$ be subsets of $\LL_F$ with $\II\cap\JJ\not= \emptyset$. Then
\[
H_{\II\cup\JJ}(\xv )\leq H_{\II}(\xv )\cdot H_{\JJ}(\xv ).
\]
\end{lem}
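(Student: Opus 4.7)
The plan is a one-line application of the product formula after a pointwise comparison at each place. Fix any $\ell_0\in\II\cap\JJ$, which is possible since $\II\cap\JJ\not=\emptyset$.

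First I would establish the local inequality
\[
\max_{\ell\in\II\cup\JJ}|\ell(\xv)|_v\cdot|\ell_0(\xv)|_v\le\Big(\max_{\ell\in\II}|\ell(\xv)|_v\Big)\Big(\max_{\ell\in\JJ}|\ell(\xv)|_v\Big)
\]
for every $v\in M_K$. To see this, pick $\ell^*\in\II\cup\JJ$ attaining the maximum on the left. If $\ell^*\in\II$, then $|\ell^*(\xv)|_v\le\max_{\ell\in\II}|\ell(\xv)|_v$, while $|\ell_0(\xv)|_v\le\max_{\ell\in\JJ}|\ell(\xv)|_v$ because $\ell_0\in\JJ$. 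The case $\ell^*\in\JJ$ is symmetric (using $\ell_0\in\II$). Either way the claimed local inequality holds.

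Next I would take the product over all $v\in M_K$. This yields
\[
H_{\II\cup\JJ}(\xv)\cdot\prod_{v\in M_K}|\ell_0(\xv)|_v\le H_{\II}(\xv)\cdot H_{\JJ}(\xv).
\]
Since $\ell_0\in\LL_F$ and $\ell(\xv)\not=0$ for every $\ell\in\LL_F$ by hypothesis, $\ell_0(\xv)\in K^*$, and the product formula $\prod_{v\in M_K}|\ell_0(\xv)|_v=1$ applies. Dividing through gives the desired inequality.

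There is no real obstacle here: the argument is essentially the standard Gauss-type trick (splitting a maximum over a union via an element common to both sets), combined with the product formula to dispose of the factor $\prod_v|\ell_0(\xv)|_v$. The only point to be careful about is that $\ell_0(\xv)$ really is nonzero, which is guaranteed by the hypothesis $\ell(\xv)\not=0$ for $\ell\in\LL_F$.
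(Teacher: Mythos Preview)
Your proof is correct and follows essentially the same approach as the paper. The paper divides through by $|\ell_0(\xv)|_v$ inside each maximum (using that $\ell_0\in\II\cap\JJ$ forces both normalized maxima to be $\geq 1$) and then applies the product formula, whereas you multiply by $|\ell_0(\xv)|_v$ on the left and apply the product formula at the end; these are two ways of writing the same pointwise inequality and global cancellation.
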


\begin{proof}
Let $\ell_0\in\II\cap\JJ$. Then by the product formula,
\begin{eqnarray*}
H_{\II\cup\JJ}(\xv )&=&
\prod_{v\in M_K}\max_{\ell\in\II\cup\JJ} |\ell (\xv )/\ell_0(\xv )|_v
\\
&\leq& \Big(\prod_{v\in M_K}\max_{\ell\in\II} |\ell (\xv )/\ell_0(\xv )|_v\Big)\cdot\Big(\prod_{v\in M_K}\max_{\ell\in\JJ} |\ell (\xv )/\ell_0(\xv )|_v\Big)
\\
&=&H_{\II}(\xv )\cdot H_{\JJ}(\xv ).
\end{eqnarray*}
\end{proof}

\begin{lem}\label{lem:4.4}
Let $D$ be a linear subspace of $\Qq^m$ of dimension $\geq 2$ on which
none of the linear forms in $\LL$ vanishes identically.
Then for every $\xv\in\zmprim\cap D$ with $\ell (\xv )\not= 0$ for $\ell\in\LL_F$,
there is a subset $\II$ of $\LL_F$ that is minimally linearly dependent
on $D$ 
such that
$H_{\II}(\xv )\gg_{F,D} \|\xv\|^{1/(m-1)}$.
\end{lem}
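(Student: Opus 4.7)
The plan is to build, step by step, a subset $\MM \subseteq \LL_F$ whose $D$-rank reaches $d := \dim D$ by iterating Lemma \ref{lem:4.2}, control $H_{\MM}(\xv)$ from above by the heights of the pieces added at each stage using Lemma \ref{lem:4.3}, bound $H_{\MM}(\xv)$ from below using that $\MM$ contains $d$ forms linearly independent on $D$, and finally extract the desired minimally linearly dependent subset by pigeonhole.

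Fix any $\ell_0 \in \LL_F$; by hypothesis $\ell_0$ does not vanish on $D$, so $\rankD\{\ell_0\} = 1$. Set $\MM_0 := \{\ell_0\}$. So long as $\rankD \MM_j < d$, invoke Lemma \ref{lem:4.2} to obtain a subset $\II_{j+1} \subseteq \LL_F$ that is minimally linearly dependent on $D$, has an element in common with $\MM_j$, and satisfies $\rankD(\MM_j \cup \II_{j+1}) > \rankD \MM_j$; put $\MM_{j+1} := \MM_j \cup \II_{j+1}$. After at most $t \leq d - 1$ steps we reach $\MM_t$ with $\rankD \MM_t = d$. Because $\MM_j \cap \II_{j+1} \neq \emptyset$, Lemma \ref{lem:4.3} applies iteratively, and together with $H_{\{\ell_0\}}(\xv) = \prod_{v \in M_K} |\ell_0(\xv)|_v = 1$ (by the product formula applied to $\ell_0(\xv) \in K^*$) yields
\[
H_{\MM_t}(\xv) \leq \prod_{j=1}^{t} H_{\II_j}(\xv).
\]

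For the lower bound, choose $\ell_{i_1}, \ldots, \ell_{i_d} \in \MM_t$ that are linearly independent on $D$. The $\Qq$-linear map $\phi : D \to K^d$, $\phi(\xv) := (\ell_{i_1}(\xv), \ldots, \ell_{i_d}(\xv))$, is an isomorphism onto its image, so the absolute multiplicative heights of $\xv$ and $\phi(\xv)$ differ by a multiplicative constant depending only on the chosen $\ell_{i_k}$ and on a fixed $\Zz$-basis of $D \cap \Zz^m$. Since $\xv \in \zmprim$ has absolute multiplicative height $\|\xv\|$, we deduce
\[
H_{\MM_t}(\xv) \geq \prod_{v \in M_K} \max_{1 \leq k \leq d} |\ell_{i_k}(\xv)|_v \gg_{F,D} \|\xv\|.
\]
Combining the two estimates and applying pigeonhole to the $t$ factors produces some $\II := \II_j$ with
\[
H_\II(\xv) \gg_{F,D} \|\xv\|^{1/t} \geq \|\xv\|^{1/(m-1)},
\]
the last inequality using $t \leq d - 1 \leq m - 1$ and $\|\xv\| \geq 1$; this $\II$ is minimally linearly dependent on $D$ by construction.

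The only technical care needed is that the height-comparison constant in the lower bound $\gg_{F,D} \|\xv\|$ be uniform in $\xv$ and independent of the (finitely many) choices made along the iteration: since the collection of $d$-element subsets of $\LL_F$ that are linearly independent on $D$ is finite, one can take the worst such constant, depending only on $F$ and $D$.
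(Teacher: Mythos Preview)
Your proof is correct and follows essentially the same approach as the paper's: build up a chain of minimally linearly dependent sets via Lemma \ref{lem:4.2} until the union has full $D$-rank, bound $H$ of the union below by $\|\xv\|$ (since $X_1,\ldots,X_m$ are linear combinations modulo $D^*$ of the forms in the union), bound it above by the product of the $H_{\II_j}$ via Lemma \ref{lem:4.3}, and take the maximum. The only cosmetic difference is that you start from $\MM_0=\{\ell_0\}$ and invoke $H_{\{\ell_0\}}(\xv)=1$, whereas the paper absorbs $\ell_0$ directly into $\II_1$; and you spell out more carefully why the implied constant is uniform over the finitely many possible choices of independent $d$-subsets.
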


\begin{proof} Let $\xv\in\zmprim\cap D$ with $\ell (\xv )\not= 0$
for $\ell\in\LL_F$. 
Start with a linear form $\ell_0\in\LL_F$.
By Lemma \ref{lem:4.2}, there is 
a subset $\II_1$ of $\LL_F$ that is minimally
linearly dependent on $D$, that contains $\ell_0$ and for which $\rankD\II_1\geq 2$. 
Using Lemma \ref{lem:4.2}, we choose inductively 
subsets $\II_2$, $\II_3$, $\ldots$ of $\LL_F$
that are minimally linearly dependent on $D$ as follows:
if $\rankD\II_1\cup\cdots\cup\II_t<\dim D$, we choose $\II_{t+1}$ such that
$\II_{t+1}\cap (\II_1\cup\cdots \cup\II_t)\not=\emptyset$ and
$\rankD\II_1\cup\cdots \cup\II_{t+1}>\rankD\II_1\cup\cdots\cup\II_t$.
It is clear that for some $s\leq \dim D-1\leq m-1$ we get 
$\rankD\II_1\cup\cdots \cup\II_s=\dim D$.
Then 
$X_1\kdots X_m$ can be expressed as linear combinations modulo $D^*$
of the linear forms in $\II_1\cup\cdots \cup\II_s$, implying
$\|\xv\|=H(\xv )\ll_{F,D} H_{\II_1\cup\cdots\cup\II_s}(\xv )$.
Now from Lemma \ref{lem:4.3} we infer
\[
\|\xv\|\ll_{F,D}  \prod_{i=1}^s H_{\II_i}(\xv ) 
\leq
\max_{1\leq i\leq s} H_{\II_i}(\xv )^s\leq \max_{1\leq i\leq s} H_{\II_i}(\xv )^{m-1}.
\]
\end{proof} 

\begin{proof}[Proof of Theorem \ref{thm2.0}]
Without loss of generality, we assume that the linear forms in \eqref{eq1.0}
have their coefficients in the ring of integers $O_K$ of $K$.
We prove by induction on $d$ that if $D$ is a linear subspace
of $\Qq^m$ of dimension $d$ on which none of the linear forms in $\LL$
vanishes identically, then \eqref{eq1.0bb} has only finitely many solutions
in $\zmprim\cap D$. For $d=1$ this is clear.

Assume that $d\geq 2$ and that our assertion holds true for all 
linear subspaces of $\Qq^m$ of dimension smaller than $d$.
Let $D$ be a linear subspace of $\Qq^m$ of dimension $d$ on which none of the
linear forms in $\LL$ vanishes identically and
let $0<\epsilon <\medfrac{1}{m-1}$. 
Take $\xv\in\zmprim\cap D$ satisfying \eqref{eq1.0bb}.
Choose a subset $\II$ of $\LL_F$ that is minimally linearly dependent
on $D$ such that
\begin{equation}\label{eq:4.x}
H_{\II}(\xv )\gg_{F,D} \|\xv\|^{1/(m-1)}.
\end{equation}
Let $T$ be the set of places
of $K$ lying above the places in $\Si$. For $v\in T$, choose $\ell_v\in \II$
such that $|\ell_v(\xv )|_v=\max_{\ell\in\II} |\ell (\xv )|_v$,
and let $\II_v:=\II\setminus\{ \ell_v\}$.
We have $|\ell_v(\xv )|_v\ll_F 1$
for $\ell\in\LL_F$, $v\in M_K\setminus T$ since $\xv\in\Zz^m$. So
by the product formula,
$\prod_{v\in T}|\ell (\xv )|_v\gg_F 1$ for $\ell\in\LL_F$. Together
with \eqref{eq:4.x} this implies
\begin{eqnarray*}
\prod_{v\in T}\prod_{\ell\in\II}|\ell (\xv )|_v
&\ll_F&\prod_{v\in T} |F(\xv )|_v \leq \|\xv\|^{(1/(m-1))-\epsilon}
\\
&\ll_{F,D}& H_{\II}(\xv )^{1-(m-1)\epsilon},
\end{eqnarray*}
and subsequently, dividing both sides by $\prod_{v\in T} |\ell_v(\xv )|_v$,
\begin{equation}\label{eq:4.y}
\prod_{v\in T}\prod_{\ell\in\II_v}|\ell (\xv )|_v\ll_{F,D} H_{\II}(\xv )^{-(m-1)\epsilon}.
\end{equation}
Write $\II =\{ \ell_0\kdots\ell_u\}$.
Then $\ell_0\equiv \beta_1\ell_1+\cdots +\beta_u\ell_u\, ({\rm mod}\, D^*)$ with $\beta_i\not=0$ for $i=1\kdots u$
Put $y_i:=\ell_i(\xv )$ for $i=1\kdots u$,
and $\yv =(y_1\kdots y_u)$. Then $\yv\in O_K^u$.
We can express $\ell (\xv)$ ($\ell\in \II_v$) as $u$ linearly independent
linear forms in $\yv$, say $\ell_{1,v}(\yv )\kdots\ell_{u,v}(\yv )$,
taken from the set $\{ Y_1\kdots Y_u,\beta_1Y_1+\cdots +\beta_uY_u\}$.
Now \eqref{eq:4.y}
translates into
\[
\prod_{v\in T}\prod_{i=1}^u|\ell_{i,v}(\yv )|_v\ll_{F,D} H(\yv)^{-(m-1)\epsilon},\ \ \ \yv\in O_K^u .
\]
Thus, we can apply the $p$-adic Subspace Theorem
\cite{schl77a},
and conclude that the
vectors $\yv$ lie in finitely many proper linear subspaces of $K^u$.
It follows that the solutions $\xv\in\zmprim\cap D$ of \eqref{eq1.0bb},
corresponding to the same sets
$\II_v$ ($v\in T$) in \eqref{eq:4.y}, lie in 
finitely many proper linear subspaces of $D$. Since there are
only finitely many possibilities for the sets $\II_v$, it follows that the solutions
$\xv\in\zmprim\cap D$ 
altogether lie in only finitely many proper linear subspaces of $D$. By applying the induction hypothesis to each of these spaces,
it follows that \eqref{eq1.0bb} has only finitely many solutions in
$\zmprim\cap D$. This completes our proof.
\end{proof}

\section{Proof of Theorem \ref{thm2.1}}\label{section5}

Let $F\in\Zz [X_1\kdots X_m]$ be a decomposable form in $m\geq 2$ variables
with a factorization as in \eqref{eq1.0}, satisfying \eqref{eq1.0e}
and \eqref{eq1.0d}. Our first goal is to prove that $c(F)<1$.
We have used some arguments from \cite[\S3.3]{liu15}.
We start with some preparations.

For a subset $\MM$ of $\LL_F$ we put $|\MM |:=\sum_{\ell\in\MM} e(\ell )$.
Let $D$ be a linear subspace of $\Qq^m$ of dimension $d\geq 2$. 
A subset $\MM$ of $\LL_F$ is called \emph{$D$-critical} if
$q_D(\MM )$ is maximal among all non-empty subsets of $\LL_F$. 
A $D$-critical subset is called \emph{minimal} 
if none of its proper
subsets is $D$-critical.

\begin{lem}\label{lem5.1}
Let $\MM_1$, $\MM_2$ be two $D$-critical subsets of $\LL_F$.
\begin{itemize}
\item[(i)] Assume that $\MM_1$, $\MM_2$ are minimal and $\MM_1\not=\MM_2$.\\
Then $\MM_1\cap\MM_2=\emptyset$.

\item[(ii)] Assume that $\MM_1\cap\MM_2=\emptyset$. Then
$\MM_1\cup\MM_2$ is $D$-critical.
\end{itemize}
\end{lem}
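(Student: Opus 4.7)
The plan is to exploit two standard features of the quotient $q_D(\MM)=|\MM|/\rankD\MM$: the function $|\MM|:=\sum_{\ell\in\MM}e(\ell)$ is additive on disjoint unions and obeys inclusion--exclusion, while $\rankD$ is subadditive on unions and, more importantly, satisfies the submodular inequality
\[
\rankD(\MM_1\cup\MM_2)+\rankD(\MM_1\cap\MM_2)\le \rankD\MM_1+\rankD\MM_2.
\]
This is nothing but the dimension formula applied to the $K$-spans of $\MM_1,\MM_2$ viewed modulo $D^*$, using that the span of the intersection is contained in the intersection of spans. Write $q^\ast$ for the common value $q_D(\MM_1)=q_D(\MM_2)$, which by hypothesis is the maximum of $q_D$ over all non-empty subsets of $\LL_F$.

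For part (ii), I would assume $\MM_1\cap\MM_2=\emptyset$ and then compute directly: $|\MM_1\cup\MM_2|=|\MM_1|+|\MM_2|=q^\ast(\rankD\MM_1+\rankD\MM_2)$, while subadditivity gives $\rankD(\MM_1\cup\MM_2)\le\rankD\MM_1+\rankD\MM_2$. Dividing yields $q_D(\MM_1\cup\MM_2)\ge q^\ast$, and maximality forces equality, so $\MM_1\cup\MM_2$ is $D$-critical.

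For part (i), I would argue by contradiction: suppose $\MM_0:=\MM_1\cap\MM_2\ne\emptyset$. Combining $|\MM_1|+|\MM_2|=|\MM_1\cup\MM_2|+|\MM_0|$, the criticality identities $|\MM_i|=q^\ast\rankD\MM_i$, the upper bounds $|\MM_1\cup\MM_2|\le q^\ast\rankD(\MM_1\cup\MM_2)$ and $|\MM_0|\le q^\ast\rankD\MM_0$ (from maximality of $q^\ast$), and finally the submodular inequality, one obtains
\[
q^\ast(\rankD\MM_1+\rankD\MM_2)\le q^\ast\bigl(\rankD(\MM_1\cup\MM_2)+\rankD\MM_0\bigr)\le q^\ast(\rankD\MM_1+\rankD\MM_2),
\]
so equality holds throughout. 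In particular $|\MM_0|=q^\ast\rankD\MM_0$, i.e., $\MM_0$ is itself $D$-critical. But $\MM_0\subseteq\MM_1$ and $\MM_0\subseteq\MM_2$, so the minimality of $\MM_1$ and $\MM_2$ forces $\MM_0=\MM_1=\MM_2$, contradicting $\MM_1\ne\MM_2$.

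The only real technical point is verifying the submodular inequality for $\rankD$; once that is in hand, both parts reduce to tracking the single parameter $q^\ast$ through a short chain of (in)equalities, so I do not anticipate any further obstacles.
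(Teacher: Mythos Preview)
Your proof is correct and follows essentially the same approach as the paper: both hinge on the submodular inequality for $\rankD$ combined with inclusion--exclusion for $|\cdot|$, and both parts are handled by the same short chain of inequalities you describe. The only difference is cosmetic: in part (i) the paper isolates $\rankD(\MM_1\cap\MM_2)$ directly to get $q_D(\MM_1\cap\MM_2)\ge q_0$ and then declares this ``clearly impossible,'' whereas you spell out the minimality contradiction explicitly---which is arguably cleaner, since the impossibility really does rest on minimality.
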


\begin{proof}
We use that for any two subsets $\NN_1$, $\NN_2$ of $\LL_F$ we have
\begin{equation}\label{eq5.1}
\left\{
\begin{array}{l}
\rankD \NN_1\cap\NN_2+\rankD \NN_1\cup\NN_2\leq\rankD\NN_1+\rankD\NN_2,
\\[0.1cm] 
|\NN_1\cap\NN_2|+|\NN_1\cup\NN_2|=|\NN_1|+|\NN_2|.
\end{array}\right.
\end{equation}

(i) Let $q_0:=\max_{\MM} q_D(\MM)$, where the maximum is taken
over all non-empty subsets $\MM$ of $\LL_F$.
Assume $\MM_1\cap\MM_2\not=\emptyset$. Then by \eqref{eq5.1},
\begin{eqnarray*}
&&\rankD\MM_1\cap\MM_2
\\
&&\quad \leq\rankD\MM_1+\rankD\MM_2-\rankD\MM_1\cup\MM_2
\\
&&\quad =q_0^{-1}|\MM_1|+q_0^{-1}|\MM_2|-q_D(\MM_1\cup\MM_2)^{-1}|\MM_1\cup\MM_2|
\\
&&\quad \leq q_0^{-1}(|\MM_1|+|\MM_2|-|\MM_1\cup\MM_2|)=q_0^{-1}|\MM_1\cup\MM_2|,
\end{eqnarray*}
implying $q_D(\MM_1\cap\MM_2)\geq q_0$. This is clearly impossible.

(ii) Again by \eqref{eq5.1}, 
\begin{eqnarray*}
\rankD\MM_1\cup\MM_2&\leq&\rankD\MM_1+\rankD\MM_2
\\
&=&q_0^{-1}(|\MM_1|+|\MM_2|)=q_0^{-1}|\MM_1\cup\MM_2|,
\end{eqnarray*}
which implies $q_D(\MM_1\cup\MM_2)\geq q_0$. Hence $\MM_1\cup\MM_2$
is $D$-critical.
\end{proof}

\begin{lem}\label{lem5.2}
We have $c(F)<1$.
\end{lem}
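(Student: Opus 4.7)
My plan is to argue by contradiction. Suppose $c(F)\ge 1$; then there exists a $\Qq$-subspace $D\subseteq\Qq^m$ with $d:=\dim D\ge 2$ and some proper non-empty $\MM\propersubset\LL_F$ with $\rankD\MM<d$ satisfying $q_D(\MM)\ge q_D(\LL_F)=|\LL_F|/d$. Set $q_0:=\max_{\NN\neq\emptyset}q_D(\NN)$; then $q_0\ge q_D(\LL_F)$. I would replace $\MM$ by a proper $D$-critical subset of $\LL_F$ that is minimal in the sense of Lemma \ref{lem5.1}; such an $\MM$ exists because $c(F)\ge 1$ forces \emph{some} proper $D$-critical subset to exist, and any inclusion-minimal such subset is automatically minimal in the sense of that lemma. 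By Lemma \ref{lem5.1}(i) the distinct $\galk$-translates of $\MM$ are pairwise disjoint, so by iterated application of Lemma \ref{lem5.1}(ii) the $\galk$-closure
\[
\tilde{\MM}:=\bigcup_{\sigma\in\galk}\sigma(\MM)
\]
is again $D$-critical.

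The first key observation, used in the easier of two sub-cases, is that condition \eqref{eq1.0d} forces every non-empty $\galk$-symmetric subset $\NN\subseteq\LL_F$ to satisfy $\rankD\NN=d$. Indeed, the common zero locus of $\NN$ in $D_K:=D\otimes_\Qq K$ is a $K$-subspace of dimension $d-\rankD\NN$; it is $\galk$-stable because both $\NN$ and $D_K$ are, so it descends to a $\Qq$-subspace of $D$, and any non-zero point in it would be a non-zero rational zero of $F$. Applied to $\tilde{\MM}$, this resolves the sub-case $\tilde{\MM}\propersubset\LL_F$: there $q_D(\tilde{\MM})=q_0\ge|\LL_F|/d>|\tilde{\MM}|/d$ forces $\rankD\tilde{\MM}<d$, contradicting the observation.

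The main obstacle is the remaining sub-case $\tilde{\MM}=\LL_F$, where $\LL_F=\MM_1\sqcup\cdots\sqcup\MM_r$ with $r\ge 2$ and each $\MM_i$ a $\galk$-conjugate of $\MM$. Using $q_D(\MM_i)=q_0$ and $q_D(\LL_F)=q_0$ I would compute
\[
\sum_{i=1}^r \rankD\MM_i=\sum_{i=1}^r\frac{|\MM_i|}{q_0}=\frac{|\LL_F|}{q_0}=d=\rankD\LL_F,
\]
so the subspaces $U_i\subseteq V_D^{\vee}$ (the dual of $D_K$) spanned by the restrictions of the forms in $\MM_i$ give an internal direct sum decomposition $V_D^{\vee}=U_1\oplus\cdots\oplus U_r$. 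I would then apply condition \eqref{eq1.0e} to the $\galk$-proper set $\MM$: it yields some $\ell_0\in\LL_F$ with $\ell_0\in\sum_i [\MM_i]\cap[\LL_F\setminus\MM_i]$. The directness of the decomposition forces each summand to restrict to $U_i\cap\bigoplus_{j\neq i}U_j=0$ in $V_D^{\vee}$; hence every such summand lies in $D^*$, so $\ell_0\in D^*$, i.e., $\ell_0$ vanishes identically on $D$. Choosing any non-zero $\xv\in D$ then gives $F(\xv)=0$, contradicting \eqref{eq1.0d}. The subtlest steps are verifying the directness of the decomposition $V_D^{\vee}=\bigoplus_i U_i$ and packaging \eqref{eq1.0e} into the resulting linear-algebraic contradiction.
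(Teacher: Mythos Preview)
Your proof is correct and follows essentially the same approach as the paper's: both take a minimal $D$-critical subset, use Lemma \ref{lem5.1} to show its $\galk$-orbit is a disjoint $D$-critical family, invoke \eqref{eq1.0d} via Galois descent to force this family to cover $\LL_F$, and then derive a contradiction from \eqref{eq1.0e}. The only cosmetic difference is in the final step: you phrase the contradiction through the direct sum decomposition $V_D^{\vee}=\bigoplus_i U_i$ to conclude $\ell_0\in D^*$, whereas the paper obtains the equivalent numerical contradiction $d=\rankD\LL_F<\rankD\MM_i+\rankD(\LL_F\setminus\MM_i)=d$.
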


\begin{proof}
We have to prove that for every $\Qq$-linear subspace $D$ of $\Qq^m$
of dimension $\geq 2$ we have $q_D(F)<q_D(\LL_F)=n/d$, 
where $d=\dim D$ and $n=\deg F$.
Assume that for some of these subspaces $D$ we have $q_D(F)\geq n/d$,
i.e., there is a subset $\MM_1$ of $\LL_F$ with $\rankD\MM_1 <d$
and $q_D(\MM_1)\geq n/d$. Without loss of generality, we take for $\MM_1$
a minimal $D$-critical subset of $\LL_F$.

Let $D^*$ be the $K$-vector space of linear forms in $K[X_1\kdots X_m]$
that vanish identically on $D$. 
Then for each $\sigma\in\galk$, $\sigma (\MM_1 )$ is also a minimal
$D$-critical set since $\rankD\sigma (\MM_1) =\rankD\MM_1$ and 
$|\sigma (\MM_1 )|=|\MM_1 |$, and so by Lemma \ref{lem5.1}, we have either
$\sigma (\MM_1 )=\MM_1$ or $\sigma (\MM_1)\cap\MM_1=\emptyset$.
That is, $\MM_1$ is $\galk$-proper. Let $\MM_1\kdots\MM_t$ be the distinct
sets among the $\sigma (\MM_1)$, $\sigma\in\galk$. We first prove that
\begin{equation}\label{eq5.2} 
\LL_F=\MM_1\cup\MM_2\cup\cdots\cup\MM_t .
\end{equation}
Suppose the contrary, i.e., 
$\MM_0:=\MM_1\cup\cdots\cup\MM_t\propersubset\LL_F$.
By Lemma \ref{lem5.1}, the set $\MM_0$ is $D$-critical,
hence $q_D (\MM_0)\geq d/n$, which implies $\rankD\MM_0<d$.
This, together with the fact that $\MM_0$ is $\galk$-symmetric, 
implies that there is a non-zero $\xv\in D$
with $\ell (\xv )=0$ for $\ell\in\MM_0$. This clearly contradicts \eqref{eq1.0d}.
So indeed, \eqref{eq5.2} holds.
By Lemma \ref{lem5.1} (ii), any non-empty union $\MM$ 
of some of the sets $\MM_i$ is $D$-critical, implying
$q_D(\MM )=q_D(\LL_F)=d/n$. 

As observed above, the set $\MM_1$ is $\galk$-proper. So by assumption
\eqref{eq1.0e}, the $K$-vector space
\[
\sum_{\sigma\in\galk}[\sigma (\MM_1)]\cap[\LL_F\setminus\sigma (\MM_1 )]
=\sum_{i=1}^t [\MM_i]\cap [\LL_F\setminus\MM_i]
\]
contains a linear form from $\LL_F$. By assumption \eqref{eq1.0d},
this form does not lie in $D^*$. Hence there is $i\in\{ 1\kdots t\}$
such that $[\MM_i]\cap [\LL_F\setminus\MM_i]$ contains a linear form
not in $D^*$.
Moreover, $q_D(\MM_i)=q_D(\LL_F\setminus\MM_i)=d/n$. Hence
\begin{eqnarray*}
d=\rankD\LL_F&<&\rankD\MM_i+\rankD (\LL_F\setminus\MM_i)
\\
&=&\medfrac{d}{n}(|\MM_i|+|\LL_F\setminus\MM_i|)=d,
\end{eqnarray*}
which is impossible.
Thus, our assumption that $q_D(F)\geq n/d$ is false.   
\end{proof}

We need a few other, much deeper auxiliary results, which are taken
from the literature. We keep the notation and assumptions from
Theorem \ref{thm2.1}.
For each $p\in\Si$, we choose an extension of $|\cdot |_p$ to the
splitting field $K$ of $F$.

\begin{lem}\label{lem5.3}
Let $D$ be a linear subspace of $\Qq^m$ of dimension $d\geq 2$.
Then for every $\xv\in\zmprim\cap D$, there are subsets $\LL_p$
$(p\in\Si )$ of $\LL_F$ of cardinality $d$ that are linearly independent
on $D$, such that
\begin{eqnarray}\label{eq5.3}
&&\prod_{p\in\Si}\prod_{\ell\in\LL_p} |\ell (\xv )|_p
\\
\nonumber
&&\qquad
\ll_{F,S,D} 
\Big(\big(\prod_{p\in\Si} |F(\xv )|_p\big)\cdot \|\xv\|^{-(n-dq_D(F))}\Big)^{1/q_D(F)}.
\end{eqnarray}
\end{lem}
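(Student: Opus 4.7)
The plan is to prove a per-place inequality for each $p\in\Si$ and then multiply. Set $T:=q_D(F)$, $M_p:=\max_{\ell\in\LL_F}|\ell(\xv)|_p$, and $\lambda_p(\ell):=\log M_p-\log|\ell(\xv)|_p\ge 0$. Since $\rankD\LL_F=d$ and the forms in $\LL_F$ have coefficients in $O_K$, one has $M_\infty\asymp_{F,D}\|\xv\|$ for $\xv\in D$, while $M_p\gg_{F,D}1$ for each finite $p$ (because some coordinate $x_i$ of $\xv\in\zmprim$ has $|x_i|_p=1$, and $X_i$ is a $K$-combination modulo $D^*$ of any $d$ forms from $\LL_F$ linearly independent on $D$). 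Hence $\prod_{p\in\Si}M_p\gg_{F,S,D}\|\xv\|$. Taking logarithms and using $|F(\xv)|_p=|c|_p\prod_\ell|\ell(\xv)|_p^{e(\ell)}$, the conclusion reduces, place by place, to establishing
\[\sum_\ell e(\ell)\lambda_p(\ell)\le T\sum_{\ell\in\LL_p}\lambda_p(\ell)+O_{F,D,p}(1)\]
for a suitable $\LL_p\subseteq\LL_F$ of cardinality $d$ linearly independent on $D$.

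I would choose $\LL_p$ greedily: sort $\LL_F$ so that $|\ell(\xv)|_p$ is nondecreasing and accept each $\ell$ whenever it preserves linear independence on $D$. Setting $\MM_\tau:=\{\ell\in\LL_F:\lambda_p(\ell)\ge\tau\}$, $e(\tau):=\sum_{\ell\in\MM_\tau}e(\ell)$, and $r(\tau):=\rankD\MM_\tau$, one has $\sum_\ell e(\ell)\lambda_p(\ell)=\int_0^\infty e(\tau)\,d\tau$ and, by the standard matroid greedy identity, $\sum_{\ell\in\LL_p}\lambda_p(\ell)=\int_0^\infty r(\tau)\,d\tau$. Since some $\ell\in\LL_F$ attains $M_p$, $\MM_\tau\subsetneq\LL_F$ for every $\tau>0$, so the required inequality reduces to $\int_0^\infty(e(\tau)-Tr(\tau))\,d\tau\ll_{F,D,p}1$.

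I would estimate this integral by cases. If $\rankD\MM_\tau<d$, the definition of $T=q_D(F)$ gives $e(\tau)=|\MM_\tau|\le T\cdot\rankD\MM_\tau=Tr(\tau)$, so the integrand is nonpositive. If $\rankD\MM_\tau=d$, pick $\NN\subseteq\MM_\tau$ of size $d$ linearly independent on $D$; every $\ell\in\LL_F$ then satisfies $\ell\equiv\sum_{\ell'\in\NN}a_{\ell,\ell',\NN}\,\ell'\pmod{D^*}$ with $a_{\ell,\ell',\NN}\in K$ depending only on $F,D,\NN$. Evaluating at $\xv\in D$ (where forms in $D^*$ vanish) and applying this to any $\ell_0\in\LL_F$ with $|\ell_0(\xv)|_p=M_p$ yields
\[M_p=|\ell_0(\xv)|_p\le C_1\max_{\ell'\in\NN}|a_{\ell_0,\ell',\NN}|_p\cdot\max_{\ell'\in\NN}|\ell'(\xv)|_p\le C_{F,D,p}\,M_p\,e^{-\tau},\]
with $C_1\in\{1,d\}$ a triangle-inequality factor. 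Hence $\tau\le\log C_{F,D,p}=:\tau_0$, and since there are only finitely many choices of $\NN$ and $\ell_0$, $\tau_0$ is uniform. So this case contributes at most $n\tau_0$ to the integral, which yields the per-place inequality. Multiplying the per-place bounds over $p\in\Si$ and invoking $\prod_pM_p\gg\|\xv\|$ then gives the lemma.

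The main obstacle is the second case above, where $\rankD\MM_\tau=d$ but $\MM_\tau\subsetneq\LL_F$: here the naive rank bound $e(\tau)\le Tr(\tau)$ can fail, and the estimate on $\tau$ crucially exploits both the restriction $\xv\in D$ and the linear dependencies of $\LL_F$ modulo $D^*$ to force $\tau$ to stay bounded.
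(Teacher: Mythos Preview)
Your argument is correct. The per-place reduction is right (the $\log M_p$ terms cancel exactly, and since under the standing hypotheses \eqref{eq1.0e}, \eqref{eq1.0d} Lemma~\ref{lem5.2} gives $q_D(F)<n/d$, the inequality $\prod_p M_p\gg\|\xv\|$ goes in the correct direction), the greedy/layer-cake identity $\sum_{\ell\in\LL_p}\lambda_p(\ell)=\int_0^\infty r(\tau)\,d\tau$ is the standard matroid fact, and your two-case treatment of the integrand is sound: for $r(\tau)<d$ the definition of $q_D(F)$ applies directly, and for $r(\tau)=d$ the linear-combination trick with the maximizing form $\ell_0$ bounds $\tau$ uniformly.

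The paper takes a different route: it does not argue directly at all, but cites \cite[Lemma~2.2.4]{liu15} for the case $D=\Qq^m$ and then reduces an arbitrary $D$ to that case by choosing a $\Zz$-basis of $\Zz^m\cap D$ and pulling $F$ back. Your proof is thus a self-contained substitute for the cited lemma; the greedy-plus-level-set technique you use is in fact the standard machinery behind Liu's (and earlier, Thunder's and Evertse's) treatment of decomposable form inequalities, so in substance you are supplying what the paper outsources. The advantage of your write-up is that it makes the role of $q_D(F)$ transparent and avoids the external reference; the paper's reduction has the minor advantage of showing that the general-$D$ statement is formally no harder than the full-space case.
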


\begin{proof}
In the case $D=\Qq^m$ this is a special case of \cite[Lemma 2.2.4]{liu15}.
The case of arbitrary $D$ can be reduced to this by choosing a $\Zz$-basis
$\{ \av_1\kdots\av_d\}$ of $\Zz^m\cap D$ and working with the
decomposable form $F(\varphi (\yv))$, where 
$\varphi (\yv)=\sum_{i=1}^d y_i\av_i$.
Note that $\varphi$ establishes a bijection between $\Zz^d_{{\rm prim}}$
and $\zmprim\cap D$.
\end{proof}

\begin{lem}[$p$-adic Minkowski]\label{lem5.4}
Let $p$ be a prime number.
Further, let $\ell_1\kdots\ell_m$ be linearly independent linear forms
in $m$ variables with real coefficients and $\ell_{1,p}\kdots \ell_{m',p}$
$(m'\leq m$) linearly independent linear forms in $m$ variables with coefficients
in $\Qq_p$. Then there are constants $\gamma_1,\gamma_2>1$,
depending only on $p$, $m$, $\ell_1\kdots\ell_m$, $\ell_{1,p}\kdots\ell_{m',p}$,
such that if $A_1\kdots A_m$, $B_1\kdots B_{m'}$ are any positive reals with
\begin{equation}\label{eq5.4}
A_1\cdots A_mB_1\cdots B_{m'}\geq \gamma_1,\ \ \ 
B_i\leq \gamma_2^{-1}\ \mbox{for }i=1\kdots m'
\end{equation}
then there is a non-zero $\xv\in\Zz^m$ with
\begin{equation}\label{eq5.5}
|\ell_i(\xv )|\leq A_i\ \mbox{for } i=1\kdots m,\ \ |\ell_{ip}(\xv )|_p\leq B_i\ \mbox{for }
i=1\kdots m'.
\end{equation}
\end{lem}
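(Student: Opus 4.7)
The strategy is to reduce the mixed archimedean/$p$-adic inequality system to an application of classical Minkowski's convex body theorem to a suitable sublattice of $\Zz^m$. First I would discretize the $p$-adic side: since $|\cdot|_p$ on $\Qq_p$ takes values in $\{0\}\cup p^{\Zz}$, replace each $B_i$ by $p^{-k_i}$, where $k_i:=\lfloor -\log_p B_i\rfloor$, so that $p^{-k_i}\le B_i<p\cdot p^{-k_i}$. Requiring $B_i\le\gamma_2^{-1}$ for a suitably large $\gamma_2$ (to be fixed below) will guarantee that each $k_i$ is at least as large as the ``bad'' exponents coming from the matrix of the $\ell_{i,p}$.

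Next I would set up the $p$-adic sublattice. After rescaling the $\ell_{i,p}$ (which only multiplies the $B_i$'s by constants absorbed into $\gamma_1,\gamma_2$), assume they have coefficients in $\Zz_p$, and let $A$ be the $m'\times m$ coefficient matrix. Writing $A=UDV$ with $U\in\GL_{m'}(\Zz_p)$, $V\in\GL_m(\Zz_p)$ and $D$ of Smith normal form $\mathrm{diag}(p^{a_1},\dots,p^{a_{m'}})$ (augmented by zero columns), the set
\[
\Lambda_p:=\{\yv\in\Zz_p^m:\, |\ell_{i,p}(\yv)|_p\le p^{-k_i},\ i=1,\ldots,m'\}
\]
is, after the $\Zz_p$-change of variables by $V$, described by $y'_i\in p^{k_i-a_i}\Zz_p$ for $i\le m'$ and $y'_i\in\Zz_p$ otherwise. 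Choose $\gamma_2:=\max_i p^{a_i}$, so that $B_i\le\gamma_2^{-1}$ forces $k_i\ge a_i$; then $[\Zz_p^m:\Lambda_p]=p^{\sum(k_i-a_i)}$. Now set $\Lambda:=\Lambda_p\cap\Zz^m$. Because $\Lambda_p$ is open in $\Zz_p^m$ and $\Zz^m$ is dense there, the natural map $\Zz^m/\Lambda\to\Zz_p^m/\Lambda_p$ is a bijection, giving
\[
[\Zz^m:\Lambda]=c_0\cdot p^{k_1+\cdots+k_{m'}},\qquad c_0:=p^{-(a_1+\cdots+a_{m'})}.
\]

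Finally I would apply classical Minkowski to the symmetric convex body
\[
C:=\{\xv\in\Rr^m:\, |\ell_i(\xv)|\le A_i,\ i=1,\ldots,m\},
\]
whose Lebesgue volume equals $2^m A_1\cdots A_m/|\det(\ell_1,\dots,\ell_m)|$, with respect to the lattice $\Lambda$ of covolume $[\Zz^m:\Lambda]$. Minkowski yields a non-zero point of $\Lambda$ in $C$ as soon as $A_1\cdots A_m\ge c_0|\det(\ell_1,\dots,\ell_m)|\cdot p^{k_1+\cdots+k_{m'}}$. Using $p^{k_i}\le p/B_i$ from the discretization, the right-hand side is at most $c_0|\det(\ell_1,\dots,\ell_m)|\,p^{m'}/(B_1\cdots B_{m'})$, so setting
\[
\gamma_1:=c_0\,|\det(\ell_1,\dots,\ell_m)|\,p^{m'}
\]
makes the hypothesis $A_1\cdots A_m B_1\cdots B_{m'}\ge\gamma_1$ sufficient. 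Any resulting non-zero $\xv\in\Lambda\cap C$ satisfies $|\ell_i(\xv)|\le A_i$ by definition of $C$ and $|\ell_{i,p}(\xv)|_p\le p^{-k_i}\le B_i$ by definition of $\Lambda$, as required.

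The only genuine obstacle is the index computation $[\Zz^m:\Lambda]=c_0\,p^{\sum k_i}$; everything else is a packaging of Minkowski. The Smith normal form over $\Zz_p$ handles it cleanly provided $k_i\ge a_i$, which is precisely why one must impose the upper bound $B_i\le\gamma_2^{-1}$. Without this, the matrix $A$ could have ``short'' directions interacting with the $B_i$-constraints in a way that breaks the clean multiplicative relation between the index and $\prod p^{k_i}$.
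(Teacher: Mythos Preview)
Your overall strategy---encode the $p$-adic constraints as a sublattice $\Lambda\subseteq\Zz^m$ and apply Minkowski's convex body theorem to it---is exactly the paper's approach. But your index computation via Smith normal form has a genuine gap. Writing $A=UDV$ and substituting $\yv'=V\yv$, the conditions $|\ell_{i,p}(\yv)|_p\le p^{-k_i}$ become $|(UD\yv')_i|_p\le p^{-k_i}$; the left factor $U\in\GL_{m'}(\Zz_p)$ mixes the rows and cannot simply be dropped unless all the $k_i$ coincide. Concretely, take $p=2$, $m=m'=2$, $\ell_{1,p}=X_1$, $\ell_{2,p}=X_1+2X_2$ (so $a_1=0$, $a_2=1$) and $(k_1,k_2)=(0,2)$: then $[\Zz_2^2:\Lambda_p]=4$, while your formula $p^{\sum(k_i-a_i)}$ gives $2$. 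Since you need an \emph{upper} bound on the index to choose $\gamma_1$, this underestimate goes in the wrong direction. (Separately, your $k_i$ should be $\lceil-\log_p B_i\rceil$ rather than the floor; with the floor one gets $p^{-k_i}\ge B_i$, not $\le$.)

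The paper sidesteps the row-mixing issue by first augmenting $\ell_{1,p},\ldots,\ell_{m',p}$ to a full linearly independent system $\ell_{1,p},\ldots,\ell_{m,p}$ over $\Qq_p$ and imposing the auxiliary constraints $|\ell_{i,p}(\xv)|_p\le\gamma_2^{-1}$ for $m'<i\le m$, together with $|x_j|_q\le 1$ for all primes $q\ne p$; then the $p$-adic linear map is invertible, the lattice is the preimage of a box under a fixed isomorphism, and its covolume is a constant times $\prod_{i\le m'}B_i^{-1}$ with no row-mixing to worry about. A quicker patch within your own framework: the matrix $A$ induces an injection $\Zz_p^m/\Lambda_p\hookrightarrow\Zz_p^{m'}/\prod_i p^{k_i}\Zz_p$, giving $[\Zz^m:\Lambda]\le p^{\sum k_i}\le p^{m'}/\prod_i B_i$, which is all Minkowski needs.
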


\begin{proof}
We augment 
$\ell_{1,p}\kdots\ell_{m',p}$ to a linearly independent set of $m$ linear forms
$\ell_{1,p}\kdots\ell_{m,p}$ with coefficients in $\Qq_p$.
Let $\CC$ be the symmetric convex body consisting of
those $\xv\in\Rr^m$ with 
\[
|\ell_i (\xv )|\leq A_i\ \mbox{for } i=1\kdots m
\]
and $\Lambda$ the lattice
consisting of those $\xv=(x_1\kdots x_m)\in\Qq^m$
such that
\begin{eqnarray*}
&&|\ell_{ip}(\xv )|_p\leq B_i\ \mbox{for } i=1\kdots m',
\\
&&|\ell_{i,p}(\xv )|_p\leq\gamma_2^{-1}\ \mbox{for } i=m'+1\kdots m,
\\
&&|x_i|_q\leq 1\ \mbox{for $i=1\kdots m$ and all primes $q\not= p$,}
\end{eqnarray*}
with $\gamma_2$ yet to be chosen.
By choosing
$\gamma_2$ sufficiently large, we can guarantee that $\Lambda\subseteq\Zz^m$
for all $B_1\kdots B_{m'}\leq \gamma_2^{-1}$ and by choosing $\gamma_1$
sufficiently large, we can guarantee that 
${\rm vol}(\CC )/\det\Lambda\geq 2^m$ for all $A_1\kdots A_m$
with $A_1\cdots A_mB_1\cdots B_{m'}\geq \gamma_1$. Minkowski's Theorem
implies that for such $A_i,B_i$ there is a non-zero $\xv\in\CC\cap\Lambda$. 
This $\xv$ satisfies \eqref{eq5.5} and lies in $\Zz^m$. 
\end{proof}

\begin{prop}\label{prop5.5}
Let $F\in\Zz [X_1\kdots X_m]$ be a decomposable form 
of degree $n$ with \eqref{eq1.0e}
and \eqref{eq1.0d}. Then 
the number of $\xv\in\zmprim$ with $\prod_{p\in\Si} |F(\xv )|_p\leq M$
is $\ll_{n,S} M^{m/n}$ as $M\to\infty$.
\end{prop}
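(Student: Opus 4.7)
The strategy is to combine a Mahler-type archimedean count with $p$-adic density estimates, summed adelically over the $S$-adic valuations of $F(\xv)$. By \eqref{eq1.0d} together with the decomposable structure of $F$, the set $\{\xv\in\Rr^m:|F(\xv)|\le 1\}$ has finite Lebesgue volume $V_{\infty}$, so that by homogeneity of degree $n$ one has $\mathrm{vol}\{|F|\le T\}=V_{\infty}T^{m/n}$. Applying a lattice point count in the spirit of Lemma~\ref{lem:3.2b} (generalized to dimension $m\ge 2$, for instance via Barroero--Widmer \cite{BarWid14}), this yields
\[
\#\{\xv\in\Lambda:|F(\xv)|\le T\}\ll V_{\infty}T^{m/n}/\det\Lambda+\mathrm{error},
\]
for any sublattice $\Lambda\subseteq\Zz^m$ and $T>0$, with the error controlled by the successive minima of $\Lambda$.

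Next, the $\xv\in\zmprim$ with $\prod_{p\in\Si}|F(\xv)|_p\le M$ are partitioned by the tuple $\vec{a}=(\nu_p(F(\xv)))_{p\in S}$. Setting $P=\prod p^{a_p}$, the condition becomes $|F(\xv)|\le MP$ together with $F(\xv)\equiv 0\pmod{p^{a_p}}$ for all $p\in S$. By the Chinese remainder theorem, such $\xv$ lie in residue classes modulo $P$, and within each coset of $P\Zz^m$ (which has determinant $P^m$) the archimedean Mahler bound yields a count $\ll(MP)^{m/n}/P^m$ per coset. The number of valid residue classes is $R_P=\prod_p R_{p^{a_p}}$ with $R_{p^a}$ counting primitive residues modulo $p^a$ annihilated by $F$; under \eqref{eq1.0e} a $p$-adic analog of the finite-volume estimate gives $R_{p^a}/p^{am}\ll p^{-am/n}$, i.e.\ $R_P\ll P^{m-m/n}$. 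Multiplying through, the main term for each fixed $\vec{a}$ is $\ll M^{m/n}$.

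The principal obstacle is the summation over $\vec{a}$: the per-tuple main term $M^{m/n}$ is independent of $\vec{a}$, and both this main term and the Mahler error term, summed naively over all tuples, would diverge. This is resolved by a finer analysis: for $P\gg M^{1/(n-1)}$ the main Mahler term per coset drops below $1$, so most cosets have count zero and one must bound the surviving cosets more carefully using the primitivity of $\xv$ and the actual geometry of the lattice $\xv_0+P\Zz^m\cap\{|F|\le MP\}$. A cleaner reformulation is to compute directly the adelic volume of
\[
\bigl\{(\xv_v)_{v\in\Si}\in\Rr^m\times\textstyle\prod_{p\in S}\Zz_p^m:\prod_v|F(\xv_v)|_v\le M\bigr\}
\]
and apply an adelic lattice point count to the diagonal embedding of $\zmprim$; the volume naturally produces the factor $M^{m/n}$, provided one controls the non-compact cusps coming from large $P$ by exploiting \eqref{eq1.0e}. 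This adelic bookkeeping, parallel to the arguments in Liu's thesis \cite{liu15}, is the technical heart of the proof.
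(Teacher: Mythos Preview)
The paper's own proof of this proposition is a two-line citation: Liu \cite[Theorem~2.1.3]{liu15} proves the bound under the hypotheses $c(F)<1$ and \eqref{eq1.0d}, and Lemma~\ref{lem5.2} shows that \eqref{eq1.0e} together with \eqref{eq1.0d} implies $c(F)<1$. The substance is therefore in Liu's thesis, which (as the paper notes) is a $p$-adic generalization of Thunder \cite{thunder01} and relies on a quantitative version of the $p$-adic Subspace Theorem, not on adelic Mahler volumes.

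Your sketch takes a genuinely different route: archimedean Mahler volume plus $p$-adic densities glued together by lattice-point counting in the style of Barroero--Widmer. The paper itself remarks (just before the statement of Theorem~\ref{thm2.1}) that such a geometry-of-numbers approach is feasible for the application to part~(iii) of Theorem~\ref{thm2.1}, avoiding Liu and the Subspace Theorem, but that it ``would have been much lengthier''. So the strategy is in principle viable for the weaker statement, though you have not carried it out.

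Two concrete gaps. First, the finiteness of the archimedean volume $V_{\infty}=\mathrm{vol}\{|F|\le 1\}$ does not follow from \eqref{eq1.0d} and the decomposable structure alone; what one actually needs is the combinatorial condition $c(F)<1$, i.e.\ Lemma~\ref{lem5.2}, which in turn uses \eqref{eq1.0e} in an essential way. The same applies to your claimed $p$-adic density bound $R_{p^a}\ll p^{a(m-m/n)}$: this is a $p$-adic analogue of ``Mahler volume $\ll p^{-am/n}$'' and again requires $c(F)<1$, not merely \eqref{eq1.0d}. Second, and more seriously, you correctly identify the summation over the valuation tuples $\vec a$ as the heart of the matter, but you do not resolve it; you defer to ``adelic bookkeeping, parallel to the arguments in Liu's thesis''. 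That deferral is misplaced: Liu's argument is not of this adelic-volume type at all, but proceeds via the quantitative Subspace Theorem. If you want to push through the geometry-of-numbers approach, you have to control the error terms in the lattice-point count uniformly over all cosets and all tuples $\vec a$, and this is precisely the lengthy part the paper chose to avoid by citing Liu.
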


\begin{proof} 
Liu proved this in his thesis for all decomposable forms $F$ with 
$c(F)<1$ and with \eqref{eq1.0d}, see \cite[Theorem 2.1.3]{liu15}.
As observed in Lemma \ref{lem5.2}, the condition $c(F)<1$ follows
from \eqref{eq1.0e} and \eqref{eq1.0d}.
Liu's theorem and its proof are a $p$-adic
generalization of Thunder's theorem \cite[Theorem 2]{thunder01} 
and its proof.
\end{proof}

\begin{proof}[Proof of Theorem \ref{thm2.1}]
(i). Let $0<\epsilon <1-c(F)$. We prove by induction on $d$ that if $D$ is any 
$d$-dimensional $\Qq$-linear subspace 
of $\Qq^m$, then $[F(\xv )]_S\ll_{F,S,D} |F(\xv )|^{c(F)+\epsilon}$ for all
$\xv\in\zmprim\cap D$. For $d=1$ this is clear. Let $d\geq 2$,
and assume the assertion is true for all linear subspaces of $\Qq^m$
of dimension $<d$. Let $D$ be a $\Qq$-linear subspace of $\Qq^m$
of dimension $d$. Take $\xv\in\zmprim\cap D$ for which
\begin{equation}\label{eq5.6}
[F(\xv )]_S\geq |F(\xv )|^{c(F)+\epsilon}.
\end{equation}
Then 
\[
\prod_{p\in\Si}|F(\xv )|_p =\frac{|F(\xv )|}{[F(\xv )]_S}
\leq |F(\xv )|^{1-c(F)-\epsilon}
\ll_{F,S,D} \|\xv\|^{n(1-c(F)-\epsilon)}.
\]
Take subsets $\LL_p$ ($p\in\Si$) of $\LL_F$ as in Lemma \ref{lem5.4}
and insert the above inequality into \eqref{eq5.3}.
Then since $c(F)\geq q_D(F)\cdot d/n$,
\begin{align*} 
\prod_{p\in\Si}\prod_{\ell\in\LL_p} |\ell (\xv )|_p\,
&\ll_{F,S,D} 
\Big(\|\xv\|^{n(1-c(F)-\epsilon )}\cdot \|\xv\|^{-(n-dq_D(F))}\Big)^{1/q_D(F)}
\\
&\ll_{F,S,D} \|\xv\|^{-n\epsilon/q_D(F)}.
\end{align*}
By the $p$-adic Subspace Theorem, the points $\xv\in\zmprim\cap D$ with
\eqref{eq5.6} lie in finitely many proper linear subspaces of $D$. By applying
the induction hypothesis with each of these subspaces, we infer that for
the points $\xv\in\zmprim\cap D$ with \eqref{eq5.6} we have 
$[F(\xv )]_S\ll_{F,S,D} |F(\xv )|^{c(F)+\epsilon}$. 
This completes our induction step, and hence the proof of (i).

(ii). Let $K=\Qq (\theta )$. By  
Chebotarev's Density Theorem there are infinitely many primes $p$
such that the minimal polynomial of $\theta$ over $\Qq$ has all its roots
in $\Qq_p$. Take such a prime $p$. Then in the factorization \eqref{eq1.0}
we may assume that the linear forms in $\LL_F$ have their
coefficients in $\Qq_p$. Let $D$ be a linear subspace of $\Qq^m$
of dimension $d\geq 2$, and $\MM$ a subset of $\LL_F$ with $\rankD\MM=:d'<d$
for which $q_D(\MM )\cdot d/n=c(F)$. 
Choose a subset $\MM'$ of $\MM$ of cardinality $d'$ that is linearly
independent over $D$.
By Lemma \ref{lem5.4} there is for 
every sufficiently large $Q$ a non-zero point $\xv\in\Zz^m\cap D$ such that
\[
\|\xv\|\ll Q,\ \ |\ell (\xv )|_p\ll Q^{-d/d'}\ \ \mbox{for } \ell\in\MM',
\]
where here and below, the constants implies by $\ll$ depend on $F,D$ and $p$ and in fact
only on $F$ and $p$ since $D$ depends on $F$.
Without loss of generality, we may assume that the greatest common divisor
of the coordinates of $\xv$ does not contain factors coprime with $p$.
Let $p^k$ be the greatest common divisor of the coordinates of $\xv$
and put $\xv':=p^{-k}\xv$
Then $\xv'\in\zmprim\cap D$, $p^k\ll Q$, and
\begin{equation}\label{eq5.7}
\begin{array}{rl}
\|\xv'\|&\ll\, p^{-k}Q,
\\[0.1cm]
|\ell (\xv')|_p&\ll\, p^kQ^{-d/d'}=(p^{-k}Q)^{-d/d'}(p^k)^{1-(d/d')}
\ \ \mbox{for } \ell\in\MM' .
\end{array}
\end{equation}
Now if we let $Q\to\infty$, then $\xv'$ runs through an infinite set.
Indeed,
otherwise there were a non-zero $\xv'\in\zmprim\cap D$
such that \eqref{eq5.7} holds for arbitrarily large $Q$.
But by letting $Q\to\infty$, we can make $\max (p^{-k}Q,p^k)$ 
arbitrarily large and thus $|\ell (\xv ')|_p$ arbitrarily small
for every $\ell\in\MM'$. But then it would follow
that $\ell (\xv')=0$ for $\ell\in\MM'$, which is however excluded by
assumption \eqref{eq1.0d}.

From the above we conclude that there are infinitely many $\xv'\in\zmprim\cap D$
such that
\[
|\ell (\xv')|_p\ll \|\xv'\|^{-d/d'}\ \ \mbox{for } \ell\in\MM'.
\]
Since the other linear forms in $\MM$ are linear combinations modulo $D^*$
of the linear forms in $\MM'$ , these $\xv'$ satisfy
\[
|\ell (\xv')|_p\ll \|\xv'\|^{-d/d'}\ \ \mbox{for } \ell\in\MM ,
\]  
and moreover, trivially, $|\ell (\xv')|_p\ll 1$ for $\ell\in\LL_F\setminus\MM$.
Using the decomposition \eqref{eq1.0}, it follows that these $\xv'$ 
satisfy
\[
|F(\xv')|_p\ll \|\xv'\|^{-(d/d')|\MM |} =\|\xv'\|^{-dq_D(\MM )}=\|\xv'\|^{-nc (F)},
\]
hence
\[
[F(\xv')]_{\{p\}}=|F(\xv')|_p^{-1}\gg |F(\xv')|^{c(F)}.
\]
This proves (ii).

(iii) Let $0<\epsilon <1$ and $B>1$. Then every $\xv\in\zmprim$ with 
$[F(\xv )]_S\geq |F(\xv )|^{\epsilon}$ and $\|\xv\|\leq B$ satisfies
\[
\prod_{p\in\Si} |F(\xv )|_p=\frac{|F(\xv )|}{[F(\xv )]_S}\leq |F(\xv )|^{1-\epsilon}
\ll_{F,\epsilon} B^{n(1-\epsilon )},
\]
where $n:=\deg F$. Hence $N(F,S,\epsilon ,B)$ is at most the number of solutions 
in $\xv\in\zmprim$ of this last inequality.
Now Proposition \ref{prop5.5} implies 
\[ 
N(F,S,\epsilon ,B)\ll_{F,S,\epsilon} (B^{n(1-\epsilon )})^{m/n}\ll_{F,S,\epsilon} B^{m(1-\epsilon )}
\]
as $B\to\infty$. This proves (iii).
\end{proof}

\section{Proof of Theorem \ref{thm1}}\label{section6}
Theorem \ref{thm1} will be deduced from Proposition \ref{prop:1} below, which is a special case of a non-explicit version of Theorem 3 of Gy\H ory and Yu \cite{gyyu}. Its proof is based on effective results of Gy\H ory and Yu \cite{gyyu} for unit equations, and ultimately depends on Baker's method, more precisely on explicit estimates of Matveev \cite{matv}
concerning linear forms in 
complex 
logarithms of algebraic numbers
and similar such estimates by Yu \cite{yu} for $p$-adic logarithms.

Let $F\in\Zz [X_1\kdots X_m]$ be a decomposable form, 
$S=\{ p_1\kdots p_s\}$ a finite non-empty set of primes, and $b$ 
a non-zero integer. Let 
$\Z_S :=\Z[(p_1\cdots p_s)^{-1}]$ 
be the ring of $S$-integers in $\Q$, and consider the equation
\begin{equation}\label{eq:5}
F(\vec{x})=b\hspace{3mm}\textrm{in}\hspace{3mm}\vec{x}\in\Z^m_S.
\end{equation}
Let $\mathfrak{p}_1,\ldots,\mathfrak{p}_t$ be the prime ideals in $K$ 
that divide $p_1,\ldots,p_s$, and let $P'=\displaystyle{\max_{1\le i\le t}} N(\mathfrak{p}_i)$,
where $N(\fa ):=\# O_K/\fa$ denotes the absolute norm of a non-zero
ideal $\fa$ of $O_K$.
Further, denote by $h$ the absolute logarithmic height.

\begin{prop}\label{prop:1}
Let $F$ be a decomposable form as above with properties \eqref{eq:2a} and 
\eqref{eq:2b}.
With the above notation, every solution $\vec{x}=(x_1,\ldots,x_m)\in\Z_S^m$ of \eqref{eq:5} with $x_m\ne 0$ if $k>1$ satisfies
\begin{equation}
\begin{aligned}\label{eq:6}
\max_{1\le j\le m} h(x_j)&<c_4^t(P'/\log P')\prod_{i=1}^t \log N(\mathfrak{p}_i)\cdot\\
&\qquad\qquad\cdot(c_5+\log N(\mathfrak{p}_1 \cdots\mathfrak{p}_t)+h(b)),
\end{aligned}
\end{equation}
where $c_4$, $c_5$ are effectively computable positive numbers that depend only on $F$.
\end{prop}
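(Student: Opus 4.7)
The plan is to translate the decomposable form equation \eqref{eq:5} into a system of $S$-unit equations over the splitting field $K$ and then invoke the effective theorem of Gy\H ory and Yu \cite{gyyu}, which itself rests on Matveev's complex and Yu's $p$-adic estimates for linear forms in logarithms. Write $F=c\prod_{\ell\in\LL_F}\ell^{e(\ell)}$ as in \eqref{eq1.0}, let $\fp_1,\ldots,\fp_t$ be the prime ideals of $O_K$ lying above $p_1,\ldots,p_s$, and let $O_{K,S}$ be the corresponding ring of $S$-integers of $K$. For any $\vec{x}\in\Z_S^m$ with $F(\vec{x})=b$ the values $\ell(\vec{x})$ lie in $O_{K,S}$ and satisfy $c\prod_{\ell\in\LL_F}\ell(\vec{x})^{e(\ell)}=b$, so each $\ell(\vec{x})$ divides $b/c$ in $O_{K,S}$ up to $S$-units of $K$.

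Next I would exploit condition \eqref{eq:2b}. Each edge $\{\ell,\ell'\}$ of $\GG(\LL_F)$ comes from a relation $\lambda\ell+\lambda'\ell'+\lambda''\ell''=0$ with $\lambda,\lambda',\lambda''\in K^*$ and $\ell''\in\LL_F$. Evaluating at $\vec{x}$ and dividing by $-\lambda''\ell''(\vec{x})$ yields
$$
\frac{\lambda\,\ell(\vec{x})}{-\lambda''\,\ell''(\vec{x})}+\frac{\lambda'\,\ell'(\vec{x})}{-\lambda''\,\ell''(\vec{x})}=1,
$$
a three-term $S$-unit equation over $K$ in which both summands are ratios of elements of $O_{K,S}$. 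Applying the Gy\H ory--Yu theorem to this equation controls the heights of those two ratios by an expression precisely of the shape on the right-hand side of \eqref{eq:6}. Traversing the edges of a fixed connected component $\LL_i$ of $\GG(\LL_F)$ step by step, one obtains a bound on $h(\ell(\vec{x})/\ell_{0,i}(\vec{x}))$ for every $\ell\in\LL_i$, where $\ell_{0,i}\in\LL_i$ is a chosen base; the number of edges traversed depends only on $F$ and is absorbed into the constant $c_4$.

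To pass from ratios to absolute heights I would distinguish $k=1$ and $k>1$. If $k=1$, the product identity $c\prod_\ell\ell(\vec{x})^{e(\ell)}=b$ combined with the ratio bounds determines $h(\ell_{0,1}(\vec{x}))$, hence $h(\ell(\vec{x}))$ for every $\ell$, and introduces $h(b)$ exactly as in \eqref{eq:6}. If $k>1$, condition \eqref{eq:2b} provides for every $i$ an identity $X_m=\sum_{\ell\in\LL_i}\mu_{\ell,i}\,\ell$ with $\mu_{\ell,i}\in K$; evaluating at $\vec{x}$ expresses $x_m$ as a $K$-linear combination of the $\ell(\vec{x})$, $\ell\in\LL_i$, so, since $x_m\ne0$, the base height $h(\ell_{0,i}(\vec{x}))$ is anchored to $h(x_m)$ up to the ratio bounds already obtained, and the product identity then recovers $h(x_m)$ in terms of $h(b)$. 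Finally, assumption \eqref{eq:2a} supplies $m$ linearly independent forms in $\LL_F$, through which each coordinate $X_j$ is a $K$-linear combination; this transfers the bound on the $h(\ell(\vec{x}))$ to $h(x_j)$ up to $F$-dependent constants.

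The main obstacle is tracing the explicit $S$-dependence $c_4^t(P'/\log P')\prod_i\log N(\fp_i)$: this dependence is inherited directly from a single application of Gy\H ory--Yu at one edge, and the point of the reduction above is to verify that propagation through $\GG(\LL_F)$ and the transfer back to $\vec{x}$ introduce only an $F$-depending, bounded number of such applications, so that the shape of the bound survives intact and the lower-order contributions are swallowed by $c_4$ and $c_5$.
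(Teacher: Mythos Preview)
Your sketch is correct in outline, but note that the paper does not actually prove this proposition: it simply records that Proposition~\ref{prop:1} is a special case of Theorem~3 of Gy\H{o}ry--Yu \cite{gyyu} (with the remark that that theorem even gives explicit expressions for $c_4,c_5$), and then moves on to deduce Theorem~\ref{thm1} from it. So the paper's ``proof'' is a one-line citation.

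What you have written is, in effect, a reconstruction of the proof of Theorem~3 of \cite{gyyu} itself: reduce the decomposable form equation to two-term equations $u+v=1$ along the edges of $\GG(\LL_F)$, apply the Gy\H{o}ry--Yu $S$-unit bound to each edge, propagate through connected components, anchor the base heights via the product relation (case $k=1$) or via the expression of $X_m$ through each $\LL_i$ (case $k>1$, using $x_m\ne0$), and finally invert via \eqref{eq:2a}. This is indeed the architecture of the Gy\H{o}ry--Yu argument, and the $S$-dependence $c_4^t(P'/\log P')\prod_i\log N(\fp_i)$ is inherited from a single $S$-unit application, exactly as you say.

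One point to tighten if you pursue this route: the ratios $\lambda\ell(\vec{x})/(-\lambda''\ell''(\vec{x}))$ are not literally $S$-units of $K$, since the $\ell(\vec{x})$ are only $S$-integers whose product over $\LL_F$ equals $b/c$; outside $S$ each $\ell(\vec{x})$ has ideal dividing a fixed ideal depending on $b$ and $F$. The Gy\H{o}ry--Yu $S$-unit theorem is stated in a form that accommodates this (the unknowns lie in a fixed finitely generated group times elements of bounded height governed by $h(b)$), and this is precisely where the additive term $h(b)$ in \eqref{eq:6} enters. You have the right mechanism, but the phrase ``a three-term $S$-unit equation'' slightly overstates it.
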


We mention that Theorem 3 of \cite{gyyu} implies Proposition \ref{prop:1} with explicit
expressions for $c_4$, $c_5$ in terms of the heights of the coefficients of $F$
and the degree and regulator of the splitting field $K$ of $F$.  

We now prove Theorem \ref{thm1} by means of Proposition \ref{prop:1}.

\begin{proof}[Proof of Theorem \ref{thm1}]
Let $\xv\in\zmprim$ with $F(\xv )\not= 0$, and put $b:=F(\xv )/[F(\xv )]_S$.
Then $F(\xv )=p_1^{a_1}\cdots p_s^{a_s}b$ for certain non-negative integers $a_1\kdots a_s$.
We can write $a_i=na_i'+a_i''$ with $a_i'$, $a_i''\in\Z_{\ge 0}$ such that $a_i''<n$ for each $i$. Then \eqref{eq:1} implies that
\begin{equation}
F(\vec{x}')=b',\label{eq:7}
\end{equation}
where
\begin{equation}\label{eq:8}
\vec{x}'=\vec{x}/p_1^{a_1'}\cdots p_s^{a_s'}\hspace{3mm}\textrm{and}\hspace{3mm} b'=bp_1^{a_1''}\cdots p_s^{a''_s}.
\end{equation}
Here $\vec{x}' = (x'_1, \ldots , x'_m)$ is a solution of \eqref{eq:7} in $\Z_S^m$.

We apply now Proposition \ref{prop:1} to the equation \eqref{eq:7}. Let $\mathfrak{p}_1,\ldots,\mathfrak{p}_t$ and $P'$ as above. Then we get
$$\max_{1\le j\le m} h(x'_j) < C_1,$$
for every solution $\vec{x}=(x_1',\ldots,x_m')\in\Z_S^m$ of \eqref{eq:7} 
with $x_m'\ne 0$ if $k>1$, 
where $C_1$ denotes the upper bound occuring in \eqref{eq:6} but with $b$ replaced by $b'$.

Since $t\le sd$, $P'\le P^d$ where $d=[K:\Qq ]$, and $h(b')\le ns\log P+\log|b|$, 
 we infer that 
\begin{equation}
\max_{1\le j\le m} h(x_j') < C_2(c_6+\log|b|),\label{eq:9}
\end{equation}
where $C_2=c_7^s(P (\log p_1) \cdots (\log p_s))^d$ and $c_6$, $c_7$ are effectively computable positive numbers that depend only on $F$. It is easy to deduce from \eqref{eq:9} and \eqref{eq:8} that
$$
p_1^{a_1'}\cdots p_s^{a_s'}\le C_3|b|^{mC_2},
$$
where $C_3=\e^{mC_2c_6}$. This gives
$$p_1^{a_1}\cdots p_s^{a_s}<(p_1\cdots p_s)^n(p_1^{a_1'}\cdots p_s^{a_s'})^n\le C_4|b|^{mnC_2}$$
with $C_4=P^{sn}C_3^n$. Multiplying both sides by $(p_1^{a_1}\cdots p_s^{a_s})^{mnC_2}$
and then raising to the power $1/(mnC_2+1)$, 
we infer that
$$[F(\vec{x})]_S\le (P^sC_3)^{\frac{1}{mC_2}}|F(\vec{x})|^{1-\frac{1}{mnC_2+1}}.$$
But $(P^sC_3)^{\frac{1}{mC_2}}\le \kappa_6$,
while $mnC_2+1\leq c_3^s(P (\log p_1) \cdots (\log p_s))^d$ with
effectively computable $\kappa_6 ,c_3$ depending only on $F$. 
This gives \eqref{eq:3}.
\end{proof}

\section{Lower bound for the greatest prime factors of decomposable forms at integral points}\label{section7}

We now deduce over $\Z$ an improved and more explicit version of Corollary 5 of Gy\H ory and Yu \cite{gyyu} on the greatest prime factors of decomposable forms at integral points. We note that in Gy\H ory and Yu \cite{gyyu} it was more complicated to deduce Corollary 5 from Theorem 3 of that paper. The next corollary gives some useful information about those non-zero integers that can be represented by decomposable forms of the above type.

For a positive integer $a$ we denote by $P(a)$ and $\omega(a)$ the greatest prime factor and the number of distinct prime factors of $a$ with the convention that $P(1)=1$, $\omega(1)=0$. Further, we denote by $\log_i$ the $i$-th iterated logarithm.

\begin{cor}\label{cor2}
	Let $F(X_1\kdots X_m)\in\Z[X_1,\ldots,X_m]$ be a decomposable form as in Theorem \ref{thm1}, and let $F_0$ be a non-zero integer that can be represented by $F(\vec{x})$ with some $\vec{x}=(x_1\kdots x_m)\in\zmprim$ with $x_m\not= 0$ if $k>1$. Then
	\begin{equation}\label{eq:10}
	(P(\log P)^{2\omega})^d>\log |F_0|
	\end{equation}
	and
	\begin{equation}\label{eq:11}
	P>\begin{cases}
	(\log |F_0|)^{1/3d}\textrm{ if }\omega\le \log P/\log_2 P,\\
	C_5\log_2 |F_0| \cdot \log_3 |F_0|/\log_4 |F_0| \textrm{ otherwise,}
	\end{cases}
	\end{equation}
	provided that $|F_0|\ge C_6$, where $P=P(F_0)$, $\omega=\omega(F_0)$. Here $C_5$, $C_6$ are effectively computable positive numbers that depend only on $F$.
\end{cor}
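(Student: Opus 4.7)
The plan is to apply Theorem \ref{thm1} with $S$ equal to the set of all prime divisors of $F_0$, so that $s=\omega$, the largest element of $S$ is $P=P(F_0)$, and $[F_0]_S=|F_0|$. The conclusion of Theorem \ref{thm1} then reads $|F_0|^{\kappa_5}\le \kappa_6$; inverting the formula for $\kappa_5$ and using $\log p_i\le \log P$ for all $i$ yields
\[
\log|F_0|\ \le\ (\log\kappa_6)\,c_3^{\,\omega}\,P^d(\log P)^{\omega d}.
\]
To deduce \eqref{eq:10} I absorb $(\log\kappa_6)c_3^\omega$ into a further factor of $(\log P)^{\omega d}$, which amounts to checking that $(\log P)^{\omega d}>(\log\kappa_6)c_3^\omega$. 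If this failed, then $\log P\le(\log\kappa_6)^{1/(\omega d)}c_3^{1/d}$ would be bounded in terms of $F$; combined with $\omega\le\pi(P)$ this would force $|F_0|$ itself to be bounded in terms of $F$ by the displayed inequality. Thus choosing $C_6$ larger than this $F$-dependent bound secures \eqref{eq:10}.

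For \eqref{eq:11} I split on the stated hypothesis on $\omega$. If $\omega\le\log P/\log_2 P$, then $(\log P)^\omega\le e^{\omega\log_2 P}\le e^{\log P}=P$, so $(P(\log P)^{2\omega})^d\le P^{3d}$ and \eqref{eq:10} yields $P>(\log|F_0|)^{1/(3d)}$. In the complementary range $\omega>\log P/\log_2 P$ I take logarithms of \eqref{eq:10} to get $d\log P+2\omega d\log_2 P>\log_2|F_0|$; the case hypothesis makes the second term dominate the first, so $\omega\log_2 P\gg_F\log_2|F_0|$, and combining with the Chebyshev estimate $\omega\le\pi(P)\ll P/\log P$ yields
\[
P\cdot\frac{\log_2 P}{\log P}\ \gg_F\ \log_2|F_0|.
\]

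The main obstacle is now to invert this last inequality into the sharper bound $P>C_5\log_2|F_0|\cdot\log_3|F_0|/\log_4|F_0|$. I plan to do this by setting $x_0:=C\log_2|F_0|\cdot\log_3|F_0|/\log_4|F_0|$ for a small constant $C>0$. A short iterated-logarithm computation, using $\log\log_2|F_0|=\log_3|F_0|$ and $\log\log_3|F_0|=\log_4|F_0|$, gives $\log x_0=(1+o(1))\log_3|F_0|$ and $\log_2 x_0=(1+o(1))\log_4|F_0|$ as $|F_0|\to\infty$, so that $x_0\log_2 x_0/\log x_0=(1+o(1))C\log_2|F_0|$. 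Since $y\mapsto y\log_2 y/\log y$ is increasing for $y$ large, choosing $C$ smaller than the implicit constant in the displayed inequality forces $P>x_0$ once $|F_0|\ge C_6$, which is exactly the second bound of \eqref{eq:11}.
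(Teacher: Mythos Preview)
Your proof is correct and follows essentially the same route as the paper's own argument: apply Theorem \ref{thm1} with $S$ the full set of prime divisors of $F_0$ so that $[F_0]_S=|F_0|$, invert $|F_0|^{\kappa_5}\le\kappa_6$, absorb the constants $c_3^{\omega}$ and $\log\kappa_6$ into an extra factor $(\log P)^{\omega d}$ by forcing $P$ large via a suitable $C_6$, and then split on $\omega\le\log P/\log_2 P$ versus its complement. The paper handles the complementary range by inserting the Chebyshev bound $\omega<2P/\log P$ directly into the logarithm of \eqref{eq:10} to get $\log_2|F_0|<d\log P+4d(P/\log P)\log_2 P$ and then asserts the inversion; you carry out that inversion explicitly by testing the trial value $x_0=C\log_2|F_0|\cdot\log_3|F_0|/\log_4|F_0|$ in the increasing function $y\mapsto y\log_2 y/\log y$, which is a welcome clarification but not a different idea.
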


\begin{proof}
	Let $F_0$ be a non-zero integer such that $F_0=F(\vec{x})$ for some $\vec{x}=(x_1,\ldots,x_m)\in\zmprim$ with $x_m\ne 0$ if $k>1$. Write
	$$F(\vec{x})=p_1^{a_1}\cdots p_s^{a_s}$$
	with distinct primes $p_1,\ldots,p_s$. Then $P=P(F_0)=\displaystyle{\max_{1\le i\le s} p_i}$ and $\omega=\omega(F_0)=s$. Put $S:=\{p_1,\ldots,p_s\}$. 
	In this case $[F(\vec{x})]_S=|F(\vec{x})|$. Now \eqref{eq:3} immediately gives
	$$|F_0|\le \kappa_6  |F_0|^{1-\kappa_5}$$
	with $\kappa_5$, $\kappa_6$ specified in Theorem \ref{thm1}. This implies that
	$$|F_0|\le \kappa_6^{1/\kappa_5},$$
	whence
	$$\log |F_0|\le c_8^s(2 P(\log P)^s)^d$$
	with an effectively computable positive $c_8$ that depends only on $F$.
	
	We know from prime number theory that $s<\medfrac{2P}{\log P}$. Hence, if $|F_0|\ge C_7$ with a large and effectively computable $C_7=C_7(F)>0$, then $P$ must be also large and so $(c_8(\log P)^d)^s\le(\log P)^{2ds}$ and \eqref{eq:10} follows.
	
	If $s\le\medfrac{\log P}{\log_2 P}$ then it follows from \eqref{eq:10} that
	$$\log_2 |F_0|<d\log P+2ds\log_2 P\le 3d\log P,$$
	which gives the first inequality in \eqref{eq:11}, provided that $C_7$ is sufficiently large.
	Otherwise, we deduce from \eqref{eq:10} that 
	$$\log_2 |F_0|<d\log P+4 d \frac{P}{\log P} \log_2 P,$$
	which gives the second inequality in \eqref{eq:11}, provided that $C_7$ is sufficiently large.
\end{proof}

\section{Applications to discriminants of algebraic integers}\label{section8} 

As was mentioned above, Theorem \ref{thm1} and its corollaries can be applied to  discriminant forms, index forms and a large class of norm forms. We now present some applications to discriminants of algebraic integers. Similar consequences can be obtained for indices of algebraic integers.

Let $L$ be a number field of degree $n\ge 3$ with ring of integers $O_L$, and suppose that $K$ is the normal closure of $L$ over $\Q$.
Further, let $S=\{ p_1\kdots p_s\}$ be a finite, non-empty set
of primes. We define the discriminant of an algebraic integer to be the 
discriminant of its monic minimal polynomial over $\Zz$.
Consider the \emph{discriminant equation}
\begin{equation}\label{eq:12}
D_{L/\Qq}(\alpha)=p_1^{a_1}\cdots p_s^{a_s}\cdot b\textrm{ in } \alpha\in O_L,
\ \ a_1\kdots a_s\in\Zz_{\geq 0},
\end{equation}
where $b$ is an $S$-free integer, i.e.,  coprime with $p_1\kdots p_s$.
Clearly, $\alpha$ and $\alpha+a$ with $a\in\Z$ have the same discriminant. Such elements of $O_L$ are called \textit{equivalent}. 
Denote by $\mathscr{S}$ the set of positive integers 
composed of primes from $S$.
We claim that any solution of \eqref{eq:12} can be derived from one which is not equivalent to any element of $O_L$ that is divisible in $O_L$
by 
any 
$\eta>1$ from $\mathscr{S}$. Indeed, if $\alpha$ satisfies \eqref{eq:12} then, by Theorem 3 of Gy\H ory \cite{gy3}, $\alpha$ can be written in the form
$$\alpha=\eta\alpha'+a$$
with some $a\in\Z$, $\eta\in\mathscr{S}$ and $\alpha'\in O_L$. This representation is not necessarily unique. For fixed $\alpha$, choose $\eta$, $\alpha'$, $a$ such that $\eta$ is maximal. Since $D_{L/\Qq}(\alpha)=\eta^{n(n-1)}D_{L/\Qq}(\alpha')$, $\alpha'$ is also a solution of $\eqref{eq:12}$ with other $a_1,\ldots,a_s$. Further, by the choice of $\eta$, the number $\alpha'$ cannot be equivalent to any $\eta'\alpha''$ in $O_L$ with $\alpha''\in O_L$ and $\eta'\in\mathscr{S}$ with $\eta'>1$, since otherwise $\alpha$ would be equivalent to $\eta\eta'\alpha''$ with $\eta\eta'>\eta$. This proves our claim.

Note that in the representation \eqref{eq:12}, the $S$-part of the discriminant of
$\alpha$ is 
$$[D_{L/\Qq}(\alpha)]_S=p_1^{a_1}\cdots p_s^{a_s}.
$$
As a consequence of Theorem \ref{thm1}, we want to estimate $[D_{L/\Qq}(\alpha)]_S$ from above in terms of $|D_{L/\Qq}(\alpha)|^{1-\kappa_7}$ for some constant $\kappa_7>0$.
In view of the above we require that $\alpha$ not be equivalent to any element of the form $\eta\alpha'$  where $\alpha'\in O_L$ and $\eta$ is an integer from $\mathscr{S}$ with $\eta>1$.

\begin{cor}\label{cor4}
	Assume that $\alpha$ in \eqref{eq:12} is not equivalent to any element of $O_L$ that is divisible in $O_L$ by an $\eta\in\mathscr{S}$ greater than $1$. Then
	\begin{equation}\label{eq:13}
	[D_{L/\Qq}(\alpha)]_S\le \kappa_8\cdot  |D_{L/\Qq}(\alpha )|^{1-\kappa_7},
	\end{equation}
	where
	$$\kappa_7= 
\bigl(c_9^s \bigl((P (\log p_1) \cdots (\log p_s) \bigr)^d \bigr)^{-1} \ge 
(c_9^s (2 P(\log P)^s)^d)^{-1} ,$$
	and $\kappa_8$, $c_9$ are effectively computable positive numbers depending only on $L$.
\end{cor}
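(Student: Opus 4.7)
The plan is to realize $D_{L/\Qq}(\alpha)$ as the value at an integer point of a decomposable form meeting the structural hypotheses \eqref{eq:2a}, \eqref{eq:2b}, and then to apply Theorem \ref{thm1} directly. This is exactly the situation Theorem \ref{thm1} was designed for.

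First I would fix an integral basis $\{1,\omega_2,\ldots,\omega_n\}$ of $O_L$ with $\omega_n$ a primitive element of $L/\Q$. Writing $\alpha=x_1+x_2\omega_2+\cdots+x_n\omega_n$ with $x_i\in\Z$, the translation invariance of $D_{L/\Qq}$ under $\alpha\mapsto\alpha+a$ for $a\in\Z$ gives $D_{L/\Qq}(\alpha)=D_F(x_2,\ldots,x_n)$, where $D_F\in\Z[X_2,\ldots,X_n]$ is the associated discriminant form, namely
\[
D_F(X_2,\ldots,X_n)=\prod_{i<j}\Bigl(\sum_{l=2}^{n}(\omega_l^{(i)}-\omega_l^{(j)})X_l\Bigr)^2 .
\]
This is a decomposable form of degree $n(n-1)$ in $m=n-1$ variables whose splitting field is contained in $K$. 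As the authors note just after \eqref{eq:2b}, discriminant forms satisfy both \eqref{eq:2a} and \eqref{eq:2b}; and a unimodular change of integral basis can always be used to arrange $x_m\ne 0$ in the case $k>1$, as long as $\alpha\notin\Q$ (which is forced by $D_{L/\Qq}(\alpha)\ne 0$).

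Next I would strip off the $S$-coprime common factor of $(x_2,\ldots,x_n)$. Let $g=\gcd(x_2,\ldots,x_n)$ and $\xv'=(x_2/g,\ldots,x_n/g)\in\Z^{n-1}_{\mathrm{prim}}$. Setting $\beta=(x_2/g)\omega_2+\cdots+(x_n/g)\omega_n\in O_L$ gives $\alpha-x_1=g\beta$, so $\alpha$ is equivalent to $g\beta$. The hypothesis that $\alpha$ is not equivalent to $\eta\alpha'$ with $\alpha'\in O_L$ and $\eta\in\mathscr{S}$, $\eta>1$, precisely forbids any prime of $S$ from dividing $g$, hence $\gcd(g,p_1\cdots p_s)=1$. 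By homogeneity $D_{L/\Qq}(\alpha)=g^{n(n-1)}D_F(\xv')$, so $[D_{L/\Qq}(\alpha)]_S=[D_F(\xv')]_S$ and $|D_{L/\Qq}(\alpha)|\ge|D_F(\xv')|$.

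Finally, applying Theorem \ref{thm1} to $D_F$ at the primitive point $\xv'$ yields $[D_F(\xv')]_S\le\kappa_6|D_F(\xv')|^{1-\kappa_5}$, with $\kappa_5$, $\kappa_6$, $c_3$ depending only on $D_F$, hence only on $L$. Substituting produces \eqref{eq:13} with $\kappa_7=\kappa_5$, $\kappa_8=\kappa_6$ and $c_9=c_3$. The main technical nuisance I expect is the coordinate bookkeeping in the case $k>1$: guaranteeing $x_m\ne 0$ after the reduction to a primitive vector may force a preliminary $\GL_{n-1}(\Z)$-change of integral basis whose effect on $D_F$ (and on the constants depending on $F$) must be absorbed into the $L$-dependent constants. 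The structural facts \eqref{eq:2a}, \eqref{eq:2b} for discriminant forms are known, and the algebraic reduction exploiting the equivalence hypothesis is short, so the proof overall is brief.
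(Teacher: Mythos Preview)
Your proposal is correct and follows essentially the same route as the paper: pass to the discriminant form in an integral basis, use the equivalence hypothesis to ensure the coordinate vector has no prime of $S$ in its gcd, reduce to a primitive point, and apply Theorem~\ref{thm1}. One simplification you missed: the paper records (citing Gy\H{o}ry and Yu \cite{gyyu}) that the discriminant form satisfies \eqref{eq:2a} and \eqref{eq:2b} with $k=1$, i.e.\ its set of linear factors is triangularly connected, so the side condition $x_m\neq 0$ in Theorem~\ref{thm1} never enters and your ``technical nuisance'' about a preliminary $\GL_{n-1}(\Zz)$-change of basis disappears entirely.
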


\begin{proof}
If $L$ is effectively given in the sense of e.g., Evertse and Gy\H ory \cite[\S3.7]{evgy}, an integral basis of $O_L$ of the form
$\{1,\omega_2,\ldots,\omega_n\}$ can be effectively determined. 
Then we can write $\alpha=a+x_2\omega_2+\cdots +x_n\omega_n$ with appropriate integers $a,x_2,\ldots,x_n$.
Using the fact that $D_{L/\Qq}(\alpha)=D_{L/\Qq}(\alpha-a)$ we get
\[	
D_{L/\Qq}(\alpha )=D_{L/\Qq}(x_2\omega_2+\cdots +x_n\omega_n).
\]
By the assumption made on $\alpha$ we infer that $p_1\kdots p_s$ do not divide 
$\gcd(x_2,\ldots,x_n)$. Moreover, we may assume without loss of generality that $\gcd(x_2,\ldots,x_n)=1$. The discriminant form $D_{L/\Qq}(\omega_2X_2+\cdots+\omega_nX_n)$ satisfies 
\eqref{eq:2a} and \eqref{eq:2b} with $k=1$, see e.g. Gy\H ory and Yu \cite{gyyu},
so we can apply Theorem \ref{thm1} with this discriminant form. By observing that the dependence of the constants in Theorem \ref{thm1} 
can be replaced by a dependence on $L$, Corollary \ref{cor4} follows.
\end{proof}

Corollary \ref{cor4} has similar consequences as Theorem \ref{thm1} for arithmetical properties of non-zero integers $D_0$ that are discriminants of some $\alpha\in O_L$, but are not the discriminants of any $k\beta$ with $\beta\in O_L$ and rational integer $k>1$. Then it follows from Theorem \ref{thm1} that
$$(P(\log P)^{2\omega})^d>|D_0|$$
provided that $|D_0|\ge C(L)$, where $P=P(D_0)$, $\omega=\omega(D_0)$ and $C(L)$ is effectively computable in terms of $L$. We can get also inequalities similar to \eqref{eq:11}. We note that more general but weaker results of this type can be found in Gy\H ory \cite{gy3} and Evertse and Gy\H ory \cite{evgy}.

\section{Additional comments}\label{section-last}

Let $f(X)$ be an integer polynomial with at least two distinct roots,
and $S=\{ p_1\kdots p_s\}$ a finite set of primes. According to the result
of Gross and Vincent \cite{grovin} quoted as Theorem \ref{A} 
in
the Introduction, we have 
$$
[f(x)]_S\leq \kappa_2|f(x)|^{1-\kappa_1}\ \ 
\mbox{for every $x\in\Zz$ with $f(x)\not= 0$,}
$$
where $\kappa_1,\kappa_2$ are positive numbers, effectively computable in terms
of $f$ and $S$. As mentioned in Theorem \ref{thm1a.3}, in this estimate
we can take 
$$
\kappa_1 = \bigl(c_1^s \bigl(( (\max_i p_i) (\log p_1) \cdots (\log p_s) \bigr)^d \bigr)^{-1},
$$
where $d$ is the degree of the splitting field of $f$ 
and $c_1$ depends only on $f$. 
The factor $\max_i p_i$ comes from the use of linear forms in $p$-adic logarithms
in our argument.
If we follow instead the
proof of \cite{grovin}, by applying a result of Matveev \cite{matv} replacing
the older 
and less sharp estimate for linear forms in logarithms
due to Alan Baker that was used by Gross and Vincent, 
we would have obtained an estimate of the above type with
$$
\kappa_1 = \bigl(c_{2}^s \bigl((\log p_1) \cdots (\log p_s)  \bigr)^{c_{3}} \bigr)^{-1},
$$
where $c_{2}, c_{3}$ (as well as the other constants $c_{4}$, $c_{5},\ldots , c_8$
below) are effectively computable in terms of $f$.
Taking for $p_1, \ldots , p_s$ the first $s$ prime numbers, an easy computation using
the Prime Number Theorem shows that, for every positive $\epsilon$, 
we have
$$
P(f(x)) \ge (1 - \epsilon ) \,  \log_2 x \cdot \log_3 x/\log_4 x\, ,
$$
for $x\in\Zz$ with $f(x)\not= 0$ and $|x|$ sufficiently large in terms of $\epsilon$. 

For a positive integer $a$ we denote by $Q(a)$ its greatest square-free factor. 
Let again $x$ be an integer with $f(x)\not= 0$
and $p_1\kdots p_s$ the prime divisors of $f(x)$.
Proceeding as in \cite{grovin}, but applying a result of Matveev \cite{matv} 
instead of one of Baker, we get
$$
\log |x| \leq c_{4}^s  \bigl((\log p_1) \cdots (\log p_s)  \bigr)^{c_{5}}.
$$
Using the arithmetico-geometric inequality as in Stewart's paper \cite{ste08},
we deduce that
$$
\frac{\log \log |x|}{s} \le c_{6} \, \Bigl( 1 + \log \Bigl( \frac{\log Q(f(x))}{s} \Bigr) + 
\frac{\log_3 Q(f(x))}{s} \Bigr).
$$
We then conclude that
$$
\log Q(f(x)) \ge c_{7} \,  \log_2 |x|\cdot \log_3 |x|/\log_4 |x|.
$$
With the approach followed in the present paper, we would only get that 
$$
\log Q(f(x)) \ge c_{8} \,  \log_2 |x|, 
$$
that was already known. 

Let $F\in\Z[X_1,\ldots,X_m]$ be a decomposable 
form as in Theorem \ref{thm1}, and let $F_0$ be a non-zero integer 
that can be represented by $F(\vec{x})$ with some 
$\vec{x}=(x_1\kdots x_m)\in\zmprim$ with $x_m\not= 0$ if $k>1$. 
We are not able to prove the existence 
of effectively computable positive numbers $c_9, c_{10}$, which depend only on $F$,
such that
$$
\log Q(F_0) > c_9 \, \log_2 |F_0| \cdot \log_3 |F_0|/\log_4 |F_0|,
$$
provided that $|F_0| > c_{10}$.


\begin{thebibliography}{99}

\bibitem{BarWid14}
F. Barroero and M.Widmer,
{\it Counting Lattice Points and O-Minimal Structures},
Int. Math. Res. Not. {\bf 2014:18}, 4932--4957 (2014).

\bibitem{BeFiTr08}
M. A. Bennett, M. Filaseta, and O. Trifonov, 
{\it Yet another generalization of the Ramanujan-Nagell equation}, 
Acta Arith. {\bf 134}, 211--217 (2008).


\bibitem{BeFiTr09}
M. A. Bennett, M. Filaseta, and O. Trifonov, 
{\it On the factorization of consecutive integers}, 
J. Reine Angew. Math. {\bf 629}, 171--200 (2009).
 


\bibitem{chenru}
Z. Chen and M. Ru, 
\textit{Integer solutions to decomposable form inequalities}, J. Number Theory \textbf{115}, 58--70 (2005).

\bibitem{ev84}
J.-H. Evertse,
\textit{On sums of $S$-units and linear recurrences}, 
Compos. Math. {\bf 53}, 225--244 (1984).

\bibitem{ev95}
J.-H. Evertse,
{\it The number of solutions of decomposable form equations},
Invent. Math. {\bf 122}, 559--601 (1995).

\bibitem{evgy88}
J.-H. Evertse and K. Gy\H ory,
\textit{Finiteness criteria for decomposable form equations}, Acta Arith. {\bf 50}, 357--379 (1988).
     
\bibitem{evgy15}
J.-H. Evertse and K. Gy\H ory, \textit{Unit Equations in Diophantine Number Theory}, Cambridge University Press (2015).

\bibitem{evgy}
J.-H. Evertse and K. Gy\H ory, \textit{Discriminant Equations in Diophantine Number Theory}, Cambridge University Press (2016).

\bibitem{grovin}
S. Gross and A. Vincent, \textit{On the factorization of $f(n)$ for $f(x)$ in $\Z[x]$}, Int. J. Number Theory \textbf{9}, 
1225--1236 (2013).

\bibitem{gy1}
K. Gy\H ory, \textit{Explicit upper bounds for the solutions of some diophantine equations}, Ann. Acad. Sci. Fenn., Ser. A I, Math. \textbf{5}, 3--12 (1980).

\bibitem{gy2}
K. Gy\H ory, \textit{On the representation of integers by decomposable forms in several variables}, Publ. Math. Debrecen \textbf{28}, 89--98 (1981).

\bibitem{gy3}
K. Gy\H ory, \textit{On discriminants and indices of integers of an algebraic number field}, J. Reine Angew. Math. \textbf{324}, 114--126 (1981).

\bibitem{gypapp}
K. Gy\H ory and Z. Z. Papp, \textit{Effective estimates for the integer solutions of norm form and discriminant form equations}, Publ. Math., Debrecen \textbf{25}, 311--325 (1978).

\bibitem{gyyu}
K. Gy\H ory and K. Yu, \textit{Bounds for the solutions of $S$-unit equations and decomposable form equations}, Acta Arith. \textbf{123}, No. 1, 9--41 (2006).


\bibitem{liu15}
J. Liu, {\it On p-adic decomposable form inequalities}, Ph.D. thesis, Leiden (2015).

\bibitem{mahler33}
K. Mahler, {\it Zur Approximation algebraischer Zahlen. III. (\"{U}ber die mittlere Anzahl der Darstellungen grosser Zahlen durch bin\"{a}re Formen)},
Acta Math. {\bf 62} (1933), 91--166.

\bibitem{mahler61}
K. Mahler, {\it Lectures on Diophantine approximations. part I: $g$-adic numbers and Roth's theorem},
Univ. Notre Dame, 1961.

\bibitem{matv}
E. M. Matveev, \textit{An explicit lower bound for a homogeneous rational linear form in the logarithms of algebraic numbers. II.},  Izv. Ross. Akad. Nauk, Ser. Mat. \textbf{64} No. 6 125-180 (2000) (Russian), translation: Izv. Math. \textbf{64}, No. 6, 1217--1269 (2000).



\bibitem{vdpschl82}
A.J. van der Poorten and H.P. Schlickewei,
{\it The growth condition for recurrence sequences}, Macquarie University Math. Rep. 82-0041 (1982).

\bibitem{vdpschl91}
A.J. van der Poorten and H.P. Schlickewei,
{\it Additive relations in fields},
J. Austral. Math. Soc. (Ser. A) {\bf 51}, 154--170 (1991).

\bibitem{schl77a}
H.P. Schlickewei, 
{\it The $\fp$-adic Thue-Siegel-Roth-Schmidt Theorem},
Arch. Math. {\bf 29}, 267--270 (1977).

\bibitem{schl77}
H.P. Schlickewei,
{\it On norm form equations},
J. Number Theory {\bf 9}, 370--380 (1977).
  	
\bibitem{stewart}
C.L. Stewart, \textit{On the number of solutions of polynomial congruences and Thue equations}, J. Amer. Math. Soc. \textbf{4}, 
793--835 (1991).
	
\bibitem{ste08} 
C.L. Stewart,
{\it On the greatest square-free factor of terms of a linear recurrence sequence}. 
In: Diophantine equations, 257--264, 
Tata Inst. Fund. Res. Stud. Math., 20, Tata Inst. Fund. Res., Mumbai, 2008.

\bibitem{thunder01}
J.L. Thunder,
{\it Decomposable form inequalities},
Ann. of Math. {\bf 153}, 767--804 (2001).

\bibitem{yu}
K. Yu, \textit{$p$-adic logarithmic forms and group varieties. III.}, Forum Math. \textbf{19}, No. 2, 187--280 (2007).
\end{thebibliography}
\end{document}